\title{Lambek-Grishin Calculus: Focusing, Display and Full Polarization}
\titlerunning{Display, Focusing and Full Polarization}
\author{Giuseppe Greco}
    {Utrecht University, The Netherlands}
    {}
    {https://orcid.org/0000-0002-4845-3821}
    {NWO grant under the scope of the project ``A composition calculus for vector-based semantic modelling with a localization for Dutch'' (360-89-070).}
\author{Valentin D. Richard}
    {\'Ecole Normale Sup\'erieure Paris-Saclay, France}
    {}
    {https://orcid.org/0000-0002-9236-8106}
    {UPS grant \textit{Bourse de mobilit\'e internationale de stage de l’Universit\'e Paris-Salcay}.}
\author{Michael Moortgat}
    {Utrecht University, The Netherlands}
    {}
    {}
    {}
\author{Apostolos Tzimoulis}
    {Vrije Universiteit, The Netherlands}
    {}
    {}
    {}
\authorrunning{G. Greco, V.\,D. Richard, M. Moortgat, and A. Tzimoulis}
\keywords{Lambek-Grishin calculus, Multi-type display calculi, Focused sequent calculi, Polarized logics, Heterogeneous algebras, Weakening relations, Semantics of proofs
}
\newtheorem{notation}[theorem]{Notation}
\newenvironment{pic}[1]{\hspace*{\fill}
\begin{tikzpicture}[
x=.22ex, y=.2ex, semithick, scale=1, every node/.style={scale=1},
inner sep=0.3ex, outer sep=0pt, minimum size=0ex, label distance=-0.2ex]
\tikzset{pro/.style={postaction=decorate,decoration={name=markings,
    mark=at position 0.5 with {
    \draw[-] (+0,+5\pgflinewidth) to (+0,-5\pgflinewidth);
    }}}}
\tikzset{outer sep=2pt}
#1
}{\end{tikzpicture}\hspace*{\fill}}
\def\fCenter{\vdash}
\def\mc{\multicolumn}
\newcommand{\sep}{\ \vert\ }
\def\fns{\footnotesize}
\newcommand{\fDLG}{\textbf{fD.LG}\xspace}
\newcommand{\fLG}{\textbf{f.LG}\xspace}
\newcommand{\LG}{\mathbb{LG}}
\newcommand{\FPLG}{\mathbb{FP.LG}}
\renewcommand{\LG}{\mathbb{LG}}
\newcommand{\ite}[4]{%
\left \{%
\begin{array}{ll}%
     #1 & \text{#2}\\%
     #3 & \text{#4}%
\end{array}%
\right.%
}
\newcommand{\graycell}{\cellcolor{gray!25}}
\newcommand{\focus}[1]{\fbox{$#1$}}
\newcommand{\FtoM}[1]{\llceil #1 \rrceil}
\newcommand{\Ftom}[1]{\llfloor #1 \rrfloor}
\newcommand{\TtoF}[1]{\lfloor #1 \rfloor}
\newcommand{\FtoT}[2][\rho]{\lceil #2 \rceil#1}
\newcommand\val[1]{{\lbrack\!\lbrack} {#1}{\rbrack\!\rbrack}}
\def\fel{\leftharpoonup}
\def\fer{\rightharpoonup}
\def\fil{\leftharpoondown}
\def\fir{\rightharpoondown}
\newcommand{\buar}{\textcolor{blue}{{\upharpoonleft}}}
\newcommand{\BUARR}{\textcolor{blue}{\hat{\upharpoonleft}}}
\newcommand{\sbuar}{\textcolor{blue}{\uparrow}}
\newcommand{\SBUARR}{\textcolor{blue}{\hat{\uparrow}}}
\newcommand{\rdar}{\textcolor{red}{\downharpoonright}}
\newcommand{\RDARR}{\textcolor{red}{\check{\downharpoonright}}}
\newcommand{\srdar}{\textcolor{red}{\downarrow}}
\newcommand{\SRDARR}{\textcolor{red}{\check{\downarrow}}}
\newcommand{\mand}{\otimes}
\newcommand{\mdrarr}{\varobslash}
\newcommand{\mdlarr}{\varoslash}
\newcommand{\mor}{\oplus}
\newcommand{\mrarr}{\,\backslash\,}
\newcommand{\mlarr}{\,\slash\,}
\newcommand{\mandr}{\otimes_r}
\newcommand{\mdrarrr}{\varobslash_r}
\newcommand{\mdlarrr}{\varoslash_r}
\newcommand{\morr}{\oplus_r}
\newcommand{\mrarrr}{\,\backslash_r\,}
\newcommand{\mlarrr}{\,\slash_r\,}
\newcommand{\mandl}{\otimes_\ell}
\newcommand{\mdrarrl}{\varobslash_\ell}
\newcommand{\mdlarrl}{\varoslash_\ell}
\newcommand{\morl}{\oplus_\ell}
\newcommand{\mrarrl}{\,\backslash_\ell\,}
\newcommand{\mlarrl}{\,\slash_\ell\,}
\newcommand{\MAND}{\,\hat{\otimes}\,}
\newcommand{\MRARR}{\,\check{\backslash}\,}
\newcommand{\MLARR}{\,\check{\slash}\,}
\newcommand{\MOR}{\,\check{\oplus}\,}
\newcommand{\MDRARR}{\,\hat{\varobslash}\,}
\newcommand{\MDLARR}{\,\hat{\varoslash}\,}
\newcommand{\MANDL}{\,\hat{\otimes}_\ell\,}
\newcommand{\MRARRL}{\,\check{\backslash}_\ell\,}
\newcommand{\MLARRL}{\,\check{\slash}_\ell\,}
\newcommand{\MORL}{\,\check{\oplus}_\ell\,}
\newcommand{\MDRARRL}{\,\hat{\varobslash}_\ell\,}
\newcommand{\MDLARRL}{\,\hat{\varoslash}_\ell\,}
\newcommand{\MANDR}{\,\hat{\otimes}_r\,}
\newcommand{\MRARRR}{\,\check{\backslash}_r\,}
\newcommand{\MLARRR}{\,\check{\slash}_r\,}
\newcommand{\MORR}{\,\check{\oplus}_r\,}
\newcommand{\MDRARRR}{\,\hat{\varobslash}_r\,}
\newcommand{\MDLARRR}{\,\hat{\varoslash}_r\,}
\newcommand{\likes}{\texttt{likes}}
\newcommand{\every}{\texttt{every}}
\newcommand{\everyone}{\texttt{everyone}}
\newcommand{\one}{\texttt{one}}
\newcommand{\some}{\texttt{some}}
\newcommand{\teacher}{\texttt{teacher}}
\newcommand{\GP}{\mathring{P}}
\newcommand{\GQ}{\mathring{Q}}
\newcommand{\GR}{\mathring{R}}
\newcommand{\GL}{\mathring{L}}
\newcommand{\GM}{\mathring{M}}
\newcommand{\GN}{\mathring{N}}
\newcommand{\GX}{\mathring{X}}
\newcommand{\GY}{\mathring{Y}}
\newcommand{\GZ}{\mathring{Z}}
\newcommand{\GG}{\mathring{\Gamma}}
\newcommand{\GD}{\mathring{\Delta} \xspace}
\newcommand{\GSi}{\mathring{\Sigma}}
\newcommand{\SP}{\dot{P}}
\newcommand{\SQ}{\dot{Q}}
\newcommand{\SM}{\dot{M}}
\newcommand{\SN}{\dot{N}}
\newcommand{\SX}{\dot{X}}
\newcommand{\SY}{\dot{Y}}
\newcommand{\SG}{\dot{\Gamma}}
\newcommand{\SD}{\dot{\Delta}}
\newcommand{\A}{\mathcal{A}}
\newcommand{\B}{\mathcal{B}}
\newcommand{\C}{\mathcal{C}}
\newcommand{\F}{\mathcal{F}}
\newcommand{\G}{\mathcal{G}}
\renewcommand{\S}{\mathcal{S}}
\newcommand{\T}{\mathcal{T}}
\newcommand{\PF}{^{+}_{\textsf{F}}}
\newcommand{\NF}{^{-}_{\textsf{F}}}
\newcommand{\PS}{^{+}_{\textsf{S}}}
\newcommand{\NS}{^{-}_{\textsf{S}}}
\newcommand{\bbA}{\mathbb{A}}
\newcommand{\bbG}{\mathbb{G}}
\newcommand{\N}{\textcolor{blue}{\mathbb{N}}}
\newcommand{\sN}{\textcolor{blue}{\dot{\mathbb{N}}}}
\newcommand{\gN}{\textcolor{blue}{\mathring{\mathbb{N}}}}
\renewcommand{\P}{\textcolor{red}{\mathbb{P}}}
\newcommand{\sP}{\textcolor{red}{\dot{\mathbb{P}}}}
\newcommand{\gP}{\textcolor{red}{\mathring{\mathbb{P}}}}
\newcommand{\Pos}{\mathsf{Pos}}
\newcommand{\Neg}{\mathsf{Neg}}
\newcommand{\Pure}{\mathsf{Pure}}
\newcommand{\Shifted}{\mathsf{Shifted}}
\newcommand{\sort}{\mathsf{sort}}
\newcommand{\Sort}{\mathsf{Sort}}
\newcommand{\sortpst}{\mathsf{sort \text{\textendash} pst}}
\newcommand{\pst}{\mathsf{pst}}
\newcommand{\Pst}{\mathsf{Pst}}
\newcommand{\ar}{\mathsf{ar}}
\renewcommand{\Form}{\mathsf{Fm}}
\newcommand{\Str}{\mathsf{Str}}
\renewcommand{\t}[1][]{\ t#1 \ }
\newcommand{\nrvd}{\textcolor{red}{\vdash}}
\newcommand{\rvd}{\ \,\textcolor{red}{\vdash}\ \,}
\newcommand{\nrvvd}{\textcolor{red}{\dot{\Vdash}}}
\newcommand{\rvvd}{\ \,\textcolor{red}{\dot{\Vdash}}\ \,}
\newcommand{\urvvd}{\ \,\textcolor{red}{\text{\d{$\Vdash$}}}\ \,} 
\newcommand{\nrvvvd}{\textcolor{red}{\Vvdash}}
\newcommand{\rvvvd}{\ \,\textcolor{red}{\Vvdash}\ \,}
\newcommand{\nrvD}{\textcolor{red}{\mathring{\vdash}}}
\newcommand{\rvD}{\ \,\textcolor{red}{\mathring{\vdash}}\ \,}
\newcommand{\nbvd}{\textcolor{blue}{\vdash}}
\newcommand{\bvd}{\ \, \textcolor{blue}{\vdash}\ \,}
\newcommand{\nbvvd}{\textcolor{blue}{\text{\d{$\Vdash$}}}}
\newcommand{\bvvd}{\ \,\textcolor{blue}{\text{\d{$\Vdash$}}}\ \,}
\newcommand{\ubvvd}{\ \,\textcolor{blue}{\dot{\Vdash}}\ \,}
\newcommand{\nbvvvd}{\textcolor{blue}{\Vvdash}}
\newcommand{\bvvvd}{\ \,\textcolor{blue}{\Vvdash}\ \,}
\newcommand{\nbvD}{\textcolor{blue}{\mathring{\vdash}}}
\newcommand{\bvD}{\ \, \textcolor{blue}{\mathring{\vdash}}\ \,}
\newcommand{\nvd}{\textcolor{black}{\vdash}}
\newcommand{\vd}{\ \,\textcolor{black}{\vdash}\ \,}
\newcommand{\nvvd}{\textcolor{black}{\text{\d{$\Vdash$}}}} 
\newcommand{\tvvd}{\ \,\textcolor{black}{\text{\d{$\Vdash$}}}\ \,} 
\newcommand{\vvdn}{\textcolor{black}{\dot{\Vdash}}}
\newcommand{\vvdt}{\ \,\textcolor{black}{\dot{\Vdash}}\ \,}
\newcommand{\nvvvd}{\textcolor{black}{\Vvdash}}
\newcommand{\vvvd}{\ \,\textcolor{black}{\Vvdash}\ \,}
\newcommand{\nvD}{\textcolor{black}{\mathring{\vdash}}}
\newcommand{\vD}{\ \,\textcolor{black}{\mathring{\vdash}}\ \,}
\newcommand{\proto}{{\relbar\joinrel\mapstochar\joinrel\rightarrow}}
\newcommand{\wrel}{\preccurlyeq}
\newcommand*{\leqqq}{\mathrel{\vcenter{\offinterlineskip\hbox{\raisebox{0.16em}{$\:\scriptstyle\mkern-1.3mu<$}}\vskip-.30ex\hbox{$\equiv$}}}}
\newcommand{\eqql}{\raisebox{\depth}{\scalebox{1}[-1]{$\preceqq$}}}
\newcommand{\arvd}{\,\textcolor{red}{\leq}\,}
\newcommand{\hrvvd}{\,\textcolor{red}{\eqql}\,}
\newcommand{\arvvvd}{\,\textcolor{red}{\leqqq}\,}
\newcommand{\arvD}{\,\textcolor{red}{\mathring{\leq}}\,}
\newcommand{\abvd}{\,\textcolor{blue}{\leq}\,}
\newcommand{\hbvvd}{\,\textcolor{blue}{\preceqq}\,}
\newcommand{\abvvvd}{\,\textcolor{blue}{\leqqq}\,}
\newcommand{\abvD}{\,\textcolor{blue}{\mathring{\leq}}\,}
\newcommand{\hvd}{\preceq}
\newcommand{\hvvd}{\preceqq}
\newcommand{\vvdh}{\,\eqql\,}
\newcommand{\hvD}{\,\mathring{\hvd}\,}
\newcommand{\Pa}{^{\partial}}
\def\felfir{\leftrightharpoons}
\def\ferfil{\rightleftharpoons}
\begin{document}
\maketitle
\begin{abstract}
\emph{Focused sequent calculi} are a refinement of sequent calculi, where additional side-conditions on the applicability of inference rules force the implementation of a proof search strategy. Focused cut-free proofs exhibit a special normal form that is used for defining identity of sequent calculi proofs. We introduce a novel focused display calculus \fDLG and a fully polarized algebraic semantics $\FPLG$ for Lambek-Grishin logic by generalizing the theory of \emph{multi-type calculi} and their algebraic semantics with \emph{heterogenous consequence relations}. The calculus \fDLG has \emph{strong focalization} and it is \emph{sound and complete} w.r.t.~$\FPLG$. This completeness result is in a sense stronger than completeness with respect to standard polarized algebraic semantics (see e.g.~the phase semantics of Bastenhof for Lambek-Grishin logic or Hamano and Takemura for linear logic), insofar we do not need to quotient over proofs with consecutive applications of shifts over the same formula. We plan to investigate the connections, if any, between this completeness result and the notion of \emph{full completeness} introduced by Abramsky et al. We also show a number of additional results. \fDLG is sound and complete w.r.t.~LG-algebras: this amounts to a semantic proof of the so-called \emph{completeness of focusing}, given that the standard (display) sequent calculus for Lambek-Grishin logic is complete w.r.t.~LG-algebras. \fDLG and the focused calculus \fLG of Moortgat and Moot are equivalent with respect to proofs, indeed there is an effective translation from \fLG-derivations to \fDLG-derivations and vice versa: this provides the link with operational semantics, given that every \fLG-derivation is in a Curry-Howard correspondence with a directional $\overline\lambda\mu\widetilde{\mu}$-term.  

\end{abstract}

{\renewcommand{\L}{\mathcal{L}}

\section{Introduction}
\label{sec:introduction}

The problem of the identity of proofs is a fundamental one. It has been actively investigated in philosophy and  mathematics (when do two proofs correspond to the same argument?), and in computer sciences (when do two algorithms correspond to the same program?). A logic can be presented by different formalisms. Sequent calculi often exhibit syntactically different proofs of the very same end-sequent. Some of these proofs differ from each other by trivial permutations of inference rules. Other formalisms, like natural deduction calculi or proof nets, are less sensitive to inference rule permutations and are usually taken as benchmarks for defining identity of proofs. \emph{Focused sequent calculi} \cite{And92,Andreoli:2001,Mil04} make use of syntactic restrictions on the applicability of inference rules achieving three main goals: (i) the proof search space is considerably reduced without losing completeness, (ii) every cut-free proof comes in a special normal form, 
(iii) leading to a criterion for defining identity of sequent calculi proofs. Being able to identify or tell apart two proofs has far-reaching consequences. In particular, in the tradition of parsing-as-deduction \cite{lambek1958mathematics,lam61}, various logical systems -- and notably various extensions of the Lambek calculus -- have been proposed to recognise not only whether sentences are syntactically well-formed, but also to capture different semantic readings by `genuinely different' proofs in the type logic \cite{Bernardi--Moortgat:2007,Moortgat--Moot:2011}.

In this paper, we focus on the minimal Lambek-Grishin logic and we provide a novel algebraic and proof-theoretic analysis of the focused Lambek-Grishin calculus \cite{Moortgat--Moot:2011}. More in general, this analysis leads to the identification of a new class of display calculi and their natural algebraic semantics. The gist of the analysis is to generalise (and refine) \emph{multi-type display calculi} and \emph{heterogeneous algebras} (\cite{birkhoff1970heterogeneous}) admitting not only heterogeneous operators, but also \emph{heterogeneous consequence relations}, now interpreted as \emph{weakening relations} \cite{Kurz--Moshier--Jung:2019} (i.e.~a natural generalisation of partial orders). Here, we introduce a specific instance of this class tailored for the signature of the Lambek-Grishin logic and we plan to provide the full picture as future work. In particular, we plan to show that if a calculus belongs to this class, then it enjoys cut-elimination, aiming at generalizing the cut-elimination meta-theorem in the tradition of display calculi (see \cite{Wansing:2002}). Moreover, we conjecture that any \emph{displayable logic} (\cite{GMPTZ}) can be equivalently presented as an instance of this class. The next paragraph summarises the main features of this analysis in general terms, without special reference to Lambek-Grishin logic. 

In the case of focused sequent calculi, the distinction between \emph{positive} versus \emph{negative} formulas is the key ingredient for organising proofs in so-called \emph{phases}. The distinction is proof-theoretically relevant in that it reflects a fundamental distinction between logical introduction rules: the left introduction rules for positive connectives are \emph{invertible} where the right introduction rules are \emph{non-invertible} in general, and vice versa for negative connectives. We observe that this distinction is also semantically grounded, indeed positive formulas (in the original language of the logic) are left adjoints and negative formulas (in the original language of the logic) are right adjoints. Proofs in \emph{focalized normal form} (see \cite{Moortgat--Moot:2011}) are cut-free proofs organised in three phases: two focused phases (either positive or negative) and non-focused phases (also called neutral phases). A focused positive (resp.~negative) phase in a derivation is a proof-section (see definition \ref{def:proof-section}) where a formula is decomposed as much as possible only by means of non-invertible logical rules for positive (resp.~negative) connectives. This formula and all its immediate subformulas in this proof-section are said `in focus'. All the other rules are applied only in non-focused phases. So, each derivable sequent has at most one formula in focus. Moreover, the interaction between two focused phases is always mediated by a non-focused phase.

So-called \emph{shift operators} -- usually denoted as $\uparrow$ and $\downarrow$ (\cite{Hamano--Scott:2007,Hamano--Takemura:2010,Bastenhof:2011}) -- are often considered to polarize a focused sequent calculus, i.e.~as a tool to control the interplay between positive and negative formulas and the interaction between phases. Shifts are adjoint unary operators that change the polarity of a formula, where $\uparrow$ goes from positive to negative, $\downarrow$ goes from negative to positive, and $\uparrow \,\dashv\, \downarrow$. In this paper, we consider positive and negative formulas as formulas of different sorts.\footnote{Note in the literature on multi-type display calculi `types' is used instead of `sorts'.} We also distinguish between positive (resp.~negative) \emph{pure} formulas and positive (resp.~negative) \emph{shifted} formulas, i.e.~formulas under the scope of a shift operator, hence we call it a \emph{full} polarization. So, we end up in considering four different sorts, each of which is interpreted in a different sub-algebra. Therefore, in this setting shifts are heterogeneous operators, where $\uparrow$ gets split into $\sbuar$ (from positive pure formulas into negative shifted formulas) and $\buar$ (from positive shifted formulas into negative pure formulas), $\downarrow$ gets split into $\srdar$ (from negative pure formulas into positive shifted formulas) and $\rdar$ (from negative shifted formulas into positive pure formulas), $\sbuar \dashv \rdar$ and $\buar \dashv \srdar$. Moreover, the composition of two shifts is still either a closure or an interior operator (by adjunction), but we do not assume that it is an identity. 

The present investigation was largely inspired by the work of Samson Abramsky, specifically from his contributions on the Geometry of Interactions program (see \cite{Abramsky--Jagadeesan:1994a}) and his categorical and game-theoretic perspective on the semantics of linear logic. In particular, in \cite{Abramsky--Jagadeesan:1994b} Abramsky and Jagadeesan introduce a game-theoretic semantics for the multiplicative fragment of linear logic expanded with the so-called MIX rule, and show a strong form of completeness they call \emph{full completeness}. This notion is inherently stronger than standard completeness (that is with respect to provability), indeed it requires ``a semantic characterization of the space of proofs of a given logic''. More precisely, given a logic \textbf{L} and the appropriate categorical model $\mathcal{M}$ where formulas are interpreted as object and proofs as morphisms, $\mathcal{M}$ is fully complete for $\mathcal{L}$ if every morphism $\val{\pi}\,:\,\val{A} \longrightarrow \val{B}$ in $\mathcal{M}$ is the denotation of some proof $\pi$ of $A \fCenter B$ in \textbf{L}. In \cite{Abramsky--Mellies:1999} Abramsky and Melliès extend the full completeness result to the multiplicative-additive fragment of linear logic with respect to a new concurrent form of games semantics. 

First of all, let us emphasize the striking differences between the present approach and the two papers cited above. Leaving aside that in this paper we focus on the Lambek-Grishin logic, notice that in \cite{Abramsky--Jagadeesan:1994b} and \cite{Abramsky--Mellies:1999} the identity of proofs is addressed via proof nets, where here we work with focused sequent calculi. We do not consider this an essential difference from a conceptual point of view. Most importantly, the notion of full completeness makes sense, strictly speaking, only for a categorical semantics. What we do here is to take seriously the idea that different semantics, even algebraic semantics, could be more tightly or less tightly connected to a logic, in the sense that they reflect more closely or less closely the structure of proofs. Moreover, the notion of weakening relation (see section \ref{sec:WeakeningRelations}), the key ingredient for defining fully polarized algebras, has a natural categorical presentation (see \cite{Kurz--Moshier--Jung:2019}) and we plan to provide a categorical presentation of the present approach in future work, where the notion of full completeness can be rigorously defined.       

The paper is structured as follows. In section \ref{sec:semantics} we introduce fully polarized LG-algebras $\FPLG$. 
In section \ref{sec:calculus} we introduce the focused display calculus \fDLG for the minimal Lambek-Grishin logic and we prove that it has the strong focalization property. In section \ref{sec:completeness-of-focusing}, we show that the calculus \fDLG is sound and complete w.r.t.~$\FPLG$ and LG-algebras, and in section \ref{ap:heterogeneous-dc} that \fDLG has canonical cut-elimination. Section \ref{sec:ProofTranslations} provides the effective translation between derivations of the calculi \fDLG and \fLG.

\section{Algebraic semantics}
\label{sec:semantics}

In this section we first recall the definition of Lambek-Grishin algebras, weakening relations and their properties needed in what follows. Then, we define fully polarized LG-algebras. 

\subsection{Preliminaries}
\label{sec:WeakeningRelations}

\paragraph*{Lambek-Grishin algebras}

The basic Lambek-Grishin logic \textbf{LG} \cite{Moortgat:2009} is the pure logic of residuation in the signature that expands the (non-unital, non-associative) Lambek calculus \cite{lam61} with the so-called Grishin connectives (i.e.~a co-tensor $\mor$ and its residuals $\mdrarr, \mdlarr$). \textbf{LG} is complete w.r.t.~Lambek-Grishin algebras defined below.

\begin{definition}
\label{def:LGAlgebra}
A basic Lambek-Grishin algebra $\bbG = (G, \leq, \mand, \mor, \mrarr, \mdrarr, \mlarr, \mdlarr)$ is a partially ordered algebra endowed with six binary operations compatible with the order $\leq$.
Moreover, the following residuation laws hold:
\begin{equation}\label{eq:adj-lg}
    B \leq A \mrarr C ~~~~\text{iff}~~~~
    A \mand B \leq C ~~~~\text{iff}~~~~
    A \leq C \mlarr B ~~~~~~~~
    C \mdlarr B \leq A ~~~~\text{iff}~~~~
    C \leq A \mor B ~~~~\text{iff}~~~~
    A \mdrarr C \leq B
\end{equation}
\end{definition}

\paragraph*{Weakening relations and collages}

In this paper we use weakening relations \cite{JunKegMos99,Kurz--Moshier--Jung:2019,GalJip20,GalJip,BilKurPetVel13} to interpret the heterogeneous consequence relations of the calculus introduced in section \ref{subsec:fDLG}. Weakening relations can be viewed as the order-theoretic equivalents of profunctors \cite{Benabou:1973} (aka distributors or bimodules), which have already been considered in models of polarized logic \cite{Hamano--Scott:2007,Cockett--Seely:2007}. In particular, partial orders are weakening relations where $\A = \B$ and ${\leq_{\A}} =  {\leq_{\B}}$. We use $\wrel\, \subseteq \A \times \B$, $\wrel_{\A}^{\B}$ and $\A\, \proto\, \B$ interchangeably to denote a weakening relation with source $\A$ and target $\B$, and $\wrel_{\A}$ as an abbreviation for $\wrel_{\A}^{\A'}$. Given two relations $R$ and $S$, we use $RS$ to denote composition of relations. 

\begin{definition}
\label{def:WeakeningRelation}
A \textbf{weakening relation} is a relation $\wrel\, \subseteq \A \times \B$ on two partially ordered set $(\A, \leq_{\A})$ and $(\B, \leq_{\B})$ that is \emph{compatible with the orders} $\leq_{\A}$ and $\leq_{\B}$ in the following sense 

\begin{center}
    \AXC{$A' \leq_{\A} A$}
    \AXC{$A \wrel B$}
    \AXC{$B \leq_{\B} B'$}
    \TrinaryInfC{$A' \wrel B'$}
    \DP
\end{center}
\end{definition}


\begin{definition}
Given two weakening relations ${\wrel_{\A}} \subseteq \A \times \A'$ and ${\wrel_{\B}}  \subseteq \B \times \B'$, we say that the order-preserving functions $L : \A \to \B$ and $R: \B' \to \A'$ form a \textbf{heterogeneous adjoint pair} $L \dashv_{\wrel_{\A}}^{\wrel_{\B}} R$ if for every $A \in \A$ and $B' \in \B'$, 

\begin{equation}
\begin{pic}\tikzset{outer sep=0.05em}
\draw (-120,15) node{$L(A) \wrel_{\B} B' \text{  iff  } A \wrel_{\A} R(B')$};
\draw (0,0) node(a'){$\A$};\draw (50,0) node(b'){$\B$};
\draw (0,30) node(a){$\A'$};\draw (50,30) node(b){$\B'$};
\draw[<-] (a) edge[pro] node[left,outer sep=0.4em]{$\wrel_{\A}$} (a');
\draw[<-] (b) edge[pro] node[right,outer sep=0.4em]{$\wrel_{\B}$} (b');
\draw[<-] (a) edge[bend left] node[below]{$R$} (b);
\draw[<-] (b') edge[bend left] node[above]{$L$} (a');
\draw (25,15) node{\rotatebox{-90}{$\vdash$}};
\end{pic}
\end{equation}
\end{definition}

If $\A' = \A$, ${\wrel_{\A}} = {\leq_{\A}}$, $\B' = \B$ and ${\wrel_{\B}} = {\leq_{\B}}$, we recover the usual definition of adjunction.

\begin{proposition}\label{prop:wr}
If $L \dashv_{\wrel_{\A}}^{\wrel_{\B}} R$ is a heterogeneous adjunction, then it defines a weakening relation $\wrel\, \subseteq \A \times \B'$ by $A \wrel B'$ iff $L(A) \wrel_{\B} B'$, which is also equivalent to $A \wrel_{\A} R(B')$. We say that $\wrel$ is \textbf{the weakening relation represented by} $L \dashv_{\wrel_{\A}}^{\wrel_{\B}} R$.
\end{proposition}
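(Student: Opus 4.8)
The plan is to unwind Definition~\ref{def:WeakeningRelation}: since $\wrel$ is asserted to be a relation between the posets $(\A, \leq_\A)$ and $(\B', \leq_{\B'})$, all that has to be checked is that it is \emph{compatible with these two orders}, i.e.\ that from $A_0 \leq_\A A$, $A \wrel B'$, and $B' \leq_{\B'} B_0'$ one may infer $A_0 \wrel B_0'$.

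First I would record that the two descriptions of $\wrel$ given in the statement agree. By the defining biconditional of the heterogeneous adjoint pair $L \dashv_{\wrel_\A}^{\wrel_\B} R$, we have $L(A) \wrel_\B B'$ iff $A \wrel_\A R(B')$ for all $A \in \A$ and $B' \in \B'$; hence the relation defined by $A \wrel B'$ iff $L(A) \wrel_\B B'$ coincides with the one defined by $A \wrel_\A R(B')$, and I may use whichever side is convenient at each step.

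For compatibility I would argue from the left-hand description. Assume $A_0 \leq_\A A$, $L(A) \wrel_\B B'$, and $B' \leq_{\B'} B_0'$. Since $L$ is order-preserving, $A_0 \leq_\A A$ yields $L(A_0) \leq_\B L(A)$. Now $L(A_0) \leq_\B L(A)$, together with $L(A) \wrel_\B B'$ and $B' \leq_{\B'} B_0'$, are precisely the three premises of the order-compatibility rule for the weakening relation $\wrel_\B$, which holds by the hypothesis that $\wrel_\B$ is a weakening relation. That rule delivers $L(A_0) \wrel_\B B_0'$, i.e.\ $A_0 \wrel B_0'$, as required. Symmetrically, the same conclusion can be reached from the right-hand description, using order-preservation of $R$ and the order-compatibility of $\wrel_\A$.

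There is no genuine obstacle here: the statement is a bookkeeping verification. The only points requiring care are keeping track of which poset plays the role of source and which of target in the three-premise rule, and invoking \emph{both} the monotonicity of $L$ (resp.\ $R$) \emph{and} the order-compatibility of the auxiliary weakening relation $\wrel_\B$ (resp.\ $\wrel_\A$) within that single rule application.
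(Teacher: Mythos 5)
Your proof is correct, but it takes a slightly different route from the paper's. The paper's proof never invokes monotonicity of $L$ or $R$: starting from $A_1 \wrel B_1'$ it rewrites this as $L(A_1) \wrel_{\B} B_1'$, weakens only in the \emph{succedent} coordinate of $\wrel_{\B}$ to absorb $B_1' \leq_{\B'} B_2'$, then crosses over via the adjunction biconditional to $A_1 \wrel_{\A} R(B_2')$ and weakens only in the \emph{precedent} coordinate of $\wrel_{\A}$ to absorb $A_2 \leq_{\A} A_1$. You instead stay entirely on the $\B$-side, using order-preservation of $L$ to turn $A_0 \leq_{\A} A$ into $L(A_0) \leq_{\B} L(A)$ and then applying the full three-premise compatibility rule for $\wrel_{\B}$ in one shot; the adjunction is used only to identify the two descriptions of $\wrel$. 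Both arguments are complete. Yours is marginally more compact (one rule application instead of two), at the cost of leaning on the hypothesis that $L$ is order-preserving; the paper's version shows that the compatibility of $\wrel$ already follows from the adjunction biconditional together with the compatibility of $\wrel_{\A}$ and $\wrel_{\B}$ alone, which is a slightly sharper observation even though monotonicity of $L$ and $R$ is assumed in the definition anyway.
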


\noindent
The proof only requires unfolding of the definitions (see appendix \ref{ap:proofs}).

\begin{definition}\label{def:collage-order}
If $\wrel\, \subseteq \A \times \B$ is a weakening relation, then the relation $\leq_{\A \sqcup \B} \,:=\, \leq_{\A} \sqcup \wrel \sqcup \leq_{\B}$ defined on the disjoint union $\A \sqcup \B$ is an order. We call it the \textbf{collage order} on $\A \sqcup \B$.
\end{definition}

The collage order $(\A \sqcup \B, \leq_{\A \sqcup \B})$ corresponds to the collage \cite{Street:1980} (or cograph) of $\wrel$ seen as a profunctor. We extend the $\sqcup$ notation to weakening relations.

\begin{definition}\label{def:collage-wr}
If we are in the following situation:

\begin{pic}\tikzset{outer sep=0.05em}
\draw (0,40) node(a'){$\A'$};\draw (50,40) node(b'){$\B'$};
\draw (0,0) node(a){$\A$};\draw (50,0) node(b){$\B$};
\tikzset{outer sep=0.4em}
\draw[->] (a) edge[pro] node[left]{$\wrel_{\A}$} (a')
    edge[loop left] node[left,]{$\leq_{\A}$} (a)
    edge[pro] node[above left,]{$\hvd$} (b')
    edge[pro] node[below]{$\vvdh$} (b);
\draw[->] (b) edge[pro] node[right]{$\wrel_{\B}$} (b')
    edge[loop right] node[right]{$\leq_{\B}$} (b);
\draw[->] (a') edge[loop left] node[left]{$\leq_{\A'}$} (a')
    edge[pro] node[above]{$\hvvd$} (b');
\draw[->] (b') edge[loop right] node[right]{$\leq_{\B'}$} (b');
\draw (-19.5,3) -- (-14,0);\draw (-20.5,43) -- (-15,40);
\draw (68.5,-3) -- (63,0);\draw (69.5,37) -- (65,40);
\end{pic}

\noindent and we also have ${\wrel_{\A} \hvvd} \ \subseteq\ \hvd$ and ${\vvdh\! \wrel_{\B}} \ \subseteq\ \hvd$, then the relation $\hvD \,:=\, \hvvd \sqcup \hvd \sqcup \vvdh$ is a weakening relation on the collage orders $\A \sqcup \A'$ and $\B \sqcup \B'$, and we call it the \textbf{collage weakening relation}.
\end{definition}

\subsection{Fully polarized LG-algebra}
\label{subsec:FullyPolarizedLG}

We write $\A\Pa$ for the order dual of $(\A, \leq_{\A})$, i.e. $A \leq_{\A\Pa} A'$ iff $A' \leq_{\A} A$. We use $P,Q$ (resp.~$\SP, \SQ$) for pure (resp.~shifted) positive elements, i.e.~elements in the poset $\P$ (resp.~in $\sP$); $M, N$ (resp.~$\SM, \SN$) for pure (resp.~shifted) negative elements,  i.e.~elements in the poset $\N$ (resp.~$\sN$); $\GP, \GQ, \GR$ (resp.~$\GM, \GN, \GL$) for general positive (resp.~negative) elements, i.e.~elements in the poset $\gP$ (resp.~$\gN$). The letters $A,B,C$ are used whenever we do not need to specify the poset. 

An {\em order-type} over $n\in \mathbb{N}$ is an $n$-tuple $\epsilon\in \{1, \partial\}^n$.  
For any order type $\varepsilon$, we let $\bbA^\varepsilon: = \Pi_{i = 1}^n \bbA^{\varepsilon_i}$. 
We use $n_h \in \mathbb{N}$ to denote the arity of a connective $h$. The language $\mathcal{L}_\mathrm{\FPLG}(\mathcal{F}, \mathcal{G})$ (from now on abbreviated as $\mathcal{L}_\mathrm{\FPLG}$) takes as parameters: two disjoint denumerable sets of proposition letters $\mathsf{AtProp^+}$, elements of which are denoted $p,q$, and $\mathsf{AtProp^-}$, elements of which are denoted $m,n$, and two disjoint sets of connectives:  
\begin{equation}\label{eq:order-types}
\begin{array}{c}
    \F = \{\mand, \mandl, \mandr, 
    \mdlarr, \mdrarrl, \mdlarrr, 
    \mdrarr, \mdrarrl, \mdrarrr,
    \sbuar, \buar\} \\
    
    \G = \{\mor, \morl, \morr, 
    \mrarr, \mrarrl, \mrarrr, 
    \mlarr, \mlarrl, \mlarrr,
    \srdar, \rdar\} \\[2mm]

     \epsilon(\mand) = \epsilon(\mandl) = \epsilon(\mandr) = 
     \epsilon(\mor) = \epsilon(\morl) = \epsilon(\morr) = (1,1) \\
     
     \epsilon(\mdrarr) = \epsilon(\mdrarrl) = \epsilon(\mdrarrr) = 
     \epsilon(\mrarr) = \epsilon(\mrarrl) = \epsilon(\mrarrr) = (\partial,1) \\
     
     \epsilon(\mdlarr) = \epsilon(\mdlarrl) = \epsilon(\mdlarrr) = 
     \epsilon(\mlarr) = \epsilon(\mlarrl) = \epsilon(\mlarrr) = (1,\partial) \\
     
     \epsilon(\srdar) = \epsilon(\rdar) = \epsilon(\sbuar) = \epsilon(\buar) = (1)
\end{array}
\end{equation}

\begin{definition}\label{def:fplg}
A fully polarized LG-algebra ($\FPLG$) $\bbA$ is defined by four posets $(\P, \arvd)$, $(\sP, \arvvvd)$, $(\N, \abvd)$ and $(\sN, \abvvvd)$ together with
\begin{itemize}
    \item Two adjunctions $\sbuar \dashv \rdar$ and $\buar \dashv \srdar$
\begin{equation}\label{eq:shifts-fplg}
\begin{pic}
\draw (0,0) node(p){$\P$};\draw (50,0) node(sn){$\sN$};
\draw (100,0) node(sp){$\sP$};\draw (150,0) node(n){$\N$};
\draw[->] (p) edge[bend left] node[above]{$\sbuar$} (sn);
\draw[->] (sp) edge[bend left] node[above]{$\buar$} (n);
\draw[->] (n) edge[bend left] node[below]{$\srdar$} (sp);
\draw[->] (sn) edge[bend left] node[below]{$\rdar$} (p);
\draw (25,0) node{\rotatebox{90}{$\vdash$}};
\draw (125,0) node{\rotatebox{90}{$\vdash$}};
\end{pic}
\end{equation}
    We use $\vvdh$ for the weakening relation represented by $\sbuar \dashv \rdar$ and $\hvvd$ for the weakening relation represented by $\buar \dashv \srdar$.

    \item Three weakening relations $\hrvvd\, \subseteq \P \times \sP$, $\hvd\, \subseteq \P \times \N$ and $\hbvvd\, \subseteq \sN \times \N$ such that for all $P \in \P$ and $N \in \N$ we have
    \begin{equation}\label{eq:shift-intro-elim}
        \sbuar P \hbvvd N ~~\text{ iff }~~ P \hvd N 
        ~~\text{ iff }~~ P \hrvvd \srdar N
    \end{equation}
    i.e. $\hvd$ is the weakening relation represented by the heterogeneous adjunction $\sbuar \dashv_{\hbvvd}^{\textcolor{red}{\fns\raisebox{\depth}{\scalebox{1}[-1]{$\preceqq$}}}} \srdar$.

    We define the collage posets $(\gP, \arvD) = (\P \sqcup \sP, \arvd \sqcup \hrvvd \sqcup \arvvvd)$, $(\gN, \abvD) = (\N \sqcup \sN, \abvd \sqcup \hbvvd \sqcup \abvvvd)$ and the collage weakening relation $\hvD = \vvdh \sqcup \hvd \sqcup \hvvd \ \subseteq \gP \times \gN$, summarised in Fig.~\ref{fig:fplg-wr}.

\begin{figure}[ht!]
    \centering
\begin{pic}\tikzset{outer sep = 0.5em}
\draw (0,0) node(p){$\P$};\draw (50,0) node(sn){$\sN$};
\draw (0,50) node(sp){$\sP$};\draw (50,50) node(n){$\N$};
\draw[->] (p) edge[pro] node[left]{$\hrvvd$} (sp)
    edge[loop left] node[left]{$\arvd$} (p)
    edge[pro] node[above left]{$\hvd$} (n)
    edge[pro] node[below]{$\vvdh$} (sn);
\draw[->] (sn) edge[pro] node[right]{$\hbvvd$} (n)
    edge[loop right] node[right]{$\abvvvd$} (sn);
\draw[->] (sp) edge[pro] node[above]{$\hvvd$} (n)
    edge[loop left] node[left]{$\arvvvd$} (sp);
\draw[->] (n) edge[loop right] node[right]{$\abvd$} (n);
\draw (-20,-20) edge[dashed] (70,-20);
\draw (-23.5,3) -- (-18,0); \draw (74.5,-3) -- (69,0);
\draw (-23.5,53) -- (-18,50); \draw (74.5,47) -- (69,50);
\draw (0,-40) node(gp){$\gP$};\draw (50,-40) node(gn){$\gN$};
\draw[->] (gp) edge[pro] node[above]{$\hvD$} (gn)
    edge[loop left] node[left]{$\arvD$} (gp);
\draw[->] (gn) edge[loop right] node[right]{$\abvD$} (gn);
\draw (-23.5,-37) -- (-18,-40); \draw (74.5,-43) -- (69,-40);
\end{pic}
    \caption{\ \ Weakening relations in $\FPLG$-algebras.}
    \label{fig:fplg-wr}
\end{figure}

    \item Six operations (that we call LG-connectives)
\begin{equation}\notag
\begin{array}{rlrlrl}
     \mand:& \gP \times \gP \to \P &
     \mdlarr:& \gP \times \gN\Pa \to \P&
     \mdrarr:& \gN\Pa \times \gP \to \P\\
     
     \mor:& \gN \times \gN \to \N&
     \mrarr:& \gP\Pa \times \gN \to \N&
     \mlarr:&  \gN \times \gP\Pa \to \N
\end{array}
\end{equation}
    
    such that the following heterogeneous adjunctions hold
\begin{equation}\label{eq:adj-fplg}
\begin{array}{cc}
     \GQ \hvD \GP \mrarr \GN ~~~~\text{iff}~~~~
     \GP \mand \GQ \hvD \GN ~~~~\text{iff}~~~~
     \GP \hvD \GN \mlarr \GQ 
     
     \\
     
     \GP \mdlarr \GN \hvD \GM ~~~~\text{iff}~~~~
     \GP \hvD \GM \mor \GN ~~~~\text{iff}~~~~
     \GM \mdrarr \GP \hvD \GN
\end{array}
\end{equation}

    \item Finally, 12 operations (that we call $\ell$-variants and $r$-variants, or simply LG-variants)
\begin{equation}\notag
\begin{array}{rlrlrl}
     \mandl:& \gN \times \gP \to \sN &
     \mdlarrl:& \gN \times \gN\Pa \to \sN&
     \mdrarrl:& \gP\Pa \times \gP \to \sN\\
     
     \morl:& \gP \times \gN \to \sP&
     \mrarrl:& \gN\Pa \times \gN \to \sP&
     \mlarrl:&  \gP \times \gP\Pa \to \sP\\
     
     \mandr:& \gP \times \gN \to \sN &
     \mdlarrr:& \gP \times \gP\Pa \to \sN&
     \mdrarrr:& \gN\Pa \times \gN \to \sN\\
     
     \morr:& \gN \times \gP \to \sP&
     \mrarrr:& \gP\Pa \times \gP \to \sP&
     \mlarrr:&  \gN \times \gN\Pa \to \sP
\end{array}
\end{equation}
    
    such that the following adjunctions hold
\begin{equation}\label{eq:adj-fplg-variants}
\arraycolsep=1.5mm
\begin{array}{cccccccccc}
     \GQ \arvD \GP \mrarrr \GR &\text{iff}&
     \GP \mand \GQ \arvD \GR &\text{iff}&
     \GP \arvD \GR \mlarrl \GQ, &
     
     \GL \mdlarrl \GN \abvD \GM &\text{iff}&
     \GL \abvD \GM \mor \GN &\text{iff}&
     \GM \mdrarrr \GL \abvD \GN\\

     \GQ \arvD \GL \mrarrl \GN &\text{iff}&
     \GL \mandl \GQ \abvD \GN &\text{iff}&
     \GL \abvD \GN \mlarr \GQ, &
     
     \GP \mdlarrr \GR \abvD \GM &\text{iff}&
     \GP \arvD \GM \morr \GR &\text{iff}&
     \GM \mdrarr \GP \arvD \GR\\

     \GQ \abvD \GP \mrarr \GN &\text{iff}&
     \GP \mandr \GL \abvD \GN &\text{iff}&
     \GP \arvD \GN \mlarrr \GL, &
     
     \GP \mdlarr \GN \arvD \GR &\text{iff}&
     \GP \arvD \GR \morl \GN &\text{iff}&
     \GR \mdrarrl \GP \abvD \GN
\end{array}
\end{equation}
\end{itemize}
\end{definition}

\begin{proposition}
\label{prop:WeakeningOnCollagePoset}
In any $\FPLG$ we have ${\hrvvd \! \hvvd} \ = \ \ \hvd \ \ = \ {\vvdh \! \hbvvd}$.
\end{proposition}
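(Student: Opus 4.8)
The plan is to prove the two equalities $\hvd = \vvdh \hbvvd$ and $\hvd = \hrvvd \hvvd$ separately, each by double inclusion, exploiting three ingredients: the representation of $\vvdh$ and $\hvvd$ through the simple adjunctions $\sbuar \dashv \rdar$ and $\buar \dashv \srdar$ (Proposition \ref{prop:wr}), the defining biconditional \eqref{eq:shift-intro-elim}, and the compatibility of weakening relations with the underlying orders (Definition \ref{def:WeakeningRelation}). Since all three relations live in $\P \times \N$, the argument is pure relation-chasing, and the shift images $\sbuar P$ and $\srdar N$ will serve as the canonical witnesses for the two factorizations.

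For $\hvd = \vvdh \hbvvd$ I would first unfold $\vvdh$ via its representing adjunction, so that $P \vvdh \SN$ becomes equivalent to $\sbuar P \abvvvd \SN$. For the inclusion $\hvd \subseteq \vvdh \hbvvd$, given $P \hvd N$, equation \eqref{eq:shift-intro-elim} yields $\sbuar P \hbvvd N$; choosing the witness $\SN := \sbuar P$ and using reflexivity of $\abvvvd$ (whence $P \vvdh \sbuar P$) shows $P (\vvdh \hbvvd) N$. For the converse, given a witness $\SN$ with $P \vvdh \SN$ and $\SN \hbvvd N$, unfolding the first relation gives $\sbuar P \abvvvd \SN$; compatibility of $\hbvvd$ with $\abvvvd$ then upgrades $\SN \hbvvd N$ to $\sbuar P \hbvvd N$, which is $P \hvd N$ by \eqref{eq:shift-intro-elim}.

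The second equality $\hvd = \hrvvd \hvvd$ is entirely symmetric, swapping the roles of the two shift adjunctions. Here I would unfold $\hvvd$ via $\buar \dashv \srdar$, so that $\SP \hvvd N$ means $\SP \arvvvd \srdar N$, use the witness $\SP := \srdar N$ together with reflexivity of $\arvvvd$ for the inclusion $\hvd \subseteq \hrvvd \hvvd$, and use compatibility of $\hrvvd$ with $\arvvvd$ for the reverse inclusion, again closing the loop through the rightmost equivalence in \eqref{eq:shift-intro-elim}.

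I do not expect a serious obstacle: the proof is routine once the conventions are fixed. The only point requiring genuine care is the bookkeeping, namely keeping straight which poset each element inhabits and which order or weakening relation is being invoked, so that Proposition \ref{prop:wr} is applied in the correct direction and each reflexivity or compatibility step quotes the right order. Conceptually, the statement just records that $\hvd$ factors through either shifted sort, and the two factorizations are witnessed by the respective shift images.
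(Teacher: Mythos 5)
Your proposal is correct and follows essentially the same route as the paper's proof: unfold the representable weakening relation via Proposition \ref{prop:wr}, use the shift image ($\srdar N$ resp.\ $\sbuar P$) together with reflexivity as the witness for one inclusion, and use compatibility of the heterogeneous relation with the shifted-sort order plus \eqref{eq:shift-intro-elim} for the other. The only difference is that you spell out both equalities where the paper proves one and declares the other symmetric.
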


\begin{proof}\label{prop:wr-equalities}
We show that ${\hrvvd \! \hvvd}  = \ \hvd$. Fix $P \in \P$, $N \in \N$ and assume that $P \hrvvd \SQ$ and $\SQ \hvvd N$ for some $\SQ \in \sP$. From $\SQ \hvvd N$, we conclude that $\SQ \arvvvd \srdar N$, for $\hvvd$ is the weakening relation represented by $\buar \dashv \srdar$ (see proposition \ref{prop:wr}). From $P \hrvvd \SQ$ and $\SQ \arvvvd \srdar N$ we conclude $P \hrvvd \srdar N$, for $\hrvvd$ is a weakening relation compatible with the partial order $\arvvvd$. Therefore, $P \hvd N$ by \eqref{eq:shift-intro-elim}.

Now, fix $P \in \P$, $N \in \N$ and assume $P \hvd N$. On the one hand, $P \hrvvd \srdar N$ by \eqref{eq:shift-intro-elim}. On the other hand, $\srdar N \arvvvd \srdar N$ gives $\srdar N \hvvd N$ by proposition \ref{prop:wr} on $\hvvd$. The equality $\hvd \ = {\vvdh \! \hbvvd}$ is proven in a similar way.
\end{proof}

\begin{remark}
 In \cite{Kurz--Moshier--Jung:2019}, operations that are order-reversing in some coordinate are considered `problematic', essentially because source and target of weakening relations are, in general, of different types. Remark 2.23 in \cite{Kurz--Moshier--Jung:2019} illustrates the concern considering negation and implication in Boolean or Heyting algebras as prototypical examples. We observe that this is an issue only insofar we confine ourselves to homogeneous operations. In the present setting, the problem is overcome allowing heterogeneous operations. In the case of fully polarized LG-algebras, shifts, the adjoints of shifts, LG-connectives, and LG-variants are heterogeneous operations (see definition \ref{def:fplg}).
\end{remark}

\section{Proof theory}\label{sec:calculus}

The basic Lambek-Grishin logic can be presented as a (single-type) proper display sequent calculus (see \cite{Moortgat:2009}). Section 2.1 of \cite{Moortgat--Moot:2011} provides a display sequent calculus for the basic Lambek-Grishin logic and its expansion with Grishin's \cite{Grishin:1993} `linear distributivity' structural rules capturing interaction between the $\otimes$ and the $\oplus$ families of connectives. Section 3.1 of \cite{Moortgat--Moot:2011} provides a focused sequent calculus \fLG for the same logic. In \cite{Moortgat--Moot:2011} the calculus \fLG is considered a display calculus given that all the connectives in the language are residuated in each coordinate, even tough the so-called \emph{display postulates} capturing residuation can only be applied in neutral phases. On the contrary, all the connectives of the calculus \fDLG introduced in section \ref{subsec:fDLG} are residuated and display postulates can be applied \emph{in any phase}. Therefore, \fDLG is a display calculus accordingly to the usual definition. It is worth to mention here that \fLG and \fDLG are equivalent calculi, indeed, it is not difficult to define faithful translations from \fLG-derivations to \fDLG-derivations and vice versa (see Section \ref{sec:ProofTranslations}). 

More in general, \fDLG has the following distinctive features: (i) homogeneous as well as heterogeneous connectives are considered (multi-type), (ii) each rule is closed under uniform substitution within each type (properness), (iii) every structure occurring in a derivable sequent can be isolated either in precedent or, exclusively, in succedent position by means of display postulates (display property), and (iv) homogeneous as well as heterogeneous turnstiles are considered. Any \emph{multi-type proper display calculus} (see \cite{Wansing:2002,FriGreKurPalSik16}) has features i-iii but not iv.

\subsection{Focused display LG-calculus}
\label{subsec:fDLG}

The language of \fDLG is the Lambek-Grishin display calculus language expanded with the structural $\ell,r$-variants and shifts operators.

\begin{notation}
Following the display calculi literature \cite{GJLPT-LE-logics}, we adopt a notation where structural and operational (aka logical) connectives are in a one-to-one correspondence. Moreover, we mark the structural counterpart of a connective $\star$ as follows: $\hat{\star}$ if $\star$ is a left-adjoint or residual, $\check{\star}$ if $\star$ is a right-adjoint or residual.
\end{notation}

The Lambek-Grishin structural and operational connectives are the following

\begin{center}\tabcolsep=0.7mm
\begin{tabular}{|r|c|c|c|c|c|c|c|c|c|c|c|c|c|c|c|c|c|c|}
\hline
\mc{1}{|c|}{\scriptsize{Structural symbols}} &
\mc{1}{|c|}{$\rule{0pt}{2.25ex}\MAND$} &
$\MDLARR$ &
\mc{1}{c|}{$\MDRARR$} &
\mc{1}{|c|}{$\MOR$} &
$\MRARR$ &
\mc{1}{c|}{$\MLARR$}\\
\hline
\mc{1}{|c|}{\scriptsize{Operational symbols}} &
\mc{1}{|c|}{\rule{0pt}{2.25ex}$\mand$} &
$\mdlarr$ &
\mc{1}{c|}{$\mdrarr$} &
\mc{1}{|c|}{$\mor$} &
$\mrarr$ &
\mc{1}{c|}{$\mlarr$}\\
\hline
\end{tabular}
\end{center}

\noindent Below we list  the structural $\ell,r$-variants included in the language of \fDLG and the corresponding operational $\ell,r$-variants (in grey cells). We consider $\ell,r$-variants essentially to ensure the display property, therefore we find more convenient to not include at all the operational $\ell,r$-variants in the language of \fDLG. 
The subscript $\ell$ (resp. $r$) of a $\ell$-variant $\star_{\ell}$ (resp. $r$-variant $\star_r$) indicates that the subformula on its left is of the opposite polarity w.r.t. the corresponding LG-connective $\star$.

\begin{center}\tabcolsep=0.7mm
\begin{tabular}{|r|c|c|c|c|c|c|c|c|c|c|c|c|c|c|c|c|c|c|}
\hline
\mc{1}{|c|}{\scriptsize{Structural symbols}} &
\mc{1}{|c|}{$\rule{0pt}{2.25ex}\MANDL$} &
$\MDLARRL$ &
\mc{1}{c|}{$\MDRARRL$} &
\mc{1}{|c|}{$\MORL$} &
$\MRARRL$ &
\mc{1}{c|}{$\MLARRL$} &
\mc{1}{|c|}{$\MANDR$} &
$\MDLARR_r$ &
\mc{1}{c|}{$\MDRARRR$} &
\mc{1}{|c|}{$\MORR$} &
$\MRARR_r$ &
\mc{1}{c|}{$\MLARRR$} \\
\hline
\mc{1}{|c|}{\scriptsize{Operational symbols}} &
\mc{1}{|c|}{\rule{0pt}{2.25ex}\graycell$\mandl$} &
\graycell$\mdlarrl$ &
\mc{1}{c|}{\graycell$\mdrarrl$} &
\mc{1}{|c|}{\graycell$\morl$} &
\graycell$\mrarrl$ &
\mc{1}{c|}{\graycell$\mlarrl$} &
\mc{1}{|c|}{\graycell$\mandr$} &
\graycell$\mdlarr_r$ &
\mc{1}{c|}{\graycell$\mdrarrr$} &
\mc{1}{|c|}{\graycell$\morr$} &
\graycell$\mrarrr$ &
\mc{1}{c|}{\graycell$\mlarrr$}\\
\hline
\end{tabular}
\end{center}

\noindent Below we list the structural and operational shifts operators. We find more convenient to not include the operational adjoints of shifts (in grey cells) in the language of \fDLG

\begin{center}
\begin{tabular}{|r|c|c|c|c|c|c|c|c|}
\hline
\mc{1}{|c|}{\scriptsize{Structural symbols}} &
\mc{1}{|c|}{$\rule{0pt}{2.25ex}\RDARR$} &
\mc{1}{c|}{$\SRDARR$} & 
\mc{1}{|c|}{$\BUARR$} &
\mc{1}{c|}{$\SBUARR$} \\
\hline
\mc{1}{|c|}{\scriptsize{Operational symbols}} &
\mc{1}{|c|}{\graycell$\rdar$} &
\mc{1}{c|}{$\srdar$} & 
\mc{1}{|c|}{\graycell$\buar$} &
\mc{1}{c|}{$\sbuar$} \\
\hline
\end{tabular}
\end{center}

For any connective $h$ (either structural or operational), the arity $n_h$, order-type $\epsilon(h)$, and its classification as $\mathcal{F}$-connective or , exclusively, $\mathcal{G}$-connective are like in \ref{eq:order-types}. 

\begin{notation}
We adopt the following notational convention for formulas and structures: $\GP\in\{P,\SP\}, \GX\in\{X,\SX\}, \GN\in\{N,\SN\}, \GD\in\{\Delta,\SD\}$. For instance, accordingly to this convention, we have that $\GP \mand \GQ \in \{P \mand Q, \SP \mand Q, P \mand \SQ, \SP \mand \SQ\}$. Therefore, general formulas and structures are not a full-fledged sort, but rather an abbreviation.
\end{notation}

The calculus \textbf{fD.LG} manipulates formulas and structures defined by the following mutual recursion, where $p \in \mathsf{AtProp}^+$ and $n \in \mathsf{AtProp}^-$:
\begin{equation}
\notag
\tabcolsep=1.7mm
\begin{tabular}{r@{}rcll}
$\mathsf{PurePosFm} \ni\ $ & $P$ & $::=$ & $p \sep \GP \mand \GP \sep \GP \mdlarr \GN \sep \GN \mdrarr \GP$ & \text{Pure positive formulas} \\

$\mathsf{PureNegFm} \ni\ $ & $N$ & $::=$ & $n \sep \GN \mor \GN \sep \GP \mrarr \GN \sep \GN \mlarr \GP$ & \text{Pure negative formulas} \\

$\mathsf{ShiftedPosFm} \ni\ $ & $\SP$ & $::=$ & $\srdar\, N$ & \text{Shifted positive formulas} \\
     
$\mathsf{ShiftedNegFm} \ni\ $ & $\SN$ & $::=$ & $\sbuar P$ & \text{Shifted negative formulas} \\


$\mathsf{GenPosFm} \ni\ $ & $\GP$ & $::=$ & $P \sep\SP$ & \text{General positive formulas} \\

$\mathsf{GenNegFm} \ni\ $ & $\GN$ & $::=$ & $N \sep\SN$ & \text{General negative formulas} \\


$\mathsf{PurPosStr} \ni\ $ & $X$ & $::=$ & $P \sep\RDARR\, \SD \sep
\GX \MAND \GX \sep \GX \MDLARR \GD \sep \GD \MDRARR \GX$ & \text{Pure positive structures}\\

$\mathsf{PurNegStr} \ni\ $ & $\Delta$ & $::=$ & $N \sep\BUARR\, \SX \sep\GD \MOR \GD \sep \GX \MRARR \GD \sep\GD \MLARR \GX$ & \text{Pure negative structures} \\

$\mathsf{ShiftedPosStr} \ni\ $ & $\SX$ & $::=$ & $\SP \sep\SRDARR\, \Delta \sep \GX \MORL \GD \sep \GD \MORR \GX \sep$ & \text{Shifted positive structures} \\
& & & $\GD \MRARRL \GD \sep\GX \MRARRR \GX \sep \GX \MLARRL \GX \sep\GD \MLARRR \GD$ & \\

$\mathsf{ShiftedNegStr} \ni\ $ & $\SD$ & $::=$ & $\SN \sep\SBUARR\, X \sep\GD \MANDL \GX \sep\GX \MANDR \GD \sep$ & \text{Shifted negative structures} \\
& & & $\GD \MDLARRL \GD \sep\GX \MDLARRR \GX \sep \GX \MDRARRL \GX \sep\GD \MDRARRR \GD$ & \\


$\mathsf{GenPosStr} \ni\ $ & $\GX$ & $::=$ & $X \sep\SX$ & \text{General positive structures} \\

$\mathsf{GenNegStr} \ni\ $ & $\GD$ & $::=$ & $\Delta \sep\SD$ & \text{General negative structures} \\

\end{tabular}
\end{equation}

The well-formed sequents are the following:
\begin{equation}\label{eq:turnstiles}
\text{\begin{tabular}{|r|c|c|c|c|}\hline
\mc{1}{|c|}{\scriptsize{Positive sequents}} &
\mc{1}{|c|}{$X \rvd Y$} &
$\SX \urvvd Y$ &
\mc{1}{|c|}{$X \rvvd \SY$} &
$\SX \rvvvd \SY$ \\
\hline
\mc{1}{|c|}{\scriptsize{Negative sequents}} &
\mc{1}{|c|}{$\Delta \bvd \Gamma$} &
$\SD \bvvd \Gamma$ & 
\mc{1}{|c|}{$\Delta \ubvvd \SG$} &
$\SD \bvvvd \SG$ \\
\hline
\mc{1}{|c|}{\scriptsize{Neutral sequents}} &
\mc{1}{|c|}{$X \vd \Delta$} &
$\SX \tvvd \Delta$ &
\mc{1}{|c|}{$X \vvdt \SD$} &
$\SX \vvvd \SD$ \\
\hline
\end{tabular}
}
\end{equation}

\begin{notation}
We extend the previous conventions to sequents as follows: 
$\nrvD \in\{\nrvd, \urvvd, \nrvvd, \nrvvvd\}$, $\nbvD \in\{\nbvd, \nbvvd, \ubvvd, \nbvvvd\}$, $\nvD \in\{\nvd, \nvvd, \vvdt, \nvvvd\}$. The reading is supposed to preserve well-formedness. For instance, in a premise of a binary logical rule $\nrvD = \nrvd$ iff $\GX = X$ and $\GY = Y$, or $\nrvD = \nrvvd$ iff $\GX = X$ and $\GY = \SY$, and so on. Therefore, each binary logical rule below denotes four different rules. Nonetheless, notice that in any actual derivation the instantiation of a logical inference rule is unique and completely deterministic.
\end{notation}

The calculus \fDLG consists of the following rules.
\paragraph*{Axioms and cuts}
{\small
\begin{equation}
\label{eq:(co)axioms-and-cut-rules}
\begin{tabular}{rcl}
\AXC{$\ $}
\RL{\fns $p$-Id}
\UIC{$p \rvd  p$}
\DP
 & &
\AXC{$\ $}
\LL{\fns $n$-Id}
\UIC{$n \bvd n$}
\DP
\\
 & & \\
\AXC{$\GX \rvD \GP$}
\AXC{$\GP \rvD \GY$}
\LL{\fns P-Cut}
\BIC{$\GX \rvD \GY$}
\DP 
 & &
\AXC{$\GG \bvD \GN$}
\AXC{$\GN \bvD \GD$}
\RL{\fns N-Cut}
\BIC{$\GG \bvD \GD$}
\DP 
\\
 & & \\
\AXC{$\GX \rvD \GP$}
\AXC{$\GP \vD \GD$}
\LL{\fns Pn-Cut}
\BIC{$\GX \vD \GD$}
\DP 
 & &
\AXC{$\GX \vD \GN$}
\AXC{$\GN \bvD \GD$}
\RL{\fns nN-Cut}
\BIC{$\GX \vD \GD$}
\DP
\end{tabular}
\end{equation}
}
\paragraph*{Logical rules}
The logical rules transforming a structural connective in the premise into its logical counterpart in the conclusion are called \emph{translation rules}. All the other logical rules are called \emph{tonicity rules}. In the literature on focused calculi, `asynchronous' and `synchronous', respectively, are often used (e.g.~in \cite{Andreoli:2001}).
{\small
\begin{equation}\label{eq:operational-rules}
\begin{tabular}{rl}

\AXC{$\GP \MAND \GQ  \vD \GD$}
\LL{\fns $\mand_L$}
\UIC{$\GP \mand \GQ \vD \GD$}
\DP
 \ 
\AXC{$\GX \rvD \GP$}
\AXC{$\GY \rvD \GQ$}
\RL{\fns $\mand_R$}
\BIC{$\GX \MAND \GY \rvd \GP \mand \GQ$}
\DP

 &

\AXC{$\GN \bvD \GG$}
\AXC{$\GM \bvD \GD$}
\LL{\fns $\mor_L$}
\BIC{$\GN \mor \GM \bvd \GG \MOR \GD$}
\DP
 \ 
\AXC{$\GX \vD \GN \MOR \GM$}
\RL{\fns $\mor_R$}
\UIC{$\GX \vD \GN \mor \GM$}
\DP
 \\

 & \\

\AXC{$\GP \MDLARR \GN \vD \GD$}
\LL{\fns $\mdlarr_L$}
\UIC{$\GP \mdlarr \GN \vD \GD$}
\DP
 \ 
\AXC{$\GX \rvD  \GP$}
\AXC{$\GN \bvD \GD$}
\RL{\fns $\mdlarr_R$}
\BIC{$\GX \MDLARR \GD \rvd \GP \mdlarr \GN$}
\DP

 & 
 
\AXC{$\GX \rvD  \GP$}
\AXC{$\GN \bvD \GD$}
\LL{\fns $\mrarr_L$}
\BIC{$\GP \mrarr \GN \bvd X \MRARR \Delta$}
\DP
 \ 
\AXC{$\GX \vD \GP \MRARR \GN$}
\RL{\fns $\mrarr_R$}
\UIC{$\GX \vD \GP \mrarr \GN$}
\DP
\\

 & \\

\AXC{$\GN \MDRARR \GP \vD \GD$}
\LL{\fns $\mdrarr_L$}
\UIC{$\GN \mdrarr \GP \vD \GD$}
\DP
 \ 
\AXC{$\GN \bvD \GD$}
\AXC{$\GX \rvD  \GP$}
\RL{\fns $\mdrarr_R$}
\BIC{$\GD \MDRARR \GX \rvd \GN \mdrarr \GP$}
\DP

 & 

\AXC{$\GN \bvD \GD$}
\AXC{$\GX \rvD  \GP$}
\LL{\fns $\mlarr_L$}
\BIC{$\GN \mlarr \GP \bvd \GD \MLARR \GX$}
\DP
 \ 
\AXC{$\GX \vD \GN \MLARR \GP$}
\RL{\fns $\mlarr_R$}
\UIC{$\GX \vD \GN \mlarr \GP$}
\DP
\\

& \\

\AXC{$N  \bvd \Delta$}
\LL{\fns $\srdar_L$}
\UIC{$\srdar N \rvvvd \SRDARR\, \Delta$}
\DP
 \
\AXC{$\GX \rvD \SRDARR\, N$}
\RL{\fns $\srdar_R$}
\UIC{$\GX \rvD \srdar N$}
\DP

 & 

\AXC{$\SBUARR\, P \bvD \GD$}
\LL{\fns $\sbuar_L$}
\UIC{$\sbuar P \bvD \GD $}
\DP
 \ 
\AXC{$X  \rvd P$}
\RL{\fns $\sbuar_R$}
\UIC{$\SBUARR X  \bvvvd \sbuar P$}
\DP
\end{tabular}
\end{equation}
}
\paragraph*{Display postulates}
Below we use a double inference line to denote two rules: (i) from the premise to the conclusion and (ii) from the conclusion to the premise. We use the same name for both rules.
{\small
\begin{equation}
\label{eq:display-postulates}
\begin{tabular}{cc}
\AXC{$\GY \vD \GX \MRARR \GD$}
\LL{\fns $\hat{\mand} \dashv \check{\mrarr}$}
\doubleLine
\UIC{$\GX \MAND \GY \vD \GD$}
\LL{\fns $\hat{\mand} \dashv \check{\mlarr}$}
\doubleLine
\UIC{$\GX \vD \GD \MLARR \GY$}
\DP
 \ 
\AXC{$\GY \rvD \GX \MRARR_r \GZ$}
\RL{\fns $\hat{\mand} \dashv \check{\mrarr}_r$}
\doubleLine
\UIC{$\GX \MAND \GY \rvD \GZ$}
\RL{\fns $\hat{\mand} \dashv \check{\mrarr}_\ell$}
\doubleLine
\UIC{$\GX \rvD \GZ \MLARRL \GY$}
\DP

 & 
\AXC{$\GSi \MDLARRL \GD \bvD \GG$}
\LL{\fns $\MDLARRL \dashv \MOR$}
\doubleLine
\UIC{$\GSi \bvD \GG \MOR \GD$}
\LL{\fns $\MDRARRR \dashv \MOR$}
\doubleLine
\UIC{$\GG \MDRARRR \GSi \bvD \GD$}
\DP
 \ 
\AXC{$\GX \MDLARR \GD \vD \GG$}
\RL{\fns $\MDLARR \dashv \MOR$}
\doubleLine
\UIC{$\GX \vD \GG \MOR \GD$}
\RL{\fns $\MDRARR \dashv \MOR$}
\doubleLine
\UIC{$\GG \MDRARR \GX \vD \GD$}
\DP

 \\

 &  \\

\AXC{$\GG \MDRARR \GX \rvD \GY$}
\doubleLine
\LL{\fns $\MDRARR \dashv \MORR$}
\UIC{$\GX \rvD \GG \MORR \GY$}
\doubleLine
\LL{\fns $\MDLARRR\! \dashv \MORR$}
\UIC{$\GX \MDLARRR \GY \bvD \GG$}
\DP
 \ 
\AXC{$\GX \MDLARR \GD \rvD \GY$}
\doubleLine
\RL{\fns $\MDLARR \dashv \MORL$}
\UIC{$\GX \rvD \GY \MORL \GD$}
\doubleLine
\RL{\fns $\MDRARRL\! \dashv \MORL$}
\UIC{$\GY \MDRARRL \GX \bvD \GD$}
\DP

 & 

\AXC{$\GG \bvD \GX \MRARR \GD$}
\doubleLine
\LL{\fns $\MANDR \dashv \MRARR$}
\UIC{$\GX \MANDR \GG \bvD \GD$}
\doubleLine
\LL{\fns $\MANDR\! \dashv \MLARRR$}
\UIC{$\GX \rvD \GD \MLARRR \GG$}
\DP
 \ 
\AXC{$\GY \rvD \GG \MRARRL \GD$}
\RL{\fns $\MANDL \dashv \MRARRL$}
\doubleLine
\UIC{$\GG \MANDL \GY \bvD \GD$}
\doubleLine
\RL{\fns $\MANDL \dashv \MLARR$}
\UIC{$\GG \bvD \GD \MLARR \GY$}\DP
 
\\ 

\\

\mc{2}{c}{
\AXC{$\SBUARR\, X \bvvvd \SD$}
\doubleLine
\RL{\fns $\SBUARR \dashv \RDARR$}
\UIC{$X \rvd \RDARR\, \SD$}
\DP
 \ \ \ \ \ 

\AXC{$\SBUARR X \bvvd \Delta$}
\doubleLine
\RL{$\SBUARR \dashv \SRDARR$}
\UIC{$X \rvvd \SRDARR \Delta$}
\DP
 \ \ \ \ 

 \
\AXC{$\SX \rvvvd \SRDARR\, \Delta$}
\doubleLine
\LL{\fns $\BUARR \dashv \SRDARR$}
\UIC{$\BUARR\, \SX \bvd \Delta$}
\DP
}

\end{tabular}
\end{equation}
}
\paragraph*{Structural rules}
{\small
\begin{equation}
\begin{tabular}{cc}
\AXC{$\GX \vD  \Delta$}
\RL{\fns $\SRDARR$}
\doubleLine
\UIC{$\GX \rvD \SRDARR\, \Delta$}
\DP
 &
\AXC{$X \vD \GD$}
\LL{\fns $\SBUARR$}
\doubleLine
\UIC{$\SBUARR\, X \bvD \GD$}
\DP
\\
\end{tabular}
\end{equation}
}



\begin{proposition}
\label{prop:NonDerivableSequents}
Sequents of the form $\SX \urvvd Y$, $\Delta \ubvvd \SG$ and $\SX \vvvd \SD$ are not derivable.
\end{proposition}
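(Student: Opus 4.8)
The statement to prove (Proposition~\ref{prop:NonDerivableSequents}) asserts that three specific sequent shapes never occur as conclusions of \fDLG-derivations: $\SX \urvvd Y$ (shifted positive antecedent against pure positive succedent), $\Delta \ubvvd \SG$ (pure negative antecedent against shifted negative succedent), and $\SX \vvvd \SD$ (shifted positive antecedent against shifted negative succedent). Since these are negative/mixed statements about what $\mathit{cannot}$ be derived, the plan is to proceed by induction on the height of derivations, showing that the set of derivable sequents is contained in the set of $\mathit{admissible\ shapes}$, i.e.\ those listed in~\eqref{eq:turnstiles} $\mathit{excluding}$ the three forbidden ones.

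\smallskip

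\noindent\textbf{Approach.} First I would make precise the invariant to be maintained. Each of the twelve turnstiles in~\eqref{eq:turnstiles} is typed by the sorts of its two arguments (pure/shifted, positive/negative), and the three forbidden sequents are exactly the ones whose sort-combination is ruled out. So the claim is equivalent to: no inference rule of \fDLG\ can have a forbidden sequent as its conclusion, $\mathit{given}$ that all its premises are admissible. The proof is then a routine-but-exhaustive case analysis over the rules in \eqref{eq:(co)axioms-and-cut-rules}, \eqref{eq:operational-rules}, \eqref{eq:display-postulates}, and the structural rules.

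\smallskip

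\noindent\textbf{Key steps, in order.} The plan is to proceed as follows. First I would dispatch the base cases: the identity axioms $p \rvd p$ and $n \bvd n$ produce only the shapes $X \rvd Y$ and $\Delta \bvd \Gamma$, which are admissible. Second I would handle the cut rules; here the point is that the cut formula is general ($\GP$ or $\GN$) but the outer sorts of the conclusion are inherited from the premises, so a forbidden conclusion would force a forbidden premise, contradicting the induction hypothesis. Third I would go through the logical rules in~\eqref{eq:operational-rules}: I would observe that the translation and tonicity rules for the six LG-connectives live in the $X \rvD \cdot$ / $\Delta \bvD \cdot$ / $X \vD \cdot$ families and preserve the sort-discipline, while the four shift rules are exactly the rules that $\mathit{change}$ sorts, and I would check case-by-case that none of them outputs one of the three forbidden combinations (for instance $\sbuar_R$ produces $\SBUARR X \bvvvd \sbuar P$, of shape $\SD \bvvvd \SN$, which is admissible, not a forbidden $\Delta \ubvvd \SG$). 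Fourth I would treat the display postulates and structural rules, which are the subtle part: because these are double-line (bidirectional) and rearrange structures across the turnstile, I must verify for $\mathit{both}$ directions that an admissible premise cannot yield a forbidden conclusion. The cleanest way is to read off, for each display postulate, the turnstile-sort of premise and conclusion and confirm they are linked by the residuation pattern fixed in~\eqref{eq:order-types} and \eqref{eq:turnstiles}.

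\smallskip

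\noindent\textbf{Main obstacle.} The hard part will be the display postulates and the $\SRDARR$/$\SBUARR$ structural rules, because their bidirectionality means each must be checked in two directions and because they are precisely the rules that shuttle between the pure and shifted sorts. The potential danger is that some composite of a display postulate (which moves a shifted structure from succedent to antecedent) could manufacture a $\SX$ on the left facing a pure $Y$ on the right. I expect the resolution to be that the grammar of structures itself already forbids this: by inspection of the mutual recursion defining $\mathsf{ShiftedPosStr}$, $\mathsf{ShiftedNegStr}$, etc., together with the sort-typing of turnstiles, a shifted positive structure $\SX$ can only ever be paired (in a well-formed sequent) with a shifted-negative or via the dotted turnstiles, never with a pure positive $Y$ through $\urvvd$ used in the forbidden way. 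Thus the bulk of the argument is really a confirmation that the well-formedness conditions on sequents, propagated through every rule, are never violated toward the three excluded shapes; the only genuine care needed is bookkeeping the order-types in~\eqref{eq:order-types} to ensure no rule secretly permutes a pure and a shifted sort across the turnstile.
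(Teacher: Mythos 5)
Your proposal is correct and follows essentially the same route as the paper: induction on the derivation with an exhaustive rule-by-rule check, hinging on the observation that the grammar forces LG structural connectives to build pure structures while $\ell,r$-variants and shifts build shifted ones, so no rule (and in particular no display postulate) can manufacture a forbidden sort-combination, and a forbidden cut conclusion forces a forbidden premise. The only cosmetic difference is that the paper strengthens the induction hypothesis to speak of nested occurrences of shifted structures ``put in display,'' whereas you keep the invariant at the top level and absorb that work into the display-postulate cases; both come to the same thing.
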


\begin{proof}
By quick induction on the derivation and examination of every rule with the following induction hypothesis: ``In a sequent $S$, if $\SX$ (resp. $\SD$) occurs in $S$ in precedent (resp. succedent) position, then put in display, the succedent (resp. precedent) is either pure negative or shifted positive (resp. pure positive or shifted negative).'' It namely works because LG connectives are pure, $\ell,r$-variants are shifted and because it holds for the conclusion of rules involving shifts.
\end{proof}

Indeed, some conceivable combinations of cut rules are actually not included in the calculus (see \eqref{eq:(co)axioms-and-cut-rules}), as well as some conceivable weakening relations are not considered in the algebraic semantics (see definition \ref{def:fplg}).

\subsection{Focalization}

In this subsection we first provide a procedural description and a formal definition of \emph{strongly focused proof} of an arbitrary sequent calculus (definition \ref{def:strong-foc}, adapted from \cite[def. 3]{Laurent:2004}). Then, we show that \fDLG has strong focalization (theorem \ref{thm:cfdlg-strongly-focalizaed}). In the end we provide some nomenclature and a diagrammatic representation of the `topology of rules' of \fDLG. We use $\Psi, \Phi$ to refer to arbitrary structures.



The backward-looking proof search strategy implemented by a focused sequent calculus (see for instance \cite{Andreoli:2001}) can be roughly described as follows: (i) pick a formula, (ii) decompose the chosen formula as much as possible via applications of non-invertible logical rules, (iii) once you reach a subformula of the opposite polarity or an atom, then you may apply structural rules or invertible logical rules, (iv) repeat the process. In order to make precise this informal procedural description, we use a couple of preliminary definitions (see for instance \cite{GJLPT-LE-logics}). 
\begin{definition}[Signed generation tree]
The positive (resp. negative) generation tree of a structure $\Psi$, denoted $+\Psi$ (resp. $-\Psi$), is defined by labelling the root node of the generation tree of $\Psi$ with the sign $+$ (resp. $-$), and then propagating the labelling on each remaining node as follows:

\begin{itemize}
    \item[] For any node labelled with $h \in \F \cup \G$ of arity $n_h \geq 1$, and for any $1 \leq i \leq n_h$, assign the same (resp. the opposite) sign to its $i$-th child node if the order-type $\epsilon(h,i) = 1$ (resp. if $\epsilon(h,i) = \partial$).
\end{itemize}

The signed generation tree of a sequent $\Psi \t \Phi$ consists of the signed generation trees $+\Psi$ and $-\Phi$.
\end{definition}

\begin{definition}[Skeleton and PIA]
A node in a signed generation tree of a sequent is called skeleton if it is labelled with $+f$ for some $f \in \F$ or with $-g$ for some $g \in \G$. Otherwise, it is called a PIA node.
\end{definition}

An example of signed generation tree is given in Fig.~\ref{fig:ex-generation-tree} in appendix \ref{ap:examples}. Notice that any signed generation tree of a well-formed stucture $\Psi$ in the language of $\fDLG$ can be partitioned into skeleton vs PIA subtrees (i.e.~connected subgraphs of the signed generation tree of $\Psi$).

\begin{definition}[Transition node]\label{def:transition-node}
A transition node of a signed generation tree $\sigma$ is the uppermost node of a skeleton or PIA subtree excluding the root of $\sigma$.
\end{definition}

\begin{definition}[Proof-section]
\label{def:proof-section}
A proof-section $\pi'$ of a proof-tree $\pi$ is a connected subgraph of $\pi$, such that for every node $S \in \pi'$, if $S$ is not a leaf of $\pi'$ and it is introduced by a rule application $R$, then also the premise(s) of $R$ are in $\pi'$.
\end{definition}


\begin{definition}[Strong focalization]\label{def:strong-foc}
%
%

A sequent proof $\pi$ is \emph{strongly focalized} if cut-free and, for every formula $A$ occurring in $\pi$, every PIA subtree of $A$ is constructed by a proof-section of $\pi$ containing only tonicity rules. 

\end{definition}

\begin{proposition}\label{prop:connective-introduction-generation-tree}
Let $h$ be an operational connective occurring in the generation tree of the end-sequent $\Psi \t \Phi$ in a $\fDLG$-proof $\pi$, and let $S$ be the uppermost sequent in $\pi$ where $h$ occurs. If $h$ is a skeleton node, then it is introduced in $S$ via a translation rule. If $h$ is a PIA node, then it is introduced in $S$ via a tonicity rule.
\end{proposition}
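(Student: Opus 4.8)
The plan is to trace the designated occurrence of $h$ up to the rule that creates it, and then to read off---by a uniform inspection of the logical rules in \eqref{eq:operational-rules}---that the shape of that rule (translation versus tonicity) is forced by the skeleton/PIA status of $h$.

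First I would identify the rule $R$ whose conclusion is $S$ and show that it is a logical rule introducing $h$ as the main connective of its principal formula. Since $S$ is the uppermost sequent containing the tracked occurrence of $h$, that occurrence is absent from the premise(s) of $R$, so $R$ creates it; in particular $S$ is not an axiom, as $p$-Id and $n$-Id contain no operational connective. Moreover none of the cut rules, display postulates, or structural shift rules $\SRDARR,\SBUARR$ creates an operational connective: cuts only delete the cut formula, while the remaining rules merely rearrange structures and add structural connectives, so every operational connective in their conclusion already occurs in a premise. Hence $R$ is a logical rule; and since a logical rule introduces exactly one new operational connective, namely the main connective of its principal formula (all side structures being copied verbatim), that connective is $h$.

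Next I would fix the sign of $h$. Every rule of a proper display calculus preserves the precedent/succedent polarity of the structures it manipulates, and this polarity coincides with the sign assigned in the signed generation tree; thus the sign of the occurrence of $h$---and hence its classification as skeleton or PIA---is the same in the generation tree of $S$ as in that of the end-sequent. At $S$ the principal formula of $R$ stands in display position, i.e.~it is the entire antecedent or the entire succedent; hence $h$ is signed $+$ precisely when that formula is the antecedent and $-$ precisely when it is the succedent.

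Finally I would run the case analysis over \eqref{eq:operational-rules}. For each $\F$-connective $f$ the rule $f_L$ is a translation rule placing $f$ in precedent position (so $+f$, a skeleton node), whereas $f_R$ is a tonicity rule placing $f$ in succedent position (so $-f$, a PIA node); dually, for each $\G$-connective $g$ the rule $g_R$ is a translation rule placing $g$ in succedent position (so $-g$, skeleton), whereas $g_L$ is a tonicity rule placing $g$ in precedent position (so $+g$, PIA). The shifts follow the same pattern: $\sbuar_L$ and $\srdar_R$ are the translation rules introducing $\sbuar\in\F$ (in precedent) and $\srdar\in\G$ (in succedent) as skeleton nodes, while $\sbuar_R$ and $\srdar_L$ are the tonicity rules introducing them as PIA nodes. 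Matching this tabulation against the sign of $h$ found above gives exactly the claim. The bulk of the argument is this inspection, which is routine; the only points needing care are the polarity-preservation step and the slightly atypical shift rules, where the operational shift and its structural counterpart sit on opposite sides of the turnstile---a direct look at $\srdar_L,\srdar_R,\sbuar_L,\sbuar_R$ confirms that these respect the correspondence as well.
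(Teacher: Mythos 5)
Your proposal is correct and is essentially the paper's own argument: the paper dismisses the proposition as ``immediate by inspection of the rules of \fDLG'', and your write-up simply carries out that inspection in full (only logical rules create operational connectives, the new connective is the main one of the displayed principal formula, polarity is preserved, and the translation/tonicity split lines up with $+\F$/$-\G$ versus $-\F$/$+\G$, including the shift rules). No gaps.
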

\begin{proof}
Immediate by inspection of the rules of $\fDLG$.
\end{proof}

%
%
\begin{proposition}\label{prop:transition-node-shift}
Let $A$ be a formula occurring in a $\ell$-$r$-variant-free sequent. If a shift labels a node $\nu$ of the signed generation tree of $A$, then either $\nu$ is a transition node or it is the root of $A$.
\end{proposition}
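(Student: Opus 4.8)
The plan is to reduce the statement to a local fact about a single edge of the generation tree. Unwinding Definition~\ref{def:transition-node}, the maximal connected subtrees of $\sigma$ all of whose nodes share the same skeleton/PIA status partition $\sigma$, and a transition node is exactly the node of such a subtree that is closest to the root, other than the root of $\sigma$ itself. Equivalently, a node $\nu$ is a transition node iff it is not the root of $\sigma$ and its skeleton/PIA status differs from that of its parent. It therefore suffices to prove: if a shift labels a non-root node $\nu$ of $\sigma$, then the classification of $\nu$ is the opposite of the classification of its parent.

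First I would pin down the parent of a shift node. By the formula grammar the argument of a shift is always \emph{pure} ($\srdar N$ with $N$ pure negative, $\sbuar P$ with $P$ pure positive), so no shift node is ever the child of a shift node. Since $A$ is $\ell$-$r$-variant-free and the only unary connectives of the signature are the shifts, the parent of a non-root shift node must be one of the six binary LG-connectives $\mand,\mdlarr,\mdrarr,\mor,\mrarr,\mlarr$, with the shift occupying one of its argument positions.

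The core step is a sign--classification bookkeeping, carried out by inspecting the order-types in \eqref{eq:order-types}. I would first record the auxiliary fact that at any argument position of a binary LG-connective node $h$, a general \emph{positive} argument receives the sign $+$ iff $h$ is a skeleton node, and dually a general \emph{negative} argument receives the sign $-$ iff $h$ is a skeleton node. This is checked position-by-position: e.g.\ for $h=\mrarr\in\G$ with $\epsilon(\mrarr)=(\partial,1)$ the positive first argument flips the sign of $h$ while the negative second argument preserves it, and $h$ is skeleton precisely when its sign is $-$; the remaining five connectives are analogous, and the conclusion is independent of the sign carried by the root of $A$. Combining this with the shifts' own family membership then closes the argument: a positive shift $\srdar\in\G$ sits at a positive-argument position and is skeleton iff its sign is $-$, i.e.\ iff $h$ is \emph{not} skeleton; a negative shift $\sbuar\in\F$ sits at a negative-argument position and is skeleton iff its sign is $+$, again iff $h$ is \emph{not} skeleton. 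In either case $\nu$ and its parent $h$ have opposite classification, so $\nu$ is a transition node.

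I expect the only real friction to be the routine enumeration behind the auxiliary fact, since one must track, for each of the six connectives and each coordinate, the interaction of the coordinate's order-type with the parent's sign. The conceptual content is small: shifts change polarity, and both the polarity-carrying order-types and the assignment of the shifts to the families $\F,\G$ are arranged so that this change of polarity coincides exactly with a change of skeleton/PIA status. The variant-freeness hypothesis is used only to guarantee that the parent is one of the six pure binary connectives, rather than an $\ell$- or $r$-variant.
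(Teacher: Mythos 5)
Your proposal is correct and follows essentially the same route as the paper's (much terser) proof: both arguments rest on the observation that for the six LG-connectives the polarity of the target and of each argument position is locked to the order-type, so that skeleton/PIA status propagates unchanged through LG-connectives and is necessarily inverted exactly at a shift, whose target polarity breaks the $\F$/$\G$ pattern. Your explicit reformulation of ``transition node'' as ``non-root node whose classification differs from its parent's'' and the case-by-case check of the six connectives are just a careful unpacking of the two properties (i) and (ii) that the paper's proof states without elaboration.
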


\begin{proof}
This is due to the presence of shift operators and the polarization of the calculus: For every LG structural connective $\star \in \F$ (resp. $\star \in \G$), (i) the target sort of $\star$ is positive (resp. negative), and (ii) the source sort of the $i$-th argument of $\star$ is positive (resp. negative) iff $\epsilon(\star,i) = 1$.
\end{proof}

%
%


%
%
%
%
%

The language expansion with $\ell$-$r$-variants guarantees that \fDLG enjoys the display property. Indeed, any substructure, no matter if it occurs in a positive, negative or neutral sequent, can be isolated either in precedent or, exclusively, in succedent position. This property is desirable when it comes to prove cut-elimination or develop a general theory for a class of calculi. Nevertheless, allowing structural rules in positive or negative sequents has undesirable consequences on focalization. We argue that confining to $\ell$-$r$-variants-free proofs is harmless in the following sense:
\begin{proposition}
for every \fDLG-derivable sequent $S$ there exists an equivalent $\ell$-$r$-variants-free sequent $S'$ such that $S'$ has a $\ell$-$r$-variants-free proof. 
\end{proposition}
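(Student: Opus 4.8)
The plan rests on two observations obtained by inspecting the rules of \fDLG. First, no axiom and no logical rule of \eqref{eq:operational-rules} contains an $\ell$-$r$-variant: the translation and tonicity rules introduce only pure LG structural connectives or shifts. Hence the sole inferences that create or destroy a variant are the subscripted (variant) display postulates of \eqref{eq:display-postulates}. Second, each such postulate is invertible (double line) and either pairs a variant occurrence on one side of the turnstile with a pure LG connective on the other (for instance $\GY \rvd \GX \MRARRR \GZ$ against $\GX \MAND \GY \rvd \GZ$, or $\GX \rvd \GG \MORR \GY$ against $\GG \MDRARR \GX \rvd \GY$), or exchanges a succedent-variant for an antecedent-variant of the matching residual family. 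Thus every variant postulate, suitably oriented, converts a variant into a pure connective, possibly after one exchange step.

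For the first clause I would set up a rewriting on derivable sequents that orients the variant postulates so as to eliminate variants: a variant in succedent position is un-displayed into the corresponding pure connective, and a variant in antecedent position is un-displayed into a pure connective either directly or after first being exchanged for a succedent-variant. To expose a variant that is not yet the main connective of a side I would bring it into display position using the shift display postulates and the shift structural rules $\SRDARR, \SBUARR$, routing when possible through the neutral sequents of \eqref{eq:turnstiles}, where the pure display postulates suffice and introduce no variants; this exposure is available because \fDLG enjoys the display property. Termination I would obtain from a lexicographic measure whose first component is the total number of variant occurrences and whose second component counts antecedent-variants: the pure-producing un-displayings strictly decrease the first component, the exchange steps leave it fixed while decreasing the second, and the pure and shift manipulations used for exposure touch no variant and so leave the measure unchanged. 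Since every step is an invertible display postulate, the resulting variant-free sequent $S'$ is interderivable with $S$, written $S \dashv\vdash S'$, and is in particular derivable; Proposition \ref{prop:NonDerivableSequents} guarantees that no step ever lands in one of the non-derivable turnstile shapes.

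For the second clause I would transform a given proof of $S'$ into a variant-free one by a variant-elimination argument in the style of the cut-elimination meta-theorem for display calculi \cite{Wansing:2002}. Because axioms and logical rules are variant-free, every variant in the proof is created by a variant postulate and, $S'$ being variant-free, is annihilated by a matching postulate further down; moreover the only inferences that can act on a variant-headed structure are again display postulates, since no logical rule of \eqref{eq:operational-rules} has a variant as its principal structural connective. I would therefore permute each uppermost variant-creating postulate downward past the intervening display and shift inferences until it meets its matching destroyer, where the invertible adjoint pair cancels and the detour is replaced by one built solely from the pure postulates of \eqref{eq:display-postulates} and the shift rules. Iterating clears all variants from the proof.

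The main obstacle is this permutation-and-cancellation step, exactly as in cut-elimination: one must verify case by case that a variant-creating postulate commutes with each inference it can meet on its way down, and -- the genuinely delicate point, since derivable sequents such as $\GX \MAND \GY \rvd P$ with a variant buried inside $\GX$ do occur -- that transporting a variant-headed structure never forces it into one of the configurations excluded by Proposition \ref{prop:NonDerivableSequents}. The polarity discipline recorded in the proof of Proposition \ref{prop:transition-node-shift} (every $\F$-connective targets a positive sort and every $\G$-connective a negative sort, with the source polarity of the $i$-th argument fixed by $\epsilon$) is precisely what makes the required commutations available and ensures that each cancellation is against the intended residual, so that no fresh variants are produced; the same discipline is what controls the shift-mediated exposure used in the first clause. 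By comparison, once the lexicographic measure is in place the termination of the first-clause rewriting is comparatively routine.
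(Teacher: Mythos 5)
Your proposal rests on the same two observations as the paper's (very short) sketch: variants are created and destroyed only by the invertible double-line display postulates, and no logical rule or axiom mentions them. Where you diverge is in how much machinery you deploy to discharge the second clause. You frame it as a cut-elimination-style permutation-and-cancellation argument and flag the commutation of a variant-creating postulate past ``intervening display and shift inferences'' as the genuinely delicate point. The paper's sketch dissolves that difficulty with one further observation you state but do not fully exploit: since auxiliary formulas in tonicity rules occur \emph{in isolation} and no structural rule other than display postulates is applicable in positive or negative sequents, a variant-introducing detour is an \emph{uninterrupted} chain of invertible display postulates --- there is literally nothing to permute past. The chain produces a list of mutually equivalent sequents on which only further display postulates can act, so proof search simply reverts to the initial (variant-free) sequent of the chain and continues; no termination measure and no case analysis of commutations is needed. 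Your argument is not wrong --- the lexicographic measure for the first clause and the excision of matched creator/destroyer pairs for the second both go through --- but the paper buys the same conclusion at essentially zero cost, whereas your route imports the full overhead of a Wansing-style elimination procedure to handle interactions that the structure of \fDLG rules out from the start.
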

\begin{proof}
We provide here a sketch of the proof. First of all, notice that display postulates are invertible unary structural rules, no other structural rules is allowed in positive or negative sequents, and auxiliary formulas in tonicity rules occur in isolation. Therefore, even tough we may apply a series of display postulates, what we get are equivalent sequents that can be further manipulated only by applications of display postulates. Therefore, the proof search boils down to retrieve back the initial sequent of this list of equivalent sequents and continue as planned. 
\end{proof}

We can now state the strong focalization property tailored to \fDLG.

\begin{theorem}\label{thm:cfdlg-strongly-focalizaed}
Every cut-free and $\ell$-$r$-variants-free proof in \fDLG is strongly focalized.
\end{theorem}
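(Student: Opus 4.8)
The plan is to construct the required proof-section explicitly from the tonicity rules that introduce the connectives of the PIA subtree, and then to show---using the $\ell$-$r$-variant-free hypothesis together with Propositions~\ref{prop:connective-introduction-generation-tree} and~\ref{prop:transition-node-shift}---that these rules are forced to sit one immediately above the other, with no non-tonicity rule interleaved. First I would fix a formula $A$ occurring in $\pi$ and a PIA subtree $T$ in the signed generation tree of $A$. By Proposition~\ref{prop:connective-introduction-generation-tree}, every connective $h$ labelling a node of $T$ is introduced, at its uppermost occurrence, by a tonicity rule $R_h$; the proof-section $\pi'$ will be the family $\{R_h\}_{h\in T}$. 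The outermost connective of $T$ is the last of $T$'s connectives to be introduced, hence it occurs lowest and supplies the root of $\pi'$, while the leaves of $T$ are the innermost ones. By the full polarisation of the syntax a child of a PIA node is either again a PIA node (a pure subformula) or a \emph{shifted} formula, and by Proposition~\ref{prop:transition-node-shift} the latter is exactly a transition node; together with atoms, these subformulas constitute the boundary of $T$. Accordingly, the premises of the $R_h$ that carry a shifted subformula or an atom are declared leaves of $\pi'$, while the premises carrying pure subformulas are continued.

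The crux is the following locality claim: if $B$ is a pure (non-shifted) formula whose top connective $h$ is a PIA node and $S$ is a positive or negative sequent (red or blue turnstile) in which $B$ occurs in display (isolated) position, then in a cut-free, $\ell$-$r$-variant-free proof the \emph{only} rule that can conclude $S$ is the tonicity rule $R_h$ introducing $h$. I would prove this by inspecting each rule of \fDLG, organised by the colour of its conclusion's turnstile. The decisive observations are: (i) every display postulate whose conclusion is a positive or negative sequent in \eqref{eq:display-postulates} involves an $\ell$- or $r$-variant connective, and is therefore unavailable in a variant-free proof; (ii) the structural rules $\SRDARR$, $\SBUARR$ and the shift display postulates only produce positive/negative sequents whose displayed structure carries a shift at its top, contradicting purity of $B$; (iii) translation rules either conclude neutral sequents or introduce a shift, contradicting purity and the PIA status of $h$; (iv) cut rules are excluded by cut-freeness; and (v) each tonicity rule in \eqref{eq:operational-rules} places the immediate subformulas of its principal formula in display in positive/negative premises, with precedent/succedent position matching the order-type signs of the generation tree, so the matching $R_h$ is the unique possibility. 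The case $B$ in succedent of a red sequent (for an $\mathcal{F}$-typed PIA node) and the dual case $B$ in precedent of a blue sequent (for a $\mathcal{G}$-typed PIA node) are symmetric.

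Granting the locality claim, the subformula-carrying premise of each $R_{h'}$ is literally the conclusion of the $R_h$ of the corresponding child $h$, so the rules $\{R_h\}_{h\in T}$ chain together with nothing in between. Hence $\pi'$ is connected and respects rule boundaries: whenever a node of $\pi'$ is the conclusion of some $R_{h'}$ and is not a leaf of $\pi'$, both of its premises lie in $\pi'$ (one continues into $T$, the other---if it carries a shifted subformula or an atom---is a leaf of $\pi'$). Since every rule of $\pi'$ is by construction a tonicity rule, $T$ is constructed by a proof-section containing only tonicity rules. As $A$ and $T$ were arbitrary and $\pi$ is cut-free by hypothesis, $\pi$ is strongly focalised.

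I expect the locality claim to be the main obstacle, since it is where the variant-free hypothesis is genuinely used and where all the bookkeeping lives: it requires a complete, though routine, case analysis over the rules of \eqref{eq:operational-rules}--\eqref{eq:display-postulates}, and one must track carefully that the displayed formula's polarity and its precedent/succedent position are consistent with the signs propagated through the signed generation tree. The two background Propositions do the conceptual work---\ref{prop:connective-introduction-generation-tree} pins PIA connectives to tonicity rules, and \ref{prop:transition-node-shift} confines shifts to transition nodes so that a focused phase can only be exited through a shift---so that the remaining task is to verify, colour by colour, that no display postulate, structural rule, translation rule, or cut can intrude while a pure PIA formula is held in display.
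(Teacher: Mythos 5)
Your proposal is correct and follows essentially the same route as the paper: both rest on Proposition~\ref{prop:connective-introduction-generation-tree} to pin PIA connectives to tonicity rules, on Proposition~\ref{prop:transition-node-shift} to confine shifts to the boundary of the PIA subtree, and on the key observation (your ``locality claim'', the paper's remark that the only variant-free rules applicable to sequents of the form $X \rvd P$ or $N \bvd \Delta$ are tonicity rules) that forces the introducing rules to chain into a single proof-section. The only difference is presentational: the paper packages this as an induction on descendant-closed subtrees with the invariant that intermediate end-sequents are focused, whereas you isolate the same fact as a standalone lemma and then chain.
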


\begin{proof}
Fix a cut-free and $\ell$-$r$-variants-free \fDLG-proof $\pi$, a formula $A$ occurring a sequent of $\pi$, and a PIA subtree $\Sigma$ of $A$. We prove by induction on $\Sigma$ that for every subtree $\Sigma'$ of $\Sigma$ which is closed by descendent, the subgraph of $\pi$ formed by the rules introducing the connectives of $\Sigma'$ is a proof-section of $\pi$ of end sequent $S$, and if $\Sigma' \neq \Sigma$ then $S$ is of the form $(*):$ $X \rvd P$ or $N \bvd \Delta$.

Call $h$ the root of $\Sigma'$ and $R$ the rule introducing $h$ in $\pi$. We decompose $\Sigma' = h(\Sigma_1,..., \Sigma_n)$, with $n \in \{1,2\}$ ($h$ is a shift or LG connective) and $\Sigma_i$ a subtree closed by descendent. As $h$ is a PIA node, $R$ is a tonicity rule by proposition \ref{prop:connective-introduction-generation-tree}.

Case (a): If $\Sigma_i$ is empty, we let $\pi_i$ be the tree consisting of the $i$-th premise $S_i$ of $R$. As $S_i$ is derivable, $\pi_i$ is a proof-section.

Case (b): If $\Sigma_i$ is non-empty, we apply the induction hypothesis on $\Sigma_i$, yielding a proof-section $\pi_i$ of $\pi$ containing only tonicity rules and of end sequent $S_i$ of the form $(*)$.

Take $\pi'$ the subgraph made of $\pi_1$,..., $\pi_n$ and the conclusion $S$ of $R$. In case (a), $S$ is connected to $\pi_i$ by construction. In case (b), by looking at the rules, the only variant-free rules applicable on focused sequents (i.e. sequents of the form $(*)$) are tonicity rules, introducing an operational connective. Therefore, the only possibility is that the rule after $\pi_i$ is $R$, so $S$ is connected to $\pi_i$. Therefore, $\pi'$ is a proof-section containing only tonicity rules and introducing all connectives of $\Sigma'$.

If $\Sigma \neq \Sigma'$, $h$ is not the root of $\Sigma$. Therefore, $h$ is not a transition node and not the root of $A$, so $h$ is not a shift by proposition \ref{prop:transition-node-shift}. Therefore, $S$ is also of the form $(*)$.



\end{proof}

\begin{proposition}\label{prop:no-empty-pia}
Every PIA subtree of a formula occurring in a variant-free \fDLG-sequent contains at least one LG-connective.
\end{proposition}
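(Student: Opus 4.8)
The plan is to analyse the root of an arbitrary PIA subtree and to push all the work onto the behaviour of the shifts, which is already controlled by Proposition~\ref{prop:transition-node-shift}. Fix a variant-free formula $A$, one of its signed generation trees $+A$ or $-A$, and a PIA subtree $\Sigma$ of it (a maximal connected subgraph all of whose nodes are PIA); write $h$ for the root of $\Sigma$. By Proposition~\ref{prop:connective-introduction-generation-tree} every connective node of $\Sigma$ is either $-f$ with $f\in\F$ or $+g$ with $g\in\G$, and among these the shifts occur only as $-\sbuar$ and $+\srdar$. The underlying bookkeeping is the sign--sort alignment used to prove Proposition~\ref{prop:transition-node-shift}: call a node \emph{aligned} if its $\pm$ sign agrees with the positive/negative sort of its subformula. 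The order-types in~\eqref{eq:order-types} show that every LG-connective \emph{preserves} alignment (an $\F$-connective has positive target, and its $i$-th argument is positive iff $\epsilon(\star,i)=1$, dually for $\G$), whereas each shift \emph{flips} it; hence an LG-connective node is PIA exactly when it is anti-aligned, while a shift node is PIA exactly when it is aligned.

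The first genuine step is to confine shifts to the root of $\Sigma$. Proposition~\ref{prop:transition-node-shift} says that any shift labels a transition node or the root of $A$; since by Definition~\ref{def:transition-node} a transition node is the uppermost node of its subtree, a shift occurring in $\Sigma$ must be the uppermost node of $\Sigma$, i.e.\ $h$ itself. Consequently every connective node of $\Sigma$ other than $h$ is one of $\mand,\mdlarr,\mdrarr,\mor,\mrarr,\mlarr$, so \emph{if $\Sigma$ has two or more connective nodes it already contains an LG-connective}. It then remains to treat a PIA subtree whose only connective is its root $h$. If $h$ is an LG-connective there is nothing to prove, so suppose $h$ is a shift, say $h=-\sbuar$ (the case $+\srdar$ is symmetric). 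The grammar of \fDLG\ forces the unique argument of $\sbuar$ to be a \emph{pure} positive formula $P$, and the alignment computation places the child node in the same subtree $\Sigma$; if $P$ is compound its principal connective lies in $\{\mand,\mdlarr,\mdrarr\}$, is anti-aligned, hence PIA, hence in $\Sigma$, and again we are done.

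The step I expect to be the main obstacle is the base case in which the shift's argument is a propositional variable, i.e.\ $P=p$ (or dually $N=n$): read literally the grammar permits $\sbuar p$ and $\srdar n$, whose PIA subtree is a single shift and contains no LG-connective. Handling this correctly requires pinning down precisely which PIA subtrees the statement quantifies over. The line I would pursue is to sharpen the transition-node analysis behind Proposition~\ref{prop:transition-node-shift} so as to show that in a variant-free sequent a shift always sits at the interface of a \emph{non-trivial} PIA phase---its argument being decomposed by at least one of $\mand,\mdlarr,\mdrarr,\mor,\mrarr,\mlarr$ before an atom is reached---and, failing that, to restrict the statement to the PIA subtrees that actually arise in the focalisation of Theorem~\ref{thm:cfdlg-strongly-focalizaed}, where a lone shift over a variable is absorbed into the neighbouring phase rather than standing as a separate phase. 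Once the atomic configuration is excluded or reinterpreted in this way, the structural reduction of the previous paragraph applies verbatim, and every PIA subtree $\Sigma$ contains at least one LG-connective.
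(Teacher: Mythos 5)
Your proof takes the same route as the paper's, which is only a three-line remark: the source sort of $\sbuar$ and $\srdar$ is pure, so the argument of a shift must begin with an LG-connective or an atom, and hence shifts cannot compose. You reconstruct exactly this, and you make explicit the step the paper leaves implicit, namely that by Proposition \ref{prop:transition-node-shift} a shift inside a PIA subtree can only be its uppermost node, so that any PIA subtree with two or more connective nodes already contains an LG-connective, and only the single-node case rests on the purity of the shift's source sort. (Minor point: the fact that the PIA nodes of $\Sigma$ are the $-f$ and $+g$ nodes is just the definition of PIA, not Proposition \ref{prop:connective-introduction-generation-tree}, which concerns how connectives are introduced in proofs.) The atomic configuration you single out is a genuine issue with the statement as literally phrased: $\SBUARR\, p \bvvvd \sbuar p$ is derivable from $p \rvd p$ by $\sbuar_R$, and the PIA subtree of the succedent $\sbuar p$ is the lone node $-\sbuar$, which contains no LG-connective. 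The paper's own proof does not dispose of this case either --- it concedes that the argument of a shift may be ``a LG formula or an atom'' and stops at the non-composability of shifts --- so this is a defect of the proposition's formulation rather than of your argument. What the proposition is actually used for (a PIA subtree cannot be a chain of shifts, hence one cannot defocus a formula and immediately refocus on it) survives the degenerate case, and your suggested repairs --- restricting the claim to PIA subtrees over compound formulas, or absorbing a lone shift over an atom into the adjacent phase --- are both adequate; one of them should simply be recorded as a caveat.
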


\begin{proof}
This is due to the fullness of the polarization, i.e. the sort of shifts. The target of $\sbuar$ and $\srdar$ is shifted but their source sort is pure, i.e. their argument must begin by a LG formula or an atom. In other words, composing $\sbuar$ and $\srdar$ is impossible.
\end{proof}

Proposition \ref{prop:no-empty-pia} forces the focused sections to be uninterrupted from the point of view of LG connectives. In a forward-looking derivation, it is then impossible to defocus a formula $A$, and then refocus on $A$. Therefore, translating a \fDLG derivation to \fLG by removing shift rules would preserve strong focalization.

Now we provide the definition of phases and phase transitions tailored to \fDLG.

\begin{definition}[Phases and phase transitions]
\label{def:phase-transition}
Let $\pi$ be a cut-free and $\ell$-$r$-variant-free proof in \fDLG. A sequent $S$ occurring in $\pi$ is \emph{focused} (aka $S$ is in a focused phase of $\pi$) if it is positive ($S = \GX \rvD \GY$) or negative ($S = \GD \bvD \GG$) and no structural shift occurs in $S$ (namely, $\SRDARR,\RDARR, \SBUARR,\BUARR$). Any other sequent $S'$ occurring in $\pi$ is \emph{non-focused} (aka $S'$ is in a non-focused phase of $\pi$).

A \emph{phase transition} in $\pi$ is a proof-section $\pi'$ of $\pi$ such that the LG-connectives tonicity rules are not applied in $\pi'$ and its initial-sequent is focused (resp.~non-focused) iff its end-sequent is non-focused (resp.~focused). A phase transition where the initial-sequent is focused is called \emph{defocusing}, and \emph{focusing} otherwise. 
\end{definition}

By design of \fDLG, the application of a shift logical rule is needed to move from a focused to a non-focused phase (resp.~from a non-focused to a focused phase). Therefore, we may say that principal shifted formulas are the gate-keepers of phase transitions. Because \fDLG enjoys subformula property, any formula introduced in a cut-free proof $\pi$, and in particular shifted formulas, will also occur in the conclusion of $\pi$. Therefore, we may say that shifted formulas are \emph{witnesses} of the relevant proof structure of $\pi$. Therefore, we find useful to introduce the following nomenclature:
\begin{definition}[Entry-point and exit-point]
\label{def:EntryExitPoints}
The principal formula introduced by $\srdar_L$ (resp.~$\sbuar_R$) is called the positive (resp.~negative) \emph{entry-point} of the induced phase transition. The principal formula introduced by $\srdar_R$ (resp.~$\sbuar_L$) is called the positive (resp.~negative) \emph{exit-point} of the induced phase transition. 
\end{definition}

We now provide a diagrammatic representation to visualise the topology of rules and phase transitions tailored to \fDLG and, ultimately, make apparent the strong focalization property.

The diagram in figure \ref{fig:seq-graph-schema} depicts the phase transition flow chart of \fDLG. The white area contains the generic form of focused sequents, where each of them is either positive or negative (see definition \ref{def:phase-transition}). The grey and yellow areas contain the generic form of non-focused sequents (see definition \ref{def:phase-transition}), where all neutral sequent seat inside the yellow area.\footnote{Notice that sequents of the form $\SX \urvvd Y$, $\Delta \ubvvd \SG$ and $\SX \vvvd \SD$ are not derivable (see proposition \ref{prop:NonDerivableSequents}) and, therefore, they are not included in the diagram.} We use metavariables $S, S', S''$ for sequents and arrows to depict rules. Arrows $\xrightarrow{R}$ and ${\circ\!\!\xrightarrow{R}}$ points towards the conclusion of the rule $R$. In particular, $\xrightarrow{R} S$ represents a zeroary rule $R$ (i.e.~an axiom) and $S' \xrightarrow{R} S$ represents a unary rule $R$ (here shift logical rules). A double-headed arrow $\xleftrightarrow{R}$ represents an invertible rule (here structural rules introducing or eliminating a shift).  $ \multimap$ are `teleporters' where the configuration ${S'\multimap}$ and ${S''\multimap}$ together with ${\circ\!\!\rightarrow\, }S$ represents a binary rule with premises $S', S''$ and conclusion $S$ (i.e.~tonicity rules).\footnote{Notice that in this case we do not explicitly mention the name of the rule in the diagram.} To exemplify the conventions involving teleporters, let us consider two configurations included in the diagram of figure \ref{fig:seq-graph-schema}. (i) Sequents of the form $X \rvd P$ could occur as premises and conclusion of $\mand_R$, therefore they occurs in the configuration ${X \rvd P\multimap}$ and ${X \rvd P\multimap}$ together with ${\circ\!\!\to}\, X \rvd P$. (ii) Sequents of the form $X \rvd P$ could occur as premise of $\mrarr_L$ and sequents of the form $N \bvd \Delta$ could occur as premise and conclusion of $\mrarr_L$, therefore they occur in the configuration ${X \rvd P\multimap}$ and ${N \bvd \Delta\multimap}$ together with ${\circ\!\!\rightarrow\, N \bvd \Delta}$.

Summing up, the topology of rules is a follows: (i) the white area is closed under axioms, tonicity rules, and display postulates for $\ell$-$r$-variants, (ii) the gray area is closed under display postulates for shifts and $\ell$-$r$-variants, (iii) the yellow area is closed under any other structural rules (i.e.~display postulates for LG-connectives and, whenever we consider analytic extensions of the minimal logic, all the relevant additional structural rules) and translation rules, (iv) the boundary between white and gray areas is crossed only by (non-invertible) shift logical rules, and (v) the boundary between gray and yellow area is crossed only by (invertible) shift structural rules. 

\begin{figure}[ht]
\begin{center}
\hspace{-0.5cm}
\begin{tikzpicture}[x=12.5ex, thick,
inner sep=0.3ex, outer sep=1pt, minimum size=0ex, label distance=-0.5ex]
\draw[fill=gray!20] (0.45,-2) rectangle (3.5,-10);
\draw[fill=yellow!20] (1.6,-4.55) rectangle (2.45,-7.47);
\draw (3,-2.5) node(xsd'){$X \rvvd \SRDARR \Delta$};
\draw (2,-2.5) node(xRDARRd){$X \rvd \RDARR \SD$};

\draw (2,-3.5) node(sxsp){$\SBUARR X \bvvvd\! \sbuar P$};
%
\draw (-0.4,-3.5) node[](mxp){$X \rvd P$};
\draw (2,-5) node[](xd'){$X \vvdt \SD$};
\draw (3.1,-5) node[](SPD'){$\SBUARR P \bvvvd \SD$};
\draw (3.1,-6) node[](SPD){$\SBUARR P \bvvd \Delta$};
\draw (4.3,-5) node[](sPD'){$\sbuar P \bvvvd \SD$};
\draw (2,-6) node[](xd){$X \vd \Delta$};
\draw (0.9,-6) node[](xSN){$X \rvvd \SRDARR N$};
\draw (0.9,-7) node[](x'SN){$\SX \rvvvd \SRDARR N$};
\draw (-0.4,-6) node[](mxp'){$X \rvvd \srdar N$};
\draw (4.3,-6) node[](sPD){$\sbuar P \bvvd \Delta$};
\draw (2,-7) node[](x'd){$\SX \tvvd \Delta$};
\draw (-0.4,-7) node[](mx'p'){$\SX \rvvvd \srdar N$};
\draw (4.3,-8.5) node[](nmd){$N \bvd \Delta$};
\draw (2,-8.5) node(snsd){$\srdar N \rvvvd\! \SRDARR \Delta$};
\draw (2,-9.5) node(sBUARRd){$\BUARR \SX \bvd\! \Delta$};
\draw (1,-9.5) node(snd){$\SBUARR X \bvvd \Delta$};
\draw[->] (-1.3,-3.5) -- node[above]{$p$-Id} (mxp);
\draw[->] (5.2,-8.5) -- node[above]{$n$-Id} (nmd);
\draw[o->] (-0.4,-2.5) node[above]{} -- (mxp);
\draw[o->] (4.3,-9.5) node[below]{} -- (nmd);
\draw[-o] (sPD') -- (5,-5) node[above]{};
\draw[-o] (mxp) -- (-0.4,-4.5) node[below]{};
\draw[-o] (mxp') -- (-1.1,-6) node[above]{};
\draw[-o] (mx'p') -- (-1.1,-7) node[above]{};
\draw[-o] (sPD) -- (5,-6) node[above]{};
\draw[-o] (nmd) -- (4.3,-7.5) node[above]{};
\draw[->] (nmd) -- node[pos=.23,above]{$\srdar_L$} (snsd);
\draw[->] (mxp) -- node[pos=.25,above]{$\sbuar_R$} (sxsp);
\draw[->] (sxsp) -| node[pos=.33,above]{$\sbuar_L$} (sPD');
\draw[->] (snsd) -| node[pos=.32,above]{$\srdar_R$} (mx'p');
\draw[->] (xSN) -- node[pos=.4,above]{$\srdar_R$} (mxp');
\draw[->] (x'SN) -- node[pos=.4,above]{$\srdar_R$} (mx'p');
\draw[->] (SPD') -- node[pos=.4,above]{$\sbuar_L$} (sPD');
\draw[->] (SPD) -- node[pos=.4,above]{$\sbuar_L$} (sPD);
\draw[<->] (sxsp) edge node[left]{$\SBUARR$} (xd');
\draw[<->] (snsd) edge node[right]{$\SRDARR$} (x'd);
\draw[<->] (xd) -- node[pos=.6,above]{$\SBUARR$} (SPD);
\draw[<->] (x'd) -- node[pos=.5,above]{$\SRDARR$} (x'SN);
\draw[<->] (xd) -- node[pos=.5,above]{$\SRDARR$} (xSN);
\draw[<->] (xd') edge node[pos=.6,above]{$\SBUARR$} (SPD');
\end{tikzpicture}
    \caption{\ \ The topology of \fDLG-rules and phase transitions.}
    \label{fig:seq-graph-schema}
\end{center}
\end{figure}
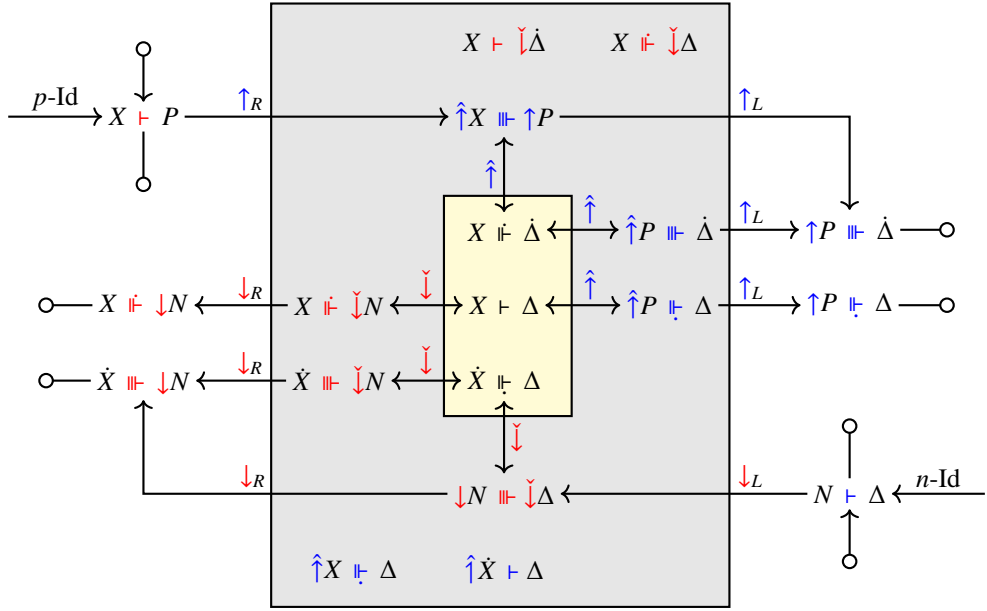

%

%

\section{Completeness of focusing}\label{sec:completeness-of-focusing}

In this section first we prove that the focused calculus \fDLG is sound and complete w.r.t.~$\FPLG$-. Then we prove that \fDLG is sound and complete w.r.t.~LG-algebras: this amounts to a semantic argument showing the so-called completeness of focusing. 

\subsection{Soundness and completeness w.r.t.~\texorpdfstring{$\FPLG$}{FPLG}-algebras}\label{sec:soundness-completeness}

Soundness and completeness are proven as usual in the case of algebraic semantics, where the only departure is that now we consider weakening relations instead of just orders. Soundness is stated as follows:  


\begin{theorem}\label{thm:soundness}
Each rule of the focused display calculus \fDLG is sound under any interpretation in a fully polarized algebras $\FPLG$.
\end{theorem}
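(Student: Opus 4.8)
The plan is to fix an interpretation and then verify soundness rule by rule. First I would fix an $\FPLG$-algebra $\bbA$ together with a valuation sending each $p \in \mathsf{AtProp}^+$ to an element of $\P$ and each $n \in \mathsf{AtProp}^-$ to an element of $\N$, and extend it homomorphically to a map $\val{\cdot}$ on all formulas and structures: every structural connective is sent to its operational counterpart (the LG-connectives and LG-variants of definition \ref{def:fplg}), and the four structural shifts $\SRDARR, \RDARR, \SBUARR, \BUARR$ to $\srdar, \rdar, \sbuar, \buar$ respectively. A sequent is then declared \emph{valid} under the interpretation according to the weakening relation attached to its turnstile: reading off \eqref{eq:turnstiles} against Fig.~\ref{fig:fplg-wr}, the four positive turnstiles collect into $\arvD$ on $\gP$, the four negative turnstiles into $\abvD$ on $\gN$, and the four neutral turnstiles into $\hvD \subseteq \gP \times \gN$, the combination absent from each of these relations corresponding exactly to one of the non-derivable sequent forms of proposition \ref{prop:NonDerivableSequents}. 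The claim to prove is that for each rule, if every premise is valid under every valuation then so is the conclusion.

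Then I would group the rules by the algebraic fact they reflect. The identity axioms reduce to reflexivity of the orders $\arvd$ and $\abvd$. The homogeneous cuts (P-Cut, N-Cut) reduce to transitivity of $\arvD$ and $\abvD$, and the mixed cuts (Pn-Cut, nN-Cut) to the left- and right-compatibility of the weakening relation $\hvD$ with these orders, which is precisely definition \ref{def:WeakeningRelation}. The translation rules are immediate, because a structural connective and its operational counterpart carry the same interpretation, so premise and conclusion have identical denotations. The tonicity rules reduce to order-preservation (resp.~order-reversal in the $\partial$-coordinates dictated by \eqref{eq:order-types}) of the LG-connectives and shifts, which follows from the adjunctions \eqref{eq:adj-fplg}, \eqref{eq:adj-fplg-variants} and \eqref{eq:shifts-fplg}; for the shift translation rules $\srdar_L$ and $\sbuar_R$ this is just monotonicity of $\srdar$ and $\sbuar$.

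The display postulates are the heart of the matter: each bidirectional rule must be read as one of the biconditionals in \eqref{eq:adj-fplg} (for the LG-connectives), in \eqref{eq:adj-fplg-variants} (for the $\ell,r$-variants), or as a shift adjunction --- $\SBUARR \dashv \RDARR$ and $\BUARR \dashv \SRDARR$ unfold to the two adjunctions of \eqref{eq:shifts-fplg}, while $\SBUARR \dashv \SRDARR$ unfolds to the heterogeneous adjunction \eqref{eq:shift-intro-elim}. Finally the two structural shift rules reduce to the fact, recorded in proposition \ref{prop:wr}, that $\vvdh$ and $\hvvd$ are the weakening relations \emph{represented} by $\sbuar \dashv \rdar$ and $\buar \dashv \srdar$, used together with \eqref{eq:shift-intro-elim} to cover both the pure and the shifted instance of each rule.

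I expect the main obstacle to be bookkeeping rather than mathematical depth: because every binary logical rule and every display postulate abbreviates four instances (one per pure/shifted combination of its general metavariables), I must check that in each instance the turnstiles of premises and conclusion land on the correct sub-relation of $\arvD$, $\abvD$ or $\hvD$, and that the target sorts of the operations match --- for example $\mand : \gP \times \gP \to \P$, so that $\mand_R$ genuinely concludes in $\arvd$ and not merely in $\arvD$. Once this sort-and-relation discipline is set up, every case collapses to one of the five facts above, and the antitone coordinates introduced by the order-type $\partial$ are handled by passing to the order dual exactly as in definition \ref{def:fplg}.
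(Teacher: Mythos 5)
Your proposal is correct and follows essentially the same route as the paper: the paper's proof fixes an interpretation, records the turnstile-to-weakening-relation table \eqref{eq:turnstile-interpretation}, and declares the rule-by-rule induction straightforward, which is exactly the verification you spell out (identities from reflexivity, cuts from transitivity and compatibility of $\hvD$, translation rules from the shared denotation of structural and operational connectives, tonicity rules from monotonicity, display postulates from the adjunctions \eqref{eq:adj-fplg}, \eqref{eq:adj-fplg-variants}, \eqref{eq:shifts-fplg} and \eqref{eq:shift-intro-elim}). Your version is simply more explicit than the paper's sketch, and the sort-and-relation bookkeeping you flag is indeed the only real content of the omitted cases.
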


\begin{proof}
Given a $\FPLG$-algebra $\bbA$ and an interpretation $(\cdot)^\bbA$, it is straightforward to check by induction on the complexity of proofs that for every sequent $S$ derivable in \fDLG, the interpretation $(S)^\bbA$ is valid. We leave the proof to the reader. Below we simply recall that interpretations of pure atomic formulas $p^\bbA$ and $n^\bbA$ homomorphically extend to arbitrary formulas, and each consequence relation is interpreted by a weakening relation as follows

\begin{equation}\label{eq:turnstile-interpretation}
\begin{array}{|c|c|c|c|c|c|c|c|c|c|c|c|c|} \hline
     t & \nrvd &  \nrvvd & \nrvvvd &
        \nbvd & \nbvvd &  \nbvvvd &
        \nvd & \nvvd & \vvdn & 
        \rvD & \bvD & \vD \\ \hline
    
    t^\bbA & \arvd &  \hrvvd & \arvvvd &
        \abvd & \hbvvd &  \abvvvd &
        \hvd & \hvvd & \vvdh & 
        \arvD & \abvD & \hvD \\ \hline
\end{array}
\end{equation}
\end{proof}


In order to prove completeness, we need to introduce the auxiliary notion of standard sequents. 

\begin{definition}
The principal subtree of a structure $\Psi$ is the largest subtree of the signed generation tree of $A$ containing the root and which is either a skeleton subtree or a PIA subtree.
\end{definition}


\begin{definition}\label{def:ftom}
Let $\Psi$ be a structure. We call $\Ftom{\Psi}$ (resp.~$\FtoM{\Psi}$) the structure of same sort obtained, when it is defined, by turning every connective of its principal subtree $\Sigma$ into either
\begin{romanenumerate}
    \item its structural counterpart if $\Sigma$ is a skeleton subtree of $+\Psi$ (resp. $-\Psi$)
    \item its operational counterpart if $\Sigma$ is a PIA subtree of $+\Psi$ (resp. $-\Psi$)
\end{romanenumerate}
and turning all other connectives into their operational counterpart.

Given a well-formed sequent $\Psi \t \Phi$, its \textbf{standard sequent} is $\Ftom{\Psi} \t \FtoM{\Phi}$.
\end{definition}

To exemplify the instrumental use of standard sequents in proving completeness, consider the following observation. The sequent $p \mand q \rvd p \mand q$ is not derivable in \fDLG, despite the fact that $\arvd$ is a partial order, so in particular $p^{\P} \mand^{\P} q^{\P} \arvd p^{\P} \mand^{\P} q^{\P}$ in every $\FPLG$-algebra. However, the standard sequent $p \MAND q \rvd p \mand q$ is \fDLG-derivable and moreover $(p \MAND q)^{\P} = (p \mand q)^{\P} = p^{\P} \mand^{\P} q^{\P}$. See lemma \ref{lem:prop-ftom} in appendix \ref{ap:proofs} for a recursive definition of $\Ftom{~}$ and $\FtoM{~}$. Completeness is stated as follows:

\begin{theorem}\label{thm:completeness}
For every $\FPLG$-algebra $\bbA$ and every well-formed sequent $\Psi \t \Phi$ in the language of \fDLG, if the interpretation $(\Psi \t \Phi)^\bbA$ is a valid, then the standard sequent $\Ftom{\Psi} \t \FtoM{\Phi}$ is derivable in \fDLG.
\end{theorem}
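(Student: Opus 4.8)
The plan is to prove completeness by building a Lindenbaum--Tarski-style term model, i.e.\ a single $\FPLG$-algebra $\bbA^\ast$ whose carriers are made of formulas and whose weakening relations are \emph{defined} to be derivability of standard sequents, so that validity in $\bbA^\ast$ under the canonical valuation coincides on the nose with derivability of the standard sequent. Reading the theorem contrapositively, it then suffices to exhibit $\bbA^\ast$ as a concrete countermodel: if $\Ftom{\Psi}\t\FtoM{\Phi}$ is not \fDLG-derivable, then $(\Psi\t\Phi)^{\bbA^\ast}$ fails under the canonical valuation, so the sequent is not valid. Since by soundness (theorem~\ref{thm:soundness}) together with this construction validity in $\bbA^\ast$ is equivalent to derivability, instantiating the hypothesis at $\bbA^\ast$ yields the claim.

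First I would take as the four carriers the pure positive, shifted positive, pure negative and shifted negative formulas, each quotiented by provable equivalence, and interpret every LG-connective, every $\ell,r$-variant and the shifts $\sbuar,\srdar$ by their operational counterparts acting on classes; the two remaining shift adjoints $\rdar$ and $\buar$, which are not primitive connectives of \fDLG, are recovered from the structural shift operators $\RDARR,\BUARR$ through the display postulates \eqref{eq:display-postulates}. Every order and weakening relation of definition~\ref{def:fplg} ($\arvd,\arvvvd,\abvd,\abvvvd$, the heterogeneous $\hrvvd,\hvd,\hbvvd$, and the shift-represented $\vvdh,\hvvd$) is defined by $A\,R\,B$ iff the standard sequent of $A\t B$ is derivable, following the turnstile-to-relation dictionary \eqref{eq:turnstile-interpretation}; the collage posets $\gP,\gN$ and the collage relation $\hvD$ are then forced by definitions~\ref{def:collage-order} and~\ref{def:collage-wr}, and the canonical valuation sends $p\mapsto[p]$, $n\mapsto[n]$. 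Three calculus-level facts make the carriers genuine posets: transitivity is an instance of the cut rules, antisymmetry holds by the quotient, and reflexivity is exactly the identity-expansion statement that $\Ftom{A}\t\FtoM{A}$ is derivable for every formula $A$, proved by induction on $A$ with the logical rules. This last point is precisely why the theorem speaks of the standard sequent rather than of $\Psi\t\Phi$ itself, as the example $p\mand q\t p\mand q$ (underivable, yet with derivable standard sequent $p\MAND q\t p\mand q$) illustrates.

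The hard part will be verifying that $\bbA^\ast$ genuinely satisfies \emph{all} clauses of definition~\ref{def:fplg}: order-compatibility of each weakening relation and the three families of (heterogeneous) adjunctions \eqref{eq:shift-intro-elim}, \eqref{eq:adj-fplg}, \eqref{eq:adj-fplg-variants}. Here I would lean on the fact that the display postulates \eqref{eq:display-postulates} and the structural shift rules are \emph{invertible} double-line rules, so each biconditional of an adjunction law is witnessed by the interderivability of the two sequents of one such rule, while order-compatibility of the weakening relations follows from the tonicity rules together with cut. Once $\bbA^\ast$ is confirmed to be an $\FPLG$-algebra, the proof closes with a routine induction showing that under the canonical valuation $\Psi^{\bbA^\ast}=[\Ftom{\Psi}]$ and $\Phi^{\bbA^\ast}=[\FtoM{\Phi}]$, using that a connective and its structural counterpart receive the same algebraic interpretation (so $\Psi$ and $\Ftom{\Psi}$ interpret identically). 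Validity of $(\Psi\t\Phi)^{\bbA^\ast}$ then reads, via \eqref{eq:turnstile-interpretation}, as $[\Ftom{\Psi}]\,R\,[\FtoM{\Phi}]$ for the appropriate weakening relation $R$, which by construction of $\bbA^\ast$ means exactly that $\Ftom{\Psi}\t\FtoM{\Phi}$ is \fDLG-derivable, as required.
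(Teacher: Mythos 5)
Your overall strategy coincides with the paper's: both build a syntactic (Lindenbaum--Tarski-style) $\FPLG$-algebra in which the weakening relations are \emph{defined} as derivability of the corresponding standard sequents, with reflexivity secured by identity expansion ($\Ftom{A}\t\FtoM{A}$ derivable), transitivity by the cut rules, congruence by tonicity plus cut, and the adjunction clauses of definition \ref{def:fplg} witnessed by the invertible display postulates and structural shift rules. However, there is a genuine gap in your choice of carriers. You take the four carriers to be \emph{formulas} quotiented by provable equivalence and propose to ``interpret every LG-connective, every $\ell,r$-variant and the shifts by their operational counterparts acting on classes.'' But the language of \fDLG deliberately contains \emph{no} operational $\ell,r$-variants and no operational $\rdar$, $\buar$ (they are the greyed-out symbols explicitly excluded from the formula grammar), while definition \ref{def:fplg} requires the algebra to carry all twelve variant operations and both shift adjoints. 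On formula classes there is nothing for, say, $\mandl^{\bbA}$ to return: the only shifted negative formulas are of the form $\sbuar P$, and there is no argument (and no obvious reason) why a structure such as $\GD \MANDL \GX$ or $\RDARR\, \SD$ should be provably equivalent to a formula. Your suggestion to ``recover'' $\rdar$ and $\buar$ from $\RDARR$, $\BUARR$ via the display postulates only yields the adjunction \emph{condition}, not a total operation on formula classes; since the paper explicitly does not assume that the composite of two shifts is an identity, one cannot simply set $\rdar^{\bbA}[\sbuar P]:=[P]$ either.

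The paper closes exactly this hole by taking the carriers to be the quotients of the four sets of \emph{structures} ($\mathsf{PurePosStr}/\!\approx_{\P}$, etc.) and defining every operation via its \emph{structural} connective, e.g.\ $[\GX]\mand^{\bbA}[\GY]=[\GX\MAND\GY]$ and $\rdar^{\bbA}[\SD]=[\RDARR\,\SD]$; closure under all the required operations is then automatic, and the relations are still given by derivability of $\Ftom{\Psi}\t\FtoM{\Phi}$. To repair your proof you would either have to adopt this structure-based carrier, or prove the nontrivial (and unaddressed) claim that every $\ell,r$-variant structure and every structural shift adjoint is provably equivalent to a formula of the appropriate sort. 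The remainder of your argument (the verification of the $\FPLG$ axioms and the final identification of validity with derivability of the standard sequent) would then go through as in the paper.
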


\begin{proof}
We prove completeness by building a syntactic model $\bbA$. Let $\approx_{\P}$, $\approx_{\sP}$, $\approx_{\N}$ and $\approx_{\sN}$ be the equivalence relation generated by \eqref{eq:approx}.

\begin{equation}\label{eq:approx}
\begin{array}{ccccc}
    \Psi \approx_{\P} \Phi & \text{ iff } & \Ftom{\Psi} \rvd \FtoM{\Phi} & \text{ and } &
    \Ftom{\Phi} \rvd \FtoM{\Psi} \\
    
    \Psi \approx_{\sP} \Phi & \text{ iff } & \Ftom{\Psi} \rvvvd \FtoM{\Phi} & \text{ and } &
    \Ftom{\Phi} \rvvvd \FtoM{\Psi} \\
    
    \Psi \approx_{\N} \Phi & \text{ iff } & \Ftom{\Psi} \bvd \FtoM{\Phi} & \text{ and } &
    \Ftom{\Phi} \bvd \FtoM{\Psi} \\
    
    \Psi \approx_{\sN} \Phi & \text{ iff } & \Ftom{\Psi} \bvvvd \FtoM{\Phi} & \text{ and } &
    \Ftom{\Phi} \bvvvd \FtoM{\Psi}
\end{array}
\end{equation}

\noindent In particular, $\approx_{\P}$, $\approx_{\sP}$, $\approx_{\N}$ and $\approx_{\sN}$ are congruence relations (by tonicity rules and cut rules, see appendix \ref{ap:proofs} for a detailed proof). For any $s \in \{\P, \sP, \N, \sN\}$, let $[\Psi]_{\approx_{s}}$ denote the class of structures $\Phi$ such that $\Phi \approx_s \Psi$. We define operations and weakening relations by \eqref{eq:op-and-wr-on-eq-classes}.

\begin{equation}\label{eq:op-and-wr-on-eq-classes}
\begin{array}{c}
\begin{array}{ccc}
     [\GX]_{\approx_{\gP}} \mand^{\bbA} [\GY]_{\approx_{\gP}} = [\GX \MAND \GY]_{\approx_{\P}} & 
     [\GX]_{\approx_{\gP}} \mdlarr^{\bbA} [\GD]_{\approx_{\gN}} = [\GX \MDLARR \GD]_{\approx_{\P}} &
     [\GD]_{\approx_{\gN}} \mdrarr^{\bbA} [\GD]_{\approx_{\gP}} = [\GD \MDRARR \GY]_{\approx_{\P}} \\
     
     [\GD]_{\approx_{\gN}} \mor^{\bbA} [\GG]_{\approx_{\gN}} = [\GD \MOR \GG]_{\approx_{\N}} & 
     [\GX]_{\approx_{\gP}} \mrarr^{\bbA} [\GD]_{\approx_{\gN}} = [\GX \MRARR \GD]_{\approx_{\N}} &
     [\GD]_{\approx_{\gN}} \mlarr^{\bbA} [\GD]_{\approx_{\gP}} = [\GD \MLARR \GY]_{\approx_{\N}}
\end{array} \\[3mm]
\begin{array}{cccc}
     \srdar^{\bbA} [\Delta]_{\approx_{\N}} = [\SRDARR \Delta]_{\approx_{\sP}} &
     \rdar^{\bbA} [\SD]_{\approx_{\sN}} = [\RDARR \SD]_{\approx_{\P}} &
     \sbuar^{\bbA} [X]_{\approx_{\P}} = [\SBUARR X]_{\approx_{\sN}} &
     \buar^{\bbA} [X]_{\approx_{\gP}} = [\BUARR X]_{\approx_{\N}} 
\end{array} \\
    \text{and similar for the $\ell,r$-variants} \\[2mm]
     \text{or a turnstile $t$ from sort $s$ to $s'$, }
     [\Psi]_{\approx_s} \ t^{\bbA}\ [\Phi]_{\approx_{s'}} \text{iff }
     \Ftom{\Psi} \t \FtoM{\Phi} \text{ is derivable}
\end{array}
\end{equation}

It is not difficult to see that the operations and relations of \eqref{eq:op-and-wr-on-eq-classes} are well-defined, and that the relations are indeed orders or weakening relations. The technical proof is provided in appendix \ref{ap:proofs}.

We take $\P = \mathsf{PurePosStr} / \!\approx_{\P}$, $\sP = \mathsf{ShiftedPosStr} / \!\approx_{\sP}$, $\N = \mathsf{PureNegStr} / \!\approx_{\N}$ and $\sN = \mathsf{ShiftedNegStr} / \!\approx_{\sN}$. It is easy to show that properties \eqref{eq:shifts-fplg}, \eqref{eq:shift-intro-elim}, \eqref{eq:adj-fplg} and \eqref{eq:adj-fplg-variants} of definition \ref{def:fplg} are verified thanks to the correspondent rules of the calculus.
\end{proof}

\subsection{Soundness and completeness w.r.t.\ LG-algebras}

Given an LG-algebra $\bbG = (G, \leq, \mand^\bbG, \mlarr^\bbG, \mrarr^\bbG, \mor^\bbG, \mdlarr^\bbG, \mdrarr^\bbG)$ we define an $\FPLG$ algebra $\bbA_\bbG$ as follows: We take a copy of $G$ as the domain of any sub-algebra in $\bbA_\bbG$, by defining shifts as maps sending an element to its copy in the appropriate sub-algebra of $\bbA_\bbG$, and finally, for each $A, B$ in the appropriate sub-algebra of $\bbA_\bbG$, for each weakening relation $R$ in $\bbA_\bbG$, and for each binary operation $\star^{\bbA_\bbG}$ in $\bbA_\bbG$, by defining 
\begin{center}
	$A \,R\, B$ \ iff \ $A \leq B$ \ \ \ and \ \ \ $A \star^{\bbA_\bbG} B$ \ iff \ $A \star^\bbG B$
\end{center}

\begin{proposition}
For every LG-algebra $\bbG$, $\bbA_\bbG$ is an $\FPLG$-algebra. 
\end{proposition}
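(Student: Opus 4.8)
The plan is to verify directly that the construction $\bbG \mapsto \bbA_\bbG$ produces an object satisfying every clause of Definition \ref{def:fplg}. This is essentially the content of the forward direction ($\Rightarrow$) of the (commented-out) Theorem \ref{thm:LGAsFPLG}, so I would follow that blueprint: since each of the four posets $\P, \sP, \N, \sN$ is a labelled copy of $(G, \leq)$, and each weakening relation $R$ is defined by $A\,R\,B$ iff $A \leq B$ (reading both arguments back in $G$), the structure is transparently built to mirror $\bbG$. The work is purely a matter of checking well-definedness and that the defining (in)equalities of an $\FPLG$-algebra reduce to facts already available in $\bbG$.

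First I would check that the four relations on a single copy of $G$ are genuine partial orders: since $A\,R\,B$ iff $A \leq B$ and $\leq$ is a partial order on $G$, reflexivity, transitivity and antisymmetry transfer immediately. Next I would verify that each of the three weakening relations $\hrvvd \subseteq \P\times\sP$, $\hvd\subseteq\P\times\N$, $\hbvvd\subseteq\sN\times\N$ is compatible with the relevant orders in the sense of Definition \ref{def:WeakeningRelation}; again this is immediate because all the orders and all the relations are just $\leq$ on (copies of) $G$, so the three-premise compatibility rule collapses to transitivity of $\leq$. I would then confirm that the six LG-connectives and the twelve LG-variants are well-defined maps of the stated sorts: each $\star^{\bbA_\bbG}$ is defined by $A\star^{\bbA_\bbG}B$ iff $A\star^\bbG B$, so its output sort is obtained by placing the value $A\star^\bbG B \in G$ into the appropriate copy, and order-preservation (equivalently monotonicity in the order-type $\epsilon(\star)$) follows from the compatibility of $\star^\bbG$ with $\leq$ guaranteed by Definition \ref{def:LGAlgebra}.

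The heart of the verification is that the adjunction and residuation conditions \eqref{eq:shifts-fplg}, \eqref{eq:shift-intro-elim}, \eqref{eq:adj-fplg} and \eqref{eq:adj-fplg-variants} hold. For the shifts: since $\sbuar, \buar, \srdar, \rdar$ are all the identity-on-$G$ map (relocated between copies), the biconditionals in \eqref{eq:shift-intro-elim}, e.g. $\sbuar P \hbvvd N$ iff $P \hvd N$ iff $P \hrvvd \srdar N$, all reduce to the single statement $P \leq N$ in $G$, so they hold trivially, and the two shift adjunctions of \eqref{eq:shifts-fplg} follow the same way. For the LG-connectives and variants, each three-way biconditional in \eqref{eq:adj-fplg} and \eqref{eq:adj-fplg-variants} unwinds, via the definitions of the operations and of the weakening relations, into exactly one of the residuation laws \eqref{eq:adj-lg} of the underlying LG-algebra $\bbG$; the role of the $\ell$- and $r$-subscripted variants is only to land the value in the correctly-polarized copy, and since every relation is $\leq$ this bookkeeping does not affect the truth of the biconditional. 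I would present one representative case in detail (for instance the $\mrarr$/$\mand$/$\mlarr$ triple of \eqref{eq:adj-fplg}, showing it amounts to $B \leq A\mrarr C$ iff $A\mand B \leq C$ iff $A \leq C\mlarr B$) and remark that all remaining cases are identical modulo relabelling of copies.

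The only genuine subtlety, and the step I expect to require the most care, is the sort bookkeeping: one must check that for each operation and each turnstile the chosen copies of $G$ are consistent with the polarities demanded by the signature in \eqref{eq:order-types}, and in particular that the order-dual positions ($\gP\Pa$, $\gN\Pa$) are handled correctly, so that order-preservation rather than order-reversal is what gets used. Because $\bbA_\bbG$ collapses all distinctions between pure and shifted elements onto a single $G$, there is no real mathematical obstacle here; the risk is purely combinatorial, namely mis-assigning a variant to the wrong copy. I would therefore organize the check around the sort tables of Definition \ref{def:fplg}, confirming that the definition $A\star^{\bbA_\bbG}B$ iff $A\star^\bbG B$ together with the copy-placement of shifts produces maps of exactly the declared sorts, and that Proposition \ref{prop:WeakeningOnCollagePoset} (the equalities ${\hrvvd\!\hvvd} = \hvd = {\vvdh\!\hbvvd}$) is automatically satisfied since all three composites again reduce to $\leq$. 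Concluding, every defining condition of an $\FPLG$-algebra holds, so $\bbA_\bbG$ is an $\FPLG$-algebra.
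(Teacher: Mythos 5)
Your proposal is correct and follows the same route as the paper: the paper's own proof simply asserts that well-definedness and conditions \eqref{eq:shift-intro-elim}, \eqref{eq:adj-fplg} and \eqref{eq:adj-fplg-variants} are ``straightforward to check,'' and your write-up is exactly that check carried out explicitly, with every order, weakening relation and adjunction collapsing to $\leq$ and the residuation laws \eqref{eq:adj-lg} of $\bbG$. No gap; you have merely supplied the detail the paper omits.
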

\begin{proof}
It is straightforward to check that weakening relations and operations are well-defined, and \eqref{eq:shift-intro-elim}, \eqref{eq:adj-fplg}, and \eqref{eq:adj-fplg-variants} hold, so $\bbA_\bbG$ is an $\FPLG$ algebra accordingly to Definition \ref{def:fplg}.
\end{proof}

Conversely, given an $\FPLG$ algebra $\bbA$ we first define $\pi(\bbA)= (L, \leq, \mand^\bbG, \mlarr^\bbG, \mrarr^\bbG, \mor^\bbG, \mdlarr^\bbG, \mdrarr^\bbG)$ by taking $L = \P \sqcup \N $, by defining $A \leq B$ iff $A^+ \hvD B^-$, where 
\begin{equation}
	\gP \ni \ A^+ = \left\{\begin{array}{ll}
		A & \text{if } A \in \P \\
		\srdar A & \text{if } A \in \N 
	\end{array} \right.
	\text{\ \ \  and \ \ \ \ }
	\gN \ni \ A^- = \left\{\begin{array}{ll}
		A & \text{if } A \in \N \\
		\sbuar A & \text{if } A \in \P
		\end{array} \right.
\end{equation}
 and by defining the operations as follows
 \begin{equation}
 	\begin{array}{ccc}
 		A \mand^{\pi(\bbA)} B := A^+ \mand^{\bbA} B^+ &
 		A \mdlarr^{\pi(\bbA)} B := A^+ \mdlarr^{\bbA} B^- &
 		A \mdrarr^{\pi(\bbA)} B := A^- \mdrarr^{\bbA} B^+ \\
 		
 		A \mor^{\pi(\bbA)} B := A^- \mor^{\bbA} B^- &
 		A \mrarr^{\pi(\bbA)} B := A^+ \mrarr^{\bbA} B^- &
 		A \mlarr^{\pi(\bbA)} B := A^- \mlarr^{\bbA} B^+. 
 	\end{array}
 \end{equation}

\begin{proposition}
For every $\FPLG$ algebra $\bbA$, $\pi(\bbA)$ is a pre-order.
\end{proposition}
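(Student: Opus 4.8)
The plan is to check directly that the relation $\leq$ of $\pi(\bbA)$ on $L = \P \sqcup \N$ is reflexive and transitive; since only a pre-order is claimed, antisymmetry is not required. The computations all rest on reading off $\hvD$ componentwise. As a relation on $\gP \times \gN = (\P \sqcup \sP) \times (\N \sqcup \sN)$ we have $\hvD = \vvdh \sqcup \hvd \sqcup \hvvd$ (definition \ref{def:fplg}), so $\hvD$ restricts to $\hvd$ on $\P \times \N$, to $\vvdh$ on $\P \times \sN$, to $\hvvd$ on $\sP \times \N$, and to the empty relation on $\sP \times \sN$ (no component of the collage weakening relation has source in $\sP$ and target in $\sN$). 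Unfolding $A \leq B$, i.e. $A^{+} \hvD B^{-}$, in the four sort-configurations of $(A,B)$ then gives: $A \leq B$ iff $A \vvdh \sbuar B$ iff $\sbuar A \abvvvd \sbuar B$ when $A,B \in \P$; iff $A \hvd B$ when $A \in \P$, $B \in \N$; iff $\srdar A \hvvd B$ iff $\srdar A \arvvvd \srdar B$ when $A,B \in \N$; and the $\sP \times \sN$ block being empty shows $A \leq B$ never holds when $A \in \N$, $B \in \P$. The two equivalences invoke proposition \ref{prop:wr} on the representing adjunctions $\sbuar \dashv \rdar$ and $\buar \dashv \srdar$.

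For reflexivity I split on the sort of $A$. If $A \in \P$ then $A^{+} = A$ and $A^{-} = \sbuar A$, so $A \leq A$ reduces to $A \vvdh \sbuar A$, which holds since $\sbuar A \abvvvd \sbuar A$ and $\vvdh$ is represented by $\sbuar \dashv \rdar$. Dually, if $A \in \N$ then $A^{+} = \srdar A$ and $A^{-} = A$, so $A \leq A$ reduces to $\srdar A \hvvd A$, which holds since $\srdar A \arvvvd \srdar A$ and $\hvvd$ is represented by $\buar \dashv \srdar$.

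For transitivity, assume $A \leq B$ and $B \leq C$. Because $\leq$ never relates a negative element to a positive one, both hypotheses can hold simultaneously only for the sort-patterns $\P\P\P$, $\P\P\N$, $\P\N\N$ and $\N\N\N$ of $(A,B,C)$; all other patterns make one hypothesis vacuous, so it suffices to treat these four. The homogeneous cases are immediate: in $\P\P\P$ the hypotheses read $\sbuar A \abvvvd \sbuar B$ and $\sbuar B \abvvvd \sbuar C$, so $\sbuar A \abvvvd \sbuar C$ by transitivity of the order $\abvvvd$ on $\sN$, giving $A \leq C$; symmetrically $\N\N\N$ follows from transitivity of $\arvvvd$ on $\sP$.

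The two mixed cases are the crux, and both are handled via the heterogeneous adjunction equation \eqref{eq:shift-intro-elim}, which lets me translate the middle element $B$ between its $(\cdot)^{+}$ and $(\cdot)^{-}$ presentations. In $\P\P\N$ I have $\sbuar A \abvvvd \sbuar B$ and $B \hvd C$; the latter gives $\sbuar B \hbvvd C$ by \eqref{eq:shift-intro-elim}, and since $\hbvvd$ is compatible with the order $\abvvvd$ on its source I obtain $\sbuar A \hbvvd C$, whence $A \hvd C$ by \eqref{eq:shift-intro-elim} again, i.e. $A \leq C$. In $\P\N\N$ I have $A \hvd B$ and $\srdar B \arvvvd \srdar C$; now $A \hvd B$ gives $A \hrvvd \srdar B$ by \eqref{eq:shift-intro-elim}, and compatibility of $\hrvvd$ with the order $\arvvvd$ on its target yields $A \hrvvd \srdar C$, whence $A \hvd C$, i.e. $A \leq C$. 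I expect these two mixed cases to be the only real obstacle, precisely because they are the steps that require passing through the middle element in both of its representations; everything else reduces to reflexivity and transitivity of the four pure orders.
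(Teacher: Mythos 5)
Your proof is correct and follows essentially the same route as the paper's: both arguments establish transitivity by using the adjunction equivalences (\eqref{eq:shift-intro-elim} and proposition \ref{prop:wr}) to translate the middle element $B$ between its $(\cdot)^{+}$ and $(\cdot)^{-}$ presentations and then invoking compatibility of the weakening relations, with reflexivity being immediate. The only difference is granularity: the paper cases just on the sort of $B$ and reasons at the level of the collage relations $\hvD$, $\arvD$, $\abvD$, whereas you unfold $\hvD$ componentwise into its four sort blocks (correctly noting the $\sP\times\sN$ block is empty) — a fully expanded version of the same argument.
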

\begin{proof}
First let us show that $\leq$ is transitive. Assume that $A \leq B$ and $B\leq C$, that is $A^+ \hvD B^-$ and $B^+ \hvD C^-$. If $B\in \P$ then $B\hvD C^-$, which is equivalent to $\sbuar B \abvD C^-$ and hence $A^+ \hvD C^-$, i.e. $A\leq C$. If $B\in \N$ then $A^+ \hvD B$, which is equivalent to $ A^+\arvD \srdar B$ and hence again we get $A \leq C$. It is easy to show that $\leq$ is reflexive, so $\leq$ is a pre-order.
\end{proof}

Now we define $\bbG_\bbA$ based on $\pi(\bbA)$ by taking the quotient over $\leq\cap\geq$. Since the operations on $\pi(\bbA)$ are monotone and antitone, $\leq\cap\geq$ is in fact a congruence relation and the operations on $\bbG_\bbA$ can be defined in the usual way. 

\begin{proposition}
For every $\FPLG$ algebra $\bbA$, $\bbG_\bbA$ is an LG-algebra.
\end{proposition}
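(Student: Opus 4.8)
The plan is to verify the two clauses of Definition \ref{def:LGAlgebra}: that $\bbG_\bbA$ is a partially ordered algebra whose six operations are compatible with $\leq$, and that the residuation laws \eqref{eq:adj-lg} hold. The poset structure comes essentially for free. By the previous proposition $\leq$ is a pre-order on $\pi(\bbA)$, and quotienting by the congruence ${\leq} \cap {\geq}$ (which is well-defined on the operations because they are monotone and antitone, as already noted) turns $\leq$ into a partial order on $\bbG_\bbA$ in the standard way. Compatibility of the operations with $\leq$ is inherited from the fact that the $\FPLG$-operations are order-preserving/reversing according to their order-types \eqref{eq:order-types} and that the embeddings $(\cdot)^+$ and $(\cdot)^-$ are monotone; alternatively it follows a posteriori once the residuation laws are in place.

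The real content is the residuation laws, and the observation that makes them routine is that each LG-operation of $\pi(\bbA)$ takes values in a \emph{pure} sort: $\mand^\bbG, \mdlarr^\bbG, \mdrarr^\bbG$ land in $\P$ and $\mor^\bbG, \mrarr^\bbG, \mlarr^\bbG$ land in $\N$, since the corresponding $\FPLG$-operations have pure codomains (see Definition \ref{def:fplg}). Consequently, on a value $D \in \P$ of such an operation the embedding $(\cdot)^+$ is simply the inclusion $\P \hookrightarrow \gP$, and on a value $E \in \N$ the embedding $(\cdot)^-$ is the inclusion $\N \hookrightarrow \gN$. Hence an inequality of $\bbG_\bbA$ whose two sides are values of these operations unfolds, via the definition $A \leq B \iff A^+ \hvD B^-$, directly into a relation $\hvD$ between a general positive and a general negative element of $\bbA$, with no residual shift left to unpack.

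From here each of the six equivalences in \eqref{eq:adj-lg} is a direct instance of an adjunction in \eqref{eq:adj-fplg}. For example, $A \mand^\bbG B \leq C$ means $(A^+ \mand^\bbA B^+) \hvD C^-$; by the middle tensor adjunction of \eqref{eq:adj-fplg} this is equivalent to $B^+ \hvD A^+ \mrarr^\bbA C^-$, which is precisely $B \leq A \mrarr^\bbG C$ because $A \mrarr^\bbG C = A^+ \mrarr^\bbA C^- \in \N$ is already pure, so $(A \mrarr^\bbG C)^- = A^+ \mrarr^\bbA C^-$. The equivalence $A \mand^\bbG B \leq C \iff A \leq C \mlarr^\bbG B$ uses the remaining tensor adjunction in the same way, and the co-tensor chain $C \mdlarr^\bbG B \leq A \iff C \leq A \mor^\bbG B \iff A \mdrarr^\bbG C \leq B$ follows identically from the second line of \eqref{eq:adj-fplg}. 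All six equivalences are stable under the quotient by ${\leq} \cap {\geq}$ since they are phrased entirely in terms of $\leq$, so they transfer verbatim from $\pi(\bbA)$ to $\bbG_\bbA$.

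I do not expect a genuine obstacle: the design work was already done in Definition \ref{def:fplg}, where the pure codomains of the LG-operations and the adjunctions \eqref{eq:adj-fplg} were arranged precisely so that the ambient LG residuation reappears on the pure part. The only point demanding care is the bookkeeping of the embeddings on the non-pure arguments (e.g.\ that $C^-$ reads as $\sbuar C$ when $C \in \P$), together with the elementary fact that applying an operation and then embedding its pure output back via $(\cdot)^+$ or $(\cdot)^-$ is the identity — which is exactly what licenses the clean reduction above and which should be recorded explicitly before running through the six cases.
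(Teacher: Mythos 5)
Your proof is correct and follows essentially the same route as the paper's: unfold $A \leq B$ into $A^+ \hvD B^-$, observe that the LG-operations land in the pure sorts so the embeddings act trivially on their outputs, and read off each residuation law from the heterogeneous adjunctions \eqref{eq:adj-fplg}. The paper simply works one representative sort-instance ($A, C \in \P$, $B \in \N$) explicitly and declares the rest analogous, whereas you make the uniform bookkeeping observation up front; the content is the same.
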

\begin{proof}
We need to show that the defined operations are residuated in each coordinate. Assume that $A\in \P, B\in\N$ and $C\in \P$:

\begin{equation}\notag
	\begin{array}{rcl}
		A \mand^{\bbG_\bbA} B \leq C & \text{ iff } & 
		A \mand^{\bbA} \srdar B \,\hvD\, \sbuar C \\
		& \text{ iff } & A \,\hvD\, \sbuar C \mlarr^{\bbA}\, \srdar B \\
		& \text{ iff } & A \leq C \mlarr^{\bbG_\bbA}\, B
	\end{array}
\end{equation}
and 
\begin{equation}\notag
	\begin{array}{rcl}
		A \mand^{\bbG_\bbA} B \leq C & \text{ iff } & 
		A \mand^{\bbA} \srdar B \,\hvD\, \sbuar C \\
		& \text{ iff } & \srdar B \,\hvD\, A \mrarr^{\!\bbA}\,\sbuar C\\
		& \text{ iff } & B \leq A \mrarr^{\!\bbG_\bbA}\, C
	\end{array}
\end{equation}
The rest of the cases are done analogously.
\end{proof}	

\begin{theorem}[Completeness and Soundness]
The system \fDLG is sound and complete with respect to LG-algebras.
\end{theorem}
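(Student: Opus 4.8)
The plan is to exploit the soundness and completeness of \fDLG with respect to $\FPLG$ (Theorems~\ref{thm:soundness} and~\ref{thm:completeness}) together with the two translations $\bbG \mapsto \bbA_\bbG$ and $\bbA \mapsto \bbG_\bbA$ constructed above, using them as a semantic bridge between LG-algebras and $\FPLG$-algebras. Concretely, soundness is obtained by pushing a derivable sequent through $\bbG \mapsto \bbA_\bbG$, and completeness by pulling a universally valid sequent back through $\bbA \mapsto \bbG_\bbA$.

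For soundness, suppose a sequent $S$ is derivable in \fDLG and let $\bbG$ be an arbitrary LG-algebra. By the preceding proposition $\bbA_\bbG$ is an $\FPLG$-algebra, so by Theorem~\ref{thm:soundness} the interpretation of $S$ in $\bbA_\bbG$ is valid. Now, by the very definition of $\bbA_\bbG$ -- where each weakening relation is the order $\leq$ of $\bbG$ and each operation $\star^{\bbA_\bbG}$ is the corresponding $\star^\bbG$ -- validity of $S$ in $\bbA_\bbG$ coincides with validity of the LG-reading of $S$ in $\bbG$. Since $\bbG$ was arbitrary, $S$ is valid in every LG-algebra.

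For completeness, suppose $S$ is valid in every LG-algebra; I want to conclude that its standard sequent is \fDLG-derivable, and by Theorem~\ref{thm:completeness} it suffices to show $S$ is valid in every $\FPLG$-algebra. Fix an $\FPLG$-algebra $\bbA$. By the preceding proposition $\bbG_\bbA$ is an LG-algebra, so by hypothesis $S$ is valid in $\bbG_\bbA$; it then remains to transfer this validity back to $\bbA$. This is where the construction of $\bbG_\bbA$ via $\pi(\bbA)$ is used: its order is defined by $A \leq B$ iff $A^+ \hvD B^-$, so the single order of $\bbG_\bbA$ is exactly the collage weakening relation $\hvD$ of $\bbA$ read through the embeddings $(\cdot)^+$ and $(\cdot)^-$. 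Using Proposition~\ref{prop:WeakeningOnCollagePoset} and the heterogeneous adjunctions of Definition~\ref{def:fplg}, the interpretation of each turnstile of $S$ in $\bbA$ is rewritten as such an $\hvD$-statement between collage elements; since the quotient defining $\bbG_\bbA$ is taken over the congruence $\leq\cap\geq$, the operations descend and the $\hvD$-statement holds in $\bbA$ precisely when the corresponding $\leq$-statement holds in $\bbG_\bbA$. Hence $S$ is valid in $\bbA$, and as $\bbA$ was arbitrary, $S$ is valid in all $\FPLG$-algebras, whence Theorem~\ref{thm:completeness} yields derivability of its standard sequent.

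The main obstacle is precisely this transfer step in the completeness half. The algebras $\bbA$ and $\bbG_\bbA$ are genuinely different: passing through $\pi(\bbA)$ collapses the four sorts into one and the several weakening relations into a single order, and the shifts need not be order-embeddings -- by design their composite is only a closure or interior operator, never assumed to be the identity. One must therefore verify that the reduction of each sorted turnstile of $S$ to a collage statement about $\hvD$ is \emph{faithful}. This is immediate for the neutral reading (a pure-positive-over-pure-negative sequent), where validity in $\bbA$ is literally the $\hvD$-statement matching $\leq$ in $\bbG_\bbA$; the care is needed for the purely positive or purely negative turnstiles, where the non-injectivity of the shifts could in principle weaken the statement, so these have to be brought into the neutral (collage) form before the correspondence is applied.
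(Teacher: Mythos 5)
Your proposal is correct and follows essentially the same route as the paper: both directions are obtained by bridging through the $\FPLG$ soundness and completeness theorems, using $\bbA_\bbG$ to transfer validity to an arbitrary LG-algebra and $\bbG_\bbA$ (built from the pre-order $\pi(\bbA)$ whose order is the collage relation $\hvD$ read through $(\cdot)^+$ and $(\cdot)^-$) to transfer validity back to an arbitrary $\FPLG$-algebra. The caveat you raise about non-neutral turnstiles and the non-injectivity of the shifts is exactly what the paper addresses by restricting to formulas in ``normal form'' (outermost connective binary) before applying the correspondence.
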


First let us define a translation of \emph{formulas} of \fDLG into formulas of the language of LG-algebras. We do so recursively:

\begin{itemize}
	\item Positive and negative atoms are sent to atoms.
	\item For each binary connective $\star$ we define $\tau(A\star B)$ to be $\tau(A) \star \tau(B)$.
	\item Finally $\tau(\srdar A)$ and $\tau(\sbuar A)$ are defined to be $\tau(A)$.
\end{itemize}

Let $t$ be an arbitrary turnstile in the language of \fDLG and let $w$ be an arbitrary weakening relation of an $\FPLG$ algebra. We will show that any sequent $A \ t\  B$ is provable in \fDLG if and only if $\tau(A) \vdash\tau(B)$ is provable in the logic of LG-algebras. Since \fDLG is sound and complete with respect to $\FPLG$ algebras it is enough to show that the sequent is falsified in an  $\FPLG$ algebra if and only if it's translation is falsified in an LG-algebra. 

Assume $\bbG\nvDash \tau(A)\leq \tau(B)$. Then it is immediate that $\bbA_\bbG \nvDash A \ w\  B$ (since $\srdar$ and $\sbuar$ are essentially `identity maps', given that we defined shifts as maps sending an element to its copy in the appropriate sub-algebra of $\bbA_\bbG$).

For the opposite direction first we make a distinction. We call a formula in normal form, if the outermost connective is binary. It's immediate that it's enough to restrict ourselves to normal form formulas and show that if $\bbA\nvDash A \ w\  B$ then $\bbG_\bbA\nvDash \tau(A)\leq \tau(B)$. Notice that if in $\pi(\bbA)$, it is the case that $C\nleq D$ then so is the case in $\bbG_\bbA$. So assume  that  $\bbA\nvDash A \ w\  B$. Then by definition $\pi(\bbA)\nvDash \tau(A) \leq \tau(B)$. This in turn implies that $\bbG_\bbA\nvDash \tau(A) \leq \tau(B)$. This concludes the proof.

\section{Canonical cut-elimination}
\label{ap:heterogeneous-dc}

We show here that the class of multi-type proper display calculi \cite{FriGreKurPalSik16} can be extended to include calculi involving heterogeneous sequents, and that \fDLG belongs to that class.

\begin{remark}
For simplicity, here we confine ourselves to `minimal' multi-type proper display calculi with heterogeneous sequents, i.e.~calculi where the only structural rules are display postulates and cuts. If we admit additional structural rules in the neutral phase, then some additional care is needed. We leave this as future work. 
\end{remark}

Eliminating a parametric (possibly heterogeneous) cut amounts to be able to substitute a formula of a sort $s$ by any structure of another (possibly different) sort $s'$, and keeping derivability. As structural connective arguments have a fixed sort, substitution may lead to a clash of sorts.

To illustrate how this has to work, let us take the example of \eqref{eq:ex-cut-elim}, where we want to move up the cut on the derivable sequent $p \MAND (\srdar p \mrarr n) \rvvd \srdar n$ to the uppermost occurrence of $\srdar n$ in $\pi_2$ (see \eqref{eq:ex-pi2}). This transformation requires to substitute every parametric occurrence of $\srdar n$ in $\pi_2$ by $p \MAND (\srdar p \mrarr n)$, which is still positive but pure. The problem is that there is an occurrence of $\srdar n$ under $\BUARR$, and that this connective only takes shifted structures as argument. Therefore, we have to also mutate (i.e. convert) $\BUARR$ into $\SBUARR$ so that the sequent stays well-formed. We can check that the instances of rule $\BUARR \dashv \SRDARR$ are changed into instances, that turn out to be instances of rule $\SBUARR \dashv \SRDARR$. In other words, the mutation $\BUARR \mapsto \SBUARR$ preserves the derivability. The result of the parametric move is shown in \eqref{eq:ex-cut-elim-result}

The mutation generated by \eqref{eq:ex-cut-elim} also has an impact on $\MAND$, because $\srdar n$ appears as an argument of $\MAND$ in $\pi_2$. However, as this connective accepts shifted as well as pure arguments, it does not have to mutate, or equivalently, it mutates into itself $\MAND \mapsto \MAND$. Last but not least, turnstiles also have to mutate. From \eqref{eq:ex-cut-elim} to \eqref{eq:ex-cut-elim-result}, we have the following convertions: $\nvvd \mapsto \nvd$, $\nrvvvd \mapsto \nrvvd$ and $\nbvd \mapsto \nbvvd$. This example is the pattern $(\Pos, \Shifted) \xrightarrow{pre} (\Pos, \Pure)$ contained in $\mu_{\nrvvd, \nbvvd}$ \eqref{eq:mu-rvvd-bvvd} of proposition \ref{prop:mutations-fdlg}.

\begin{equation}\label{eq:ex-cut-elim}
    \AXC{$\pi$}
    \noLine
    \UIC{$p \MAND (\srdar p \mrarr n) \rvvd \srdar n$}
        \AXC{$\pi_2$}
        \noLine
        \UIC{$\srdar n \tvvd \sbuar (\srdar n \mand p) \MLARR p$}
    \RL{P-Cut}
    \BIC{$p \MAND (\srdar p \mrarr n) \vd \sbuar (\srdar n \mand p) \MLARR p$}
    \DP
\end{equation}

\begin{equation}\label{eq:ex-pi2}
    \pi_2 = \qquad
    \AXC{$\pi_{2.1}$}
    \noLine
    \UIC{$\srdar n \rvvvd \SRDARR n$}
    \noLine
    \UIC{$\vdots$}
    \noLine
    \UIC{$\srdar n \MAND p \vvdt \sbuar (\srdar n \mand p)$}
    \RL{$\MAND \dashv \MLARR$}
    \UIC{$\srdar n \tvvd \sbuar (\srdar n \mand p) \MLARR p$}
    \RL{$\SRDARR$}
    \UIC{$\srdar n \rvvvd \SRDARR (\sbuar (\srdar n \mand p) \MLARR p)$}
    \RL{$\BUARR \dashv \SRDARR$}
    \UIC{$\BUARR \srdar n \bvd \sbuar (\srdar n \mand p) \MLARR p$}
    \RL{$\BUARR \dashv \SRDARR$}
    \UIC{$\srdar n \rvvvd \SRDARR (\sbuar (\srdar n \mand p) \MLARR p)$}
    \RL{$\SRDARR$}
    \UIC{$\srdar n \tvvd \sbuar (\srdar n \mand p) \MLARR p$}
    \DP
\end{equation}

\begin{equation}\label{eq:ex-cut-elim-result}
    \rightsquigarrow \qquad
    \AXC{$\pi$}
    \noLine
    \UIC{$p \MAND (\srdar p \mrarr n) \rvvd \srdar n$}
        \AXC{$\pi_{2.1}$}
        \noLine
        \UIC{$\srdar n \rvvvd \SRDARR n$}
    \RL{P-Cut}
    \BIC{$p \MAND (\srdar p \mrarr n) \rvvd \SRDARR n$}
    \noLine
    \UIC{$\vdots$}
    \noLine
    \UIC{$(p \MAND (\srdar p \mrarr n)) \MAND p \vvdt \sbuar (\srdar n \mand p)$}
    \RL{$\MAND \dashv \MLARR$}
    \UIC{$p \MAND (\srdar p \mrarr n) \vd \sbuar (\srdar n \mand p) \MLARR p$}
    \RL{$\SRDARR$}
    \UIC{$p \MAND (\srdar p \mrarr n) \rvvd \SRDARR (\sbuar (\srdar n \mand p) \MLARR p)$}
    \RL{$\SBUARR \dashv \SRDARR$}
    \UIC{$\SBUARR (p \MAND (\srdar p \mrarr n)) \bvvd \sbuar (\srdar n \mand p) \MLARR p$}
    \RL{$\SBUARR \dashv \SRDARR$}
    \UIC{$p \MAND (\srdar p \mrarr n) \rvvd \SRDARR (\sbuar (\srdar n \mand p) \MLARR p)$}
    \RL{$\SRDARR$}
    \UIC{$p \MAND (\srdar p \mrarr n) \vd \sbuar (\srdar n \mand p) \MLARR p$}
    \DP
\end{equation}

We call $\S_{\F}$ (resp. $\S_{\G}$) the set of structural $\F$-connectives (resp. $\G$-connectives), $\S = \S_{\F} \cup \S_{\G}$ (resp. $\S_n$ for connectives of arity $n \geq 0$) and $\T$ the set of turnstiles. The sort-position function $\sortpst$ maps every structural connective and turnstile to its nonempty vector on $\Sort \times \Pst$, where $\Sort = \{\Pos, \Neg\} \times \{\Pure, \Shifted\}$ is the set of sorts and $\Pst = \{pre, suc\}$ the set of positions\footnote{Recall that the $i$-th position of a structural connective $H$ ($0 \leq i \leq \ar(H)$) is given by: $\pst(H,i) = pre$ iff: (i) $i = 0$ and $H \in S_{\F}$, or (ii) $h \in S_{\F}$ and $\epsilon(h,i) = 1$, or (iii) $h \in S_{\G}$ and $\epsilon(h,i) = \partial$.}. The initial pair of sort and position stands for the target of the structural connective, e.g. $\sortpst(\SBUARR) = \langle (\Neg,\Shifted), (\Pos,\Pure) \rangle$. For a turnstile $t$ from sort $s$ to $s'$, we set $\sortpst(t) = \langle (s, pre), (s', suc) \rangle$.

\begin{definition}
A \textbf{mutation} $\mu$ is a function $\mu: \Sort \times \Pst \to \Sort \times \Pst$ together with two other functions (called identically) $\mu : \cup_n (\S_n \times \wp (\llbracket 1, n \rrbracket) \to S \times \wp(\{0\}))$ and $\mu: \T \times \wp (\llbracket 1, 2 \rrbracket) \to \T$ such that:

\begin{enumerate}
    \item for all $(s,d) \in \Sort \times \Pst$, if $\mu(s,d) = (s',d')$ then $d = d'$
    
    \item for all $(H, I) \in \S$, and $\mu(H,I) = (H',I')$ $\sortpst(H') = \mu_{I \cup I'}(\sortpst(H))$
    
    \item for all $(t, I) \in \T$, $\sortpst(\mu(t,I)) = \mu_I(\sortpst(t))$
\end{enumerate}

where $\mu_I(w_0...w_n) = w_0'...w_n'$ with
\begin{math}
w_i' = \ite{\mu(w_i)}{if $i \in I$}{w_i}{if $i \not\in I$}
\end{math}

Moreover, we say that $\mu$ contains a pattern $s \xrightarrow{d} s'$ if $\mu(s,d) = (s',d)$.
\end{definition}

The input set $I \subseteq \llbracket 1, n \rrbracket$ stands for the arguments that have to be mutated. In the above example, we ask for $\mu(\MAND,\{1\})$ because only the first argument of $\MAND$ contains a parametric occurrence of $\srdar n$. Whether the target (index $0$) changes its sort is up to $\mu$. That's why $\mu(H,I)$ has to produce either $\emptyset$ (not changing the target sort of $H$) or $\{0\}$ (possibly changing the target sort of $H$) as additional output. In the example, we have $\mu(\BUARR, \{1\}) = (\SBUARR, \{0\})$, but $\mu(\MAND, \{1\}) = (\MAND, \emptyset)$ stops there the propagation of the need to mutate structural connectives (i.e. the propagation of the $*$ label in definition \ref{def:mutation-sequent}), and thus turnstile $\vvdn$ does not have to be mutated.

The combinatorial behaviour of mutations can be better understood if we see the set of sorts (in Fig.~\ref{fig:fplg-wr} for \fDLG) as a (thin small) category where weakening relations are morphisms and orders the identities. Given a category $\C$, a morphism $f : A \to B$ acts on morphisms $g : C \to A$ by post-composition $f \circ g : C \to B$ (succedent mutation) and on morphisms $g : B \to D$ by pre-composition $g \circ f : A \to D$ (precedent mutation). The action of identities is the identity.

Mutations can extend to sequents and rules, given a set of congruent structure occurrences.

\begin{definition}\label{def:mutation-sequent}
Fix a mutation $\mu$, a sequent $S$ and a set of formula occurrences $(A_j)_j$ of $S$. For any structures $(\Psi_j)_j$ such that for all $j$, $\langle \sort(\Psi_j), \pst(A_j) \rangle = \mu( \sortpst(A_j) )$, we define the uniform substitution of $(A_j)_j$ by $(\Psi_j)_j$  with mutation $\mu$ on $S$ as follows:
\begin{enumerate}
    \item Take the generation tree of $S$ (its root is the turnstile) and proceed inductively, starting from the deepest nodes.
    \begin{enumerate}
        \item  Substitute every $A_j$ by $\Psi_j$ and label it by $*$ if $\mu(\sortpst(A_j)) \neq \sortpst(A_j)$.
        \item For every internal structural node $H$, by calling $I$ its children who are labelled by $*$, replace $H$ by $H'$ from $\mu(H,I) = (H',I')$, and label it by $*$ if $0 \in I'$ and $\mu(\sortpst(H,0)) \neq \sortpst(H,0)$
        \item Similarly for the turnstile, turn $t$ into $\mu(t,I)$.
    \end{enumerate}
\end{enumerate}
The new sequent $\mu_{[(\Psi_j)_j / (A_j)_j]}(S)$ is well-formed.

On any rule $R$ we define its mutation $\mu_{[(\Psi_j)_j / (A_j)_j]}(S)$ by applying the uniform substitution with mutation to every premise and the conclusions.
\end{definition}

In practice, if $I = \emptyset$ for some connective $H$, $\mu((H,I))$ is supposed to be $(H, \emptyset)$. 

\begin{definition}
\label{def:HeterogeneousDC}
We adapt the conditions of \cite{FriGreKurPalSik16} to define the class of \emph{heterogeneous} multi-type proper display calculi by removing $C_9$ and modifying $C_6'$, $C_7'$ and $C_{10}$ as follows:

\begin{enumerate}
    \item[$C_6''$] (Closure under precedent mutations) For every derivable turnstile\footnote{By derivable turnstile, we mean that there is a derivable sequent on that turnstile.} $t \in \T$ of sort $\sort(t) = (s,s')$, there exists a mutation $\mu$ containing a pattern $s' \xrightarrow{pre} s$ such that the mutation of every rule wrt. $\mu$ is derivable in the calculus.
    
    \item[$C_7''$] (Closure under succedent mutations) For every derivable turnstile $t \in \T$ of sort $\sort(t) = (s,s')$, there exists a mutation $\mu$ containing a pattern $s \xrightarrow{suc} s'$ such that the mutation of every rule wrt. $\mu$ is derivable in the calculus.
    
    \item[$C_{10}'$] (Closure under turnstile composition) For every turnstiles $t, t' \in \T$ such that $\sort(t) = (s, s')$ and $\sort(t') = (s', s'')$, there exists a turnstile $t'' \in \T$ such that the following cut is definable and belongs to the system:
    
    \begin{center}
        \AXC{$\Psi \t A$}
        \AXC{$A \t['] \Phi$}
        \RL{$tt'$-Cut}
        \BIC{$\Psi \t[''] \Phi$}
        \DP
    \end{center}
    
    \item[$C_{10}''$] (Uniqueness of turnstiles) There is at most one turnstile by pair of sorts.
\end{enumerate}
\end{definition}


\begin{theorem}[\textit{Canonical cut-elimination}]\label{thm:canonical-cut-elim}
Any multi-type heterogeneous sequent calculus enjoys cut-elimination.
\end{theorem}

\begin{proof}
We follow the proof of \cite{FriGreKurPalSik16} and we only expand on the parts of which the proof departs from it, assuming principal formulas are in display. In the parametric move, we are in the following situation:

\def\fCenter{}
\begin{center}
    \AX$ \vdots\fCenter~ \pi$
    \noLine
    \UI$\Psi \ t_1\fCenter\  A$
        \AX$ \vdots\fCenter~ \pi_{2.1}$
        \noLine
        \UI$(A_1 \ t_{2.1}\fCenter\ \Phi_1)$
            \AX$\hdots\fCenter$
                \AX$ \vdots\fCenter~ \pi_{2.n}$
                \noLine
                \UI$(A_n \ t_{2.n}\fCenter\ \Phi_n)$
        \noLine
        \TrinaryInf$\ddots \vdots\fCenter \iddots~ \pi_2$
        \noLine
        \UI$A \ t_2\fCenter\ \Phi$
    \RL{$t_1t_2$-Cut}
    \BI$\Psi \ t_3\fCenter\ \Phi$
    \DP
\end{center}

with the $A_i$s being the uppermost congruent occurrences of $A$. We treat the case of $A_i$ and when it is principal (case (1)). Call $s'$ the sort of $\Psi$ and $s$ the sort of $A$. By $C_2'$, $A_i$ is also of sort $s$.

By $C_{10}'$, the calculus contains the $t_1t_{2.1}$-cut. By $C_6''$, there exists a mutation $\mu$ containing a pattern $s \xrightarrow{pre} s'$. For every rule $R$ in the section $\pi_2$, we track the congruent occurrences $(A_j)_j$ of $A$ in $R$ and substitute them uniformly by $\Psi$ with mutation, yielding $\mu_{[(\Psi)_j / (A_j)_j]}(R)$, which is derivable by $C_6''$. By a straightforward induction on $\pi_2$, its mutation $\mu_{[(\Psi)_j / (A_j)_j]}(\pi_2)$ is a well-formed proof of end sequent $\mu_{[(\Psi)_j / (A_j)_j]}(A \t[_2] \Phi) = \Psi \t[_2'] \Phi$, and $t_2' = t_3$ by $C_{10}''$. Moreover, we can cut on $A_1 \t[_{2.i}] \Psi_1$ with $\Psi \t[_1] A$ thanks to $C_{10}'$, and $t_1 \circ t_{2.i} = \mu(t_{2.i})$ by $C_{10}''$.

\def\fCenter{}
\begin{center}
    \AX$ \vdots\fCenter~ \pi$
    \noLine
    \UI$\Psi \ t_1\fCenter\ A$
        \AX$ \vdots\fCenter~ \pi_{2.i}$
        \noLine
        \UI$A_i \ t_{2.i}\fCenter\ \Phi_i$
        \noLine
        \UI$\vdots\fCenter~ \pi_2$
        \noLine
        \UI$A \ t_2\fCenter\ \Phi$
    \RL{$t_1 t_2$-Cut}
    \BI$\Psi \ t_3\fCenter\ \Phi$
    \DP
    \qquad $\rightsquigarrow$ \qquad
    \AX$ \vdots\fCenter~ \pi$
    \noLine
    \UI$\Psi \ t_1\fCenter\ A$
        \AX$ \vdots\fCenter~ \pi_{2.i}$
        \noLine
        \UI$A_i \ t_{2.i}\fCenter\ \Phi_i$
    \RL{$t_1 t_{2.i}$ Cut}
    \BI$\Psi \ t_1 \circ \fCenter t_{2.1}\  \Phi_i$
    \noLine
    \UI$\vdots\fCenter~ \mu_{[(\Psi)_j/ (A_j)_j]}(\pi_2)$
    \noLine
    \UI$\Psi \ t_3\fCenter\ \Phi$
    \DP
\end{center}

Case (2) of \cite{FriGreKurPalSik16} works similarly.
\end{proof}

\begin{proposition}\label{prop:mutations-fdlg}
\fDLG satisfies conditions $C_6''$ and $C_7''$.
\end{proposition}

\begin{proof}
The 4 mutations\footnote{They could be reorganised differently. We choose to regroup them as much as possible.} of \fDLG are the following (using the symbols of $\FPLG$ instead of explicit sorts, the $I$ and $I'$ are left implicit):

\begin{itemize}
    \item $\mu_{\nrvd, \nrvvvd, \nbvd, \nbvvvd}$ is the identity
    
    \item $\mu_{\nrvvd, \nbvvd}$ :
    
\begin{equation}\label{eq:mu-rvvd-bvvd}
\begin{array}{cccccc}
     \sP \xrightarrow{pre} \P & \P \xrightarrow{suc} \sP &
     \N \xrightarrow{pre} \sN & \sN \xrightarrow{suc} \N \\
     
     \nrvvvd \mapsto \nrvvd & \nrvd \mapsto \nrvvd &
     \nbvd \mapsto \nbvvd & \nbvvvd \mapsto \nbvvd &
     \nvvd \mapsto \nvd & \vvdn \mapsto \nvd \\
     
     \BUARR \mapsto \SBUARR & \BUARR \dashv \SRDARR \mapsto \SBUARR \dashv \SRDARR &
     \RDARR \mapsto \SRDARR & \SBUARR \dashv \RDARR \mapsto \SBUARR \dashv \SRDARR \\
\end{array}
\end{equation}

    $\mu_{\nrvvd, \nbvvd}$ acts on the rest like the identity or doesn't change the connective's name (thanks to name overloading, e.g. $\MAND$ taking either pure or shifted structures as input).
    
    \item $\mu_{\nvvd, \vvdn}$ :
    
\begin{equation}
\begin{array}{cccccc}
     \P \xrightarrow{suc} \sN & \sP \xrightarrow{suc} \N & &
     \N \xrightarrow{pre} \sP & \sN \xrightarrow{pre} \P \\
     
     \nrvd \mapsto \vvdn & \nrvvd \mapsto \nvd & \nrvvvd \mapsto \nvvd &
     \nbvd \mapsto \nvvd & \nbvvd \mapsto \nvd & \nbvvvd \mapsto \vvdn \\
     
     \text{P-Cut} \mapsto \text{PB-Cut} &  &
     \text{P-Cut} \mapsto \text{PB-Cut} &  \\
\end{array}
\end{equation}

    $H_{\ell} \mapsto H$ and $H_r \mapsto H$ for every LG connective $H$. $\mu_{\nvvd, \vvdn}$ acts on the rest like the identity or doesn't change the connective's name
    
    \item $\mu_{\nvd}$ :
    
\begin{equation}
\begin{array}{cc}
     \P \xrightarrow{suc} \N & \N \xrightarrow{pre} \P  \\
     
     \nrvd \mapsto \nvd & \nbvd \mapsto \nvd \\
     
     \text{P-Cut} \mapsto \text{PB-Cut} &
     \text{P-Cut} \mapsto \text{PB-Cut}\\
\end{array}
\end{equation}

     $\mu_{\nvd}$ acts on the rest like identity or doesn't change the connective's name
\end{itemize}
\end{proof}

\begin{theorem}\label{thm:fdlg-is-d-caluclus}
\fDLG is a heterogeneous multi-type proper display calculus.
\end{theorem}

\begin{proof}
\fDLG clearly enjoys condition $C_2$ -- $C_5$, and $C_8$ because it is fully residuated. New conditions $C_{10}'$ and $C_{10}''$ are also easily checked. Finally, $C_6''$ and $C_7''$ hold thanks to proposition \ref{prop:mutations-fdlg}.
\end{proof}

\begin{corollary}
\fDLG enjoys canonical cut-elimination.
\end{corollary}

\section{Proof translations}
\label{sec:ProofTranslations}

The rules of the minimal calculus \fLG are provided in Section 3.1 of \cite{Moortgat--Moot:2011}. Here are the translations between \fDLG and \fLG. In the following, we assume that in \fLG, axioms are only possible on atomic formulas. The results remain valid when axioms on every formula are allowed, but $\FtoT[]{\cdot}$ would involve a structural normalization using $\Ftom{\cdot}$ and $\FtoM{\cdot}$ transformations.

\subsection{From \fLG to \fDLG}

\begin{definition}\label{def:polar-form}
Given a \fLG formula $A$ its positive polarization $\FtoT[\PF]{A}$ (resp. negative polariazation $\FtoT[\NF]{A}$) is a positive (resp. negative) \fDLG-formula defined by \eqref{eq:polar-form}. Moreover, we have that $\FtoT[\PF]{A}$ (resp. $\FtoT[\NF]{A}$) is pure iff $A$ is \fLG-positive (resp. \fLG-negative).

\begin{equation}\label{eq:polar-form}
\arraycolsep=0.7mm
\begin{array}{rclrclrclrcl}
     \FtoT[\PF]{A \mand B} &=& \FtoT[\PF]{A} \mand \FtoT[\PF]{B} &
     \FtoT[\PF]{A \mdlarr B} &=& \FtoT[\PF]{A} \mdlarr \FtoT[\NF]{B} &
     \FtoT[\PF]{A \mdrarr B} &=& \FtoT[\NF]{A} \mdrarr \FtoT[\PF]{B} &
     \FtoT[\PF]{p} &=& p \\
     
     \FtoT[\PF]{A \mor B} &=& \srdar (\FtoT[\NF]{A} \mor \FtoT[\NF]{B}) &
     \FtoT[\PF]{A \mrarr B} &=& \srdar (\FtoT[\PF]{A} \mrarr \FtoT[\NF]{B}) &
     \FtoT[\PF]{A \mlarr B} &=& \srdar (\FtoT[\NF]{A} \mlarr \FtoT[\PF]{B}) &
     \FtoT[\PF]{n} &=& \srdar n \\[2mm]
     
     \FtoT[\NF]{A \mand B} &=& \sbuar (\FtoT[\PF]{A} \mand \FtoT[\PF]{B}) &
     \FtoT[\NF]{A \mdlarr B} &=& \sbuar (\FtoT[\PF]{A} \mdlarr \FtoT[\NF]{B}) &
     \FtoT[\NF]{A \mdrarr B} &=& \sbuar (\FtoT[\NF]{A} \mdrarr \FtoT[\PF]{B}) &
     \FtoT[\NF]{p} &=& \sbuar p \\
     
     \FtoT[\NF]{A \mor B} &=& \FtoT[\NF]{A} \mor \FtoT[\NF]{B} &
     \FtoT[\NF]{A \mrarr B} &=& \FtoT[\PF]{A} \mrarr \FtoT[\NF]{B} &
     \FtoT[\NF]{A \mlarr B} &=& \FtoT[\NF]{A} \mlarr \FtoT[\PF]{B} &
     \FtoT[\NF]{n} &=& n \\
\end{array}
\end{equation}
\end{definition}

\begin{definition}\label{def:trans-str-seq}
Given an input structure $X$ (resp. an output structure $\Delta$), $\FtoT[\PS]{X}$ (resp. $\FtoT[\NS]{\Delta}$) is a positive structure (resp. negative structure) of \fDLG defined by \eqref{eq:trans-str} without structural shift or $\ell,r$-variant. The translation of a \fLG-sequent into a \fDLG-sequent is given by \eqref{eq:trans-seq}.

\begin{equation}\label{eq:trans-str}
\arraycolsep=0.7mm
\begin{array}{rclrclrclrcl}
     \FtoT[\PS]{X \MAND Y} &=& \FtoT[\PS]{X} \MAND \FtoT[\PS]{Y} &
     \FtoT[\PS]{X \MDLARR \Delta} &=& \FtoT[\PS]{X} \MDLARR \FtoT[\NS]{\Delta} &
     \FtoT[\PS]{\Delta \MDRARR Y} &=& \FtoT[\NS]{\Delta} \MDRARR \FtoT[\PS]{Y} &
     \FtoT[\PS]{A} &=& \FtoT[\PF]{A} \\
     
     \FtoT[\NS]{\Delta \MOR \Gamma} &=& \FtoT[\NS]{\Delta} \MOR \FtoT[\NS]{\Gamma} &
     \FtoT[\NS]{X \MRARR \Delta} &=& \FtoT[\PS]{X} \MRARR \FtoT[\NS]{\Delta} &
     \FtoT[\NS]{\Delta \MLARR Y} &=& \FtoT[\NS]{\Delta} \MLARR \FtoT[\PS]{Y} &
     \FtoT[\NS]{A} &=& \FtoT[\NF]{A} \\
\end{array}
\end{equation}

\begin{equation}\label{eq:trans-seq}
\arraycolsep=1.5mm
\begin{array}{rclcrclcrcl}
     \FtoT[]{X \vdash \focus{A}} &=&
     \FtoT[\PS]{X} \rvD \FtoT[\PF]{A} &~~&
     
     \FtoT[]{\focus{A} \vdash \Delta} &=&
     \FtoT[\NF]{A} \bvD \FtoT[\NS]{\Delta} &~~&

    \FtoT[]{X \vdash \Delta} &=&
    \FtoT[\PS]{X} \vD \FtoT[\NS]{\Delta}
\end{array}
\end{equation}
\end{definition}

\begin{definition}
A \fDLG-sequent $S$ is called \textbf{normal} if there exists a \fLG-sequent $S'$ such that $S = \FtoT[]{S'}$.
\end{definition}

By the remark in definition \ref{def:trans-str-seq}, normal sequents do not contain structural shifts or $\ell,r$-variants. Therefore, a normal sequent is either positive focused, negative focused or neutral (non-focused).

\begin{theorem}\label{thm:trans}
Every \fLG-derivation of a sequent $S$ can be translated into a \fDLG-derivation of $\FtoT[]{S}$.
\end{theorem}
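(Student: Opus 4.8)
The plan is to argue by induction on the structure of the given \fLG-derivation, exhibiting for each \fLG-rule a \fDLG-derivation fragment whose leaves are the $\FtoT[]{\cdot}$-translations of the premises and whose root is the $\FtoT[]{\cdot}$-translation of the conclusion. Because the translation acts homomorphically on structures (Definition~\ref{def:trans-str-seq}) and, by the clause of Definition~\ref{def:polar-form} governing purity, sends every \fLG-sequent to a well-formed \fDLG-sequent of the expected shape (positive focused, negative focused, or neutral), it suffices to treat each rule schema of \fLG in isolation and let the induction glue the fragments together. The base case is immediate: the atomic axioms $p \vdash \focus{p}$ and $\focus{n} \vdash n$ translate, via $\FtoT[\PF]{p} = p$ and $\FtoT[\NF]{n} = n$, to the \fDLG-axioms $p \rvd p$ and $n \bvd n$.

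The logical rules split into two families. The synchronous (non-invertible) \fLG-rules --- the right rules for $\mand, \mdlarr, \mdrarr$ and the left rules for $\mor, \mrarr, \mlarr$ --- translate directly to the corresponding \fDLG tonicity rules: the maps $\FtoT[\PF]{\cdot}$ and $\FtoT[\NF]{\cdot}$ are defined to distribute over these connectives, and since the \fDLG tonicity rules range over general formulas $\GP, \GN$, they automatically absorb any shift that $\FtoT[]{\cdot}$ places on an immediate subformula of the opposite polarity. Dually, the asynchronous (invertible) \fLG-rules translate to the \fDLG translation rules $\mand_L, \mor_R$, and so on, while \fLG's residuation/display postulates (admitted only in its neutral phase) translate to \fDLG's display postulates for the LG-connectives, which remain available since \fDLG allows display in every phase. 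In each of these cases the matching fragment is a single \fDLG-rule, and --- crucially --- only variant-free, structural-shift-free rules are invoked, so these fragments stay within the image of $\FtoT[]{\cdot}$.

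The heart of the argument is the phase-transition (focus and release) rules, which is exactly where \fLG's silent polarity changes become explicit shift steps in \fDLG. A phase boundary in \fLG is precisely a point at which the translated formula acquires an outermost $\srdar$ or $\sbuar$, so each of the four transitions is simulated by composing one shift logical rule ($\srdar_R, \srdar_L, \sbuar_R, \sbuar_L$) with one application of the corresponding structural shift rule $\SRDARR$ or $\SBUARR$, run in the appropriate direction. For example, a right-release step --- whose premise translates to the neutral sequent $\FtoT[\PS]{X} \vd \FtoT[\NF]{N}$ and whose conclusion translates to the focused sequent $\FtoT[\PS]{X} \rvvd \srdar \FtoT[\NF]{N}$ --- is realised by first applying the structural rule $\SRDARR$ and then $\srdar_R$. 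A right-decision step runs the analogous composition in the opposite order: from $\FtoT[\PS]{X} \rvd \FtoT[\PF]{P}$ one applies $\sbuar_R$ to obtain $\SBUARR\, \FtoT[\PS]{X} \bvvvd \sbuar \FtoT[\PF]{P}$ and then uses invertibility of $\SBUARR$ to descend to the neutral sequent $\FtoT[\PS]{X} \vvdt \sbuar \FtoT[\PF]{P}$. The two left-hand transitions are handled symmetrically, using $\sbuar_L$ together with $\SBUARR$, and $\srdar_L$ together with $\SRDARR$.

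I expect the main obstacle to be precisely this phase-transition matching: for each of the four cases one must choose the correct order of the shift logical rule and the (invertible) structural shift rule, and then verify that every intermediate \fDLG-sequent is well-formed and sits on an admissible turnstile from \eqref{eq:turnstiles}. The invertibility of the structural shift rules is exactly what lets the simulation run in whichever direction the given \fLG-transition demands; the accompanying sort/turnstile bookkeeping is routine but is where the argument is most error-prone, so it is worth tabulating the four patterns explicitly. A secondary point to record is that, although these simulations pass through intermediate sequents carrying structural shifts such as $\SRDARR \FtoT[\NF]{N}$, the endpoints of every fragment are normal sequents --- free of structural shifts and of $\ell,r$-variants --- so the constructed derivation genuinely proves $\FtoT[]{S}$ and never requires the variant display postulates of \eqref{eq:display-postulates}.
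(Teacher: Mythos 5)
Your proposal is correct and follows essentially the same route as the paper's proof: an induction on the \fLG-derivation that sends axioms to axioms, synchronous rules to tonicity rules, asynchronous rules to translation rules, display postulates to themselves, and simulates each \fLG phase transition (e.g.\ $\mu^*$, $\tilde{\mu}$) by composing one shift logical rule with one invertible structural shift rule in the appropriate order. Your explicit identification of the four phase-transition patterns and the turnstile bookkeeping as the delicate point matches exactly where the paper concentrates its worked cases.
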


\begin{proof}
Set a proof $\pi$ of a sequent $S$. We translate every rule $R$ of $\pi$ by a rule $\FtoT[]{R}$ of \fDLG by induction on $\pi$. We only treat a sufficient sample of rules. The others are similar.

\begin{equation}\notag
\begin{array}{ccc}
    \left\lceil 
    \AXC{}
    \RL{Ax}
    \UIC{$p \vdash \focus{p}$}
    \DP \right\rceil 
& = &
    \AXC{}
    \RL{$Id$}
    \UIC{$p \rvd p$}
    \dashedLine
    \UIC{$\FtoT[\PS]{p} \rvd \FtoT[\PF]{p}$}
    \DP\\ &  & \\

    \left \lceil 
    \AXC{$X \vdash \focus{A}$}
    \AXC{$Y \vdash \focus{B}$}
    \RL{$\mand_R$}
    \BIC{$X \MAND Y \vdash \focus{A \mand B}$}
    \DP \right\rceil
& =&
    \AXC{$\FtoT[\PS]{X} \rvD \FtoT[\PF]{A}$}
    \AXC{$\FtoT[\PS]{Y} \rvD \FtoT[\PF]{B}$}
    \RL{$\mand_R$}
    \BIC{$\FtoT[\PS]{X} \MAND \FtoT[\PS]{Y} \rvd
        \FtoT[\PF]{A} \mand \FtoT[\PF]{B}$}
    \dashedLine
    \UIC{$\FtoT[\PS]{X \MAND Y} \rvd
    \FtoT[\PF]{A \mand B}$}
    \DP\\ &  & \\
    
    \left \lceil 
    \AXC{$A \MAND B \vdash \Delta$}
    \RL{$\mand_L$}
    \UIC{$A \mand B \vdash \Delta$}
    \DP \right\rceil
& =&
    \AXC{$\FtoT[\PS]{A \MAND B} \vD \FtoT[\NS]{\Delta}$}
    \dashedLine
    \UIC{$\FtoT[\PF]{A} \MAND \FtoT[\PF]{B} \vD \FtoT[\NS]{\Delta}$}
    \RL{$\mand_R$}
    \UIC{$\FtoT[\PF]{A} \mand \FtoT[\PF]{B} \vD \FtoT[\NS]{\Delta}$}
    \dashedLine
    \UIC{$\FtoT[\PS]{A \mand B} \vD \FtoT[\NS]{\Delta}$}
    \DP
\end{array}
\end{equation}

In the following, we assume that $A$ is \fLG-positive, i.e. begins by $\mand, \mdlarr, \mdrarr$ or a positive atom. From definition \ref{def:polar-form}, we clearly have that $\FtoT[\NF]{A} = \sbuar \FtoT[\PF]{A}$ with $\FtoT[\PF]{A}$ being positive pure.

\begin{equation}\notag
\begin{array}{ccc}
    \left \lceil 
    \AXC{$A \vdash \Delta$}
    \RL{$\tilde{\mu}$}
    \UIC{$\focus{A} \vdash \Delta$}
    \DP \right\rceil 
& = &
    \AXC{$\FtoT[\PF]{A} \vD \FtoT[\NS]{\Delta}$}
    \RL{$\SBUARR$}
    \UIC{$\SBUARR \FtoT[\PF]{A} \bvD \FtoT[\NS]{\Delta}$}
    \RL{$\sbuar_L$}
    \UIC{$\sbuar \FtoT[\PF]{A} \bvD \FtoT[\NS]{\Delta}$}
    \dashedLine
    \UIC{$\FtoT[\NF]{A} \bvD \FtoT[\NS]{\Delta}$}
    \DP \\ & & \\
    
    \left \lceil 
    \AXC{$X \vdash \focus{A}$}
    \RL{$\mu^*$}
    \UIC{$X \vdash A$}
    \DP \right\rceil
& = &
    \AXC{$\FtoT[\PS]{X} \rvd \FtoT[\PF]{A}$}
    \RL{$\sbuar_R$}
    \UIC{$\SBUARR \FtoT[\PS]{X} \bvvvd \sbuar \FtoT[\PF]{A}$}
    \RL{$\SBUARR$}
    \UIC{$\FtoT[\PS]{X} \vvdt \sbuar \FtoT[\PF]{A}$}
    \dashedLine
    \UIC{$\FtoT[\PS]{X} \vvdt \FtoT[\NS]{A}$}
    \DP
\end{array}
\end{equation}

Display postulates are translated by themselves. Note that translation $\FtoT[]{\cdot}$ is injective on derivations.
\end{proof}

\subsection{From \fDLG to \fLG}

We establish the form of \fDLG-derivable normal sequents in proposition \ref{prop:form-der-norm-seq} to translate them back to \fLG. We identify proof-sections containing non-normal sequents in proposition \ref{prop:proc-proof-sections} and give their translation back to \fLG in theorem \ref{thm:trans-b}.

\begin{definition}
The depolarization $\TtoF{A}$ of a \fDLG-formula $A$ is the \fLG-formula obtained by removing the shifts of $A$.
\end{definition}

\begin{definition}
Given a \fDLG-structure $\Psi$ without structural shift or $\ell,r$-variant, its translation $\TtoF{\Psi}$, obtained by depolarizing each formula occurring in $\Psi$, is a \fLG-structure. Moreover, if $\Psi$ is positive (resp. negative), then $\TtoF{\Psi}$ is an input (resp. output) structure.
\end{definition}

\begin{definition}
Given a normal sequent $S$ of \eqref{eq:form-der-norm-seq}, its translation $\TtoF{S}$ is given by \eqref{eq:trans-b-seq}.

\begin{equation}\label{eq:trans-b-seq}
    \TtoF{\GX \rvD \GP} = \TtoF{\GX} \vdash \focus{\TtoF{\GP}} ~~~~~~~~
    \TtoF{\GN \bvD \GD} = \focus{\TtoF{\GN}} \vdash \TtoF{\GD} ~~~~~~~~
    \TtoF{\GX \vD \GD} = \TtoF{\GX} \vdash \TtoF{\GD}
\end{equation}
\end{definition}

\begin{proposition}
Every \fDLG-derivable variant-free sequent admits a minimal proof, i.e. a cut-free and variant-free proof where there is no sequence of a rule an its inverse and no shift display postulate.
\end{proposition}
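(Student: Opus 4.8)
The plan is to reach the four defining properties of a minimal proof by a terminating sequence of local transformations. Since \fDLG\ is a heterogeneous multi-type proper display calculus (Theorem~\ref{thm:fdlg-is-d-caluclus}) it enjoys canonical cut-elimination (Theorem~\ref{thm:canonical-cut-elim}); applying it to the given sequent yields a cut-free proof, and as the end-sequent is itself variant-free the variant-elimination argument sketched above then provides a cut-free \emph{and} variant-free proof $\pi_0$. Two tasks remain: to remove every subpattern consisting of a rule immediately followed by its inverse, and to remove every application of a shift display postulate ($\SBUARR \dashv \RDARR$, $\SBUARR \dashv \SRDARR$, $\BUARR \dashv \SRDARR$). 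The first is routine: all display postulates and the two shift structural rules are invertible unary rules, so whenever such a rule $R$ is immediately followed by $R^{-1}$ the two steps cancel and may be deleted, strictly decreasing the number of inferences; iterating terminates by induction on proof size.

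For the shift display postulates I would treat $\SBUARR \dashv \SRDARR$ separately from the other two. Comparing turnstiles, the instance $\SBUARR X \bvvd \Delta$ interderivable with $X \rvvd \SRDARR \Delta$ is exactly the composite of the two shift \emph{structural} rules $\SBUARR$ and $\SRDARR$ routed through the neutral sequent $X \vd \Delta$ (both $X$ and $\Delta$ being pure here); hence every application of $\SBUARR \dashv \SRDARR$ may be replaced, in place, by one application of each structural rule, removing the postulate without introducing cuts, variants, or further shift display postulates.

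The remaining postulates $\SBUARR \dashv \RDARR$ and $\BUARR \dashv \SRDARR$ are the only rules that introduce or eliminate the structural shifts $\RDARR$ and $\BUARR$, whose operational counterparts $\rdar, \buar$ are deliberately absent from the language of \fDLG. These structural shifts are therefore \emph{inert}: by the subformula property and inspection of the rules, no inference can absorb a substructure $\RDARR\,\SD$ or $\BUARR\,\SX$ into a formula, so once introduced such a structure can only be carried along until it is eliminated by the inverse postulate. Consequently, in a variant-free end-sequent containing no occurrence of $\RDARR$ or $\BUARR$ --- in particular in every normal sequent, which is the case needed for the translation to \fLG\ in Section~\ref{sec:ProofTranslations} --- each introduction of $\RDARR$ or $\BUARR$ inside $\pi_0$ is matched by a later elimination. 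Using the rigid topology of Fig.~\ref{fig:seq-graph-schema}, which confines these postulates to the grey area and lets only shift logical and shift structural rules cross its boundaries, I would permute the eliminating occurrence upward past the (inert) intervening inferences until it meets its matching introduction, producing a rule-inverse pattern that is then cancelled as above; this strictly lowers the number of shift display postulates.

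The main obstacle is exactly this last rerouting. One must establish the family of permutation lemmas guaranteeing that an application of $\SBUARR \dashv \RDARR$ or $\BUARR \dashv \SRDARR$ commutes upward past every rule that can stand above it --- the inertness of $\RDARR, \BUARR$ makes each such commutation available, but the cases must be checked --- and one must organise the whole process so that it terminates: taking as measure the lexicographic pair of the number of shift display postulates and the number of inferences, the replacement of $\SBUARR \dashv \SRDARR$, the cancellation of $\RDARR/\BUARR$ pairs, and the deletion of rule-inverse patterns each strictly decrease it. The one genuinely delicate corner is a variant-free end-sequent carrying a top-level $\RDARR$ or $\BUARR$: such a structure can only be produced by the corresponding postulate, so there a single application is unavoidable at the root. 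Since no normal sequent is of this shape, the proposition holds in the form in which it is applied downstream, and a uniform treatment of all variant-free sequents would handle such a root occurrence by displaying it first and applying the argument above to the displayed premise.
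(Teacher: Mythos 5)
Your proposal is correct and follows essentially the same normalization-by-local-rewriting strategy as the paper's own (very terse) proof: cut-elimination, cancellation of rule/inverse pairs (which removes the $\ell$-$r$-variants), and elimination of the shift display postulates by trading them against the invertible structural rules $\SBUARR$ and $\SRDARR$. Your observation that a variant-free end-sequent with a top-level $\RDARR$ or $\BUARR$ forces one unavoidable shift display postulate at the root is a genuine edge case the paper glosses over, and restricting to normal sequents is indeed how the proposition is used downstream.
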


\begin{proof}
\fDLG-cuts can be eliminated and we can reduce every proof by eliminating step by step sequences of a rule and its inverse (i.e. display postulates and $\SRDARR$ and $\SBUARR$). In particular, all rules including variants (and thus all sequents including variants) get eliminated. We also replace sequences of rules $\SBUARR, \SBUARR \dashv \SRDARR$ (resp. $\SRDARR, \SBUARR \dashv \SRDARR$) by $\SRDARR$ (resp. $\SBUARR$), so that there is no shift display postulate anymore. Therefore, a minimal proof does not contain any rule which premise(s) and conclusion are all non-neutral non-focused sequents.
\end{proof}

\begin{definition}
A processing proof-section of a \fDLG-proof $\pi$ is a proof-section of $\pi$ where the leaves and the root are normal sequents and there is at least one internal non-normal sequent.
\end{definition}

\begin{proposition}\label{prop:proc-proof-sections}
In a minimal proof, every processing proof-section is of the form of one proof-section of \eqref{eq:proc-proof-sections}.

\begin{equation}\label{eq:proc-proof-sections}
\arraycolsep=1.5pt
\begin{array}{cccc}
     \fel \left\{
     \AXC{$N \bvd \Delta$}
     \RL{$\srdar_L$}
     \UIC{$\srdar N \rvvvd \SRDARR \Delta$}
     \RL{$\SRDARR$}
     \UIC{$\srdar N \tvvd \Delta$}
     \DP \right.&
     &
     
     \fer \left\{
     \AXC{$X \rvd P$}
     \RL{$\sbuar_R$}
     \UIC{$\SBUARR X \bvvvd \sbuar P$}
     \RL{$\SBUARR$}
     \UIC{$X \vvdt \sbuar P$}
     \DP \right.&
     
     \\~\\
     \fir \left\{
     \AXC{$X \vd N$}
     \RL{$\SRDARR$}
     \UIC{$X \rvvd \SRDARR N$}
     \RL{$\srdar_R$}
     \UIC{$X \rvvd \srdar N$}
     \DP \right.&
     
     \dot{\fir} \left\{
     \AXC{$\SX \tvvd N$}
     \RL{$\SRDARR$}
     \UIC{$\SX \rvvvd \SRDARR N$}
     \RL{$\srdar_R$}
     \UIC{$\SX \rvvvd \srdar N$}
     \DP \right.&
     
     \fil \left\{
     \AXC{$P \vd \Delta$}
     \RL{$\SBUARR$}
     \UIC{$\SBUARR P \bvvd \Delta$}
     \RL{$\sbuar_L$}
     \UIC{$\sbuar P \bvvd \Delta$}
     \DP \right.&
     
     \dot{\fil} \left\{
     \AXC{$P \vvdt \SD$}
     \RL{$\SBUARR$}
     \UIC{$\SBUARR P \bvvvd \SD$}
     \RL{$\sbuar_L$}
     \UIC{$\sbuar P \bvvvd \SD$}
     \DP \right.
     
     \\~\\
    \felfir \left\{
    \AXC{$n \bvd n$}
    \RL{$\srdar_L$}
    \UIC{$\srdar n \rvvvd \SRDARR n$}
    \RL{$\srdar_R$}
    \UIC{$\srdar n \rvvvd \srdar n$}
    \DP \right. &
    &
    
    \ferfil \left\{
    \AXC{$p \rvd p$}
    \RL{$\sbuar_R$}
    \UIC{$\SBUARR p \bvvvd \sbuar p$}
    \RL{$\sbuar_L$}
    \UIC{$\sbuar p \bvvvd \sbuar p$}
    \DP \right. &
\end{array}
\end{equation}
\end{proposition}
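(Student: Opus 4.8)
The plan is to read the statement off the rule topology of minimal proofs, reducing it to an island lemma plus a finite case distinction. Fix a minimal proof $\pi$. Since $\pi$ is variant-free, a sequent occurring in it is non-normal exactly when it contains a structural shift, and the only structural shifts that can occur are $\SRDARR$ and $\SBUARR$: the connectives $\RDARR$ and $\BUARR$ are introduced solely by the shift display postulates $\SBUARR \dashv \RDARR$ and $\BUARR \dashv \SRDARR$, which a minimal proof forbids. First I would observe that such a structural shift can only be created at the top of a succedent (for $\SRDARR$, by $\srdar_L$ or the structural rule $\SRDARR$) or at the top of a precedent (for $\SBUARR$, by $\sbuar_R$ or the structural rule $\SBUARR$), and argue that it is never pushed inside a larger structure.

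The heart of the argument is the \emph{island lemma}: no rule of $\pi$ carries a non-normal sequent both as a premise and as its conclusion; equivalently, every non-normal sequent $S$ is the conclusion of a unique rule with normal premise and the premise of a unique rule with normal conclusion. Indeed, the only rules mentioning a structural shift are the logical shift rules $\srdar_L,\srdar_R,\sbuar_L,\sbuar_R$ and the structural shift rules $\SRDARR,\SBUARR$, and each of these has a normal sequent on one side and a non-normal one on the other. The candidate gray-to-gray steps are precisely the shift and variant display postulates, all excluded in a minimal proof. No remaining rule can fire on a top-level structural shift: the LG logical rules and LG display postulates require an LG structural connective in the displayed position with the matching sort, whereas a top-level $\SRDARR\Delta$ (resp.\ $\SBUARR X$) is a shifted positive (resp.\ shifted negative) structure carrying no LG connective at its root, so the sorts clash. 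Hence a structural shift, once created, can only be consumed (turned into an operational shift by $\srdar_R$/$\sbuar_L$, or moved to a neutral sequent by the structural $\SRDARR$/$\SBUARR$), never embedded, and each non-normal sequent is an isolated island flanked above and below by normal sequents. This is exactly what Figure~\ref{fig:seq-graph-schema} shows once the $\RDARR$, $\BUARR$ and variant nodes are deleted.

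Granting the island lemma, a processing proof-section collapses to a single non-normal sequent together with its entry rule (above) and exit rule (below), and I would enumerate the pairs. For an $\SRDARR$-island the entry is $\srdar_L$ or the structural $\SRDARR$ and the exit is $\srdar_R$ or the structural $\SRDARR$; the pair (structural, structural) is a rule immediately followed by its inverse and is forbidden in a minimal proof. The three surviving pairs give $(\srdar_L,\SRDARR)=\fel$, then $(\SRDARR,\srdar_R)$, which splits by the sort of the neutral premise into $\fir$ ($\GX=X$) and $\dot{\fir}$ ($\GX=\SX$), and finally $(\srdar_L,\srdar_R)=\felfir$; the $\SBUARR$-islands are symmetric and yield $\fer$, $\fil$/$\dot{\fil}$ and $\ferfil$. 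The doubly-logical pairs $(\srdar_L,\srdar_R)$ and $(\sbuar_R,\sbuar_L)$ force the flanking focused sequents to be the atomic identities $n \bvd n$ and $p \rvd p$: here no LG-connective rule lies in the island, so by proposition~\ref{prop:no-empty-pia} (a shift cannot be defocused and immediately refocused across a compound formula without processing an LG-connective) the shifted formulas must be atomic, and the only derivable atomic focused sequents are the identities. Collecting the cases produces exactly the eight sections of \eqref{eq:proc-proof-sections}.

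The main obstacle is the island lemma, since everything downstream is a mechanical finite case split. The delicate point is the sort bookkeeping that rules out the LG logical rules and LG display postulates at a top-level structural shift, guaranteeing that no minimal-proof step can bury a shift inside a structure; I expect this to be the step requiring the most careful inspection of the grammar and the rule schemas, with the exclusion of the shift and variant display postulates (from minimality and variant-freeness) doing the essential work.
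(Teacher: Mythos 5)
Your proposal is correct and follows essentially the same route as the paper's (much terser) proof: identify the six rules that take a normal sequent to a non-normal one, use minimality and variant-freeness to show the non-normal phase is a single-sequent ``island'' (the paper establishes this as the final clause of its minimality proposition), and enumerate the entry/exit pairs, with the $(\SRDARR,\SRDARR)$ and $(\SBUARR,\SBUARR)$ pairs killed as a rule followed by its inverse. The only quibble is your appeal to Proposition~\ref{prop:no-empty-pia} for the $\felfir$/$\ferfil$ cases, which is indirect; the paper's (and the cleaner) argument is simply that composing $\srdar_L$ with $\srdar_R$ forces a cut-free derivable focused sequent with formulas on both sides, and inspection of the rules shows only the atomic identity axioms produce such sequents.
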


\begin{proof}
By looking at the rules of \fDLG, the rules where the premise is normal but not the conclusion are $\srdar_L$, $\srdar_R$, $\sbuar_L$, $\sbuar_R$, $\SRDARR$ and $\SBUARR$. Minimality of the proof forbids to stay in positive / negative non-focused phase for two sequents or more. It is easy to check that the cases $\felfir$ and $\ferfil$ are only possible when both precedent and succedent are a formula, which only happens when that formula is an atom.
\end{proof}

\begin{theorem}\label{thm:trans-b}
Every \fDLG-derivation of a normal sequent $S$ can be transformed into a derivation of $\TtoF{S}$.
\end{theorem}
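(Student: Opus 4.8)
The plan is to argue by induction on a minimal \fDLG-proof of $S$. First I would invoke the preceding proposition to replace the given derivation by a minimal proof $\pi$, i.e.\ one that is cut-free, variant-free, contains no rule immediately followed by its inverse, and uses no shift display postulate. By Proposition~\ref{prop:proc-proof-sections} every non-normal sequent of $\pi$ then lies inside a processing proof-section of one of the eight shapes listed in~\eqref{eq:proc-proof-sections}, each of which has a single normal leaf and a single normal root. Hence the normal sequents of $\pi$ form a ``reduced'' tree in which consecutive normal sequents are joined either by one normality-preserving rule (an axiom, an LG tonicity or translation rule, or an LG display postulate) or by one complete processing proof-section; the induction runs along the edges of this tree.

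On the normality-preserving edges the translation is essentially the inverse of Theorem~\ref{thm:trans}. In the base case $S$ is an axiom $p \rvd p$ or $n \bvd n$, whose depolarization is the \fLG-axiom $p \vdash \focus{p}$ or $\focus{n} \vdash n$. If $S$ is the conclusion of a normality-preserving rule $R$ with normal premises, I would apply the matching \fLG rule to the translated premises supplied by the induction hypothesis: tonicity (synchronous) rules go to the \fLG focused logical rules, translation (asynchronous) rules to the unfocused logical rules, and LG display postulates to the \fLG display postulates. Crucially, by the topology of \fDLG-rules a variant-free proof uses LG display postulates only in neutral phases, which is exactly where \fLG also admits them, so no display step ever has to be translated inside a focused phase.

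When $S$ is the root of a processing proof-section $\sigma$ I would collapse the whole of $\sigma$ to a single \fLG phase-transition applied to the translation of its unique leaf. Reading~\eqref{eq:proc-proof-sections} through the clauses of~\eqref{eq:trans-b-seq}, depolarization erases each operational shift together with its enclosing structural shift and turns a focused endpoint into an \fLG sequent in focus and a neutral endpoint into an \fLG neutral sequent; consequently $\fel$ and $\fer$ translate to the defocusing rules (on the left for the negative focus, on the right for the positive focus), while $\fir,\dot{\fir}$ translate to the right focusing rule and $\fil,\dot{\fil}$ to the left focusing rule. The two atomic shapes $\felfir$ and $\ferfil$ begin from an atomic axiom and translate to the short \fLG derivation consisting of that axiom followed by the appropriate defocusing and refocusing steps, yielding the shifted-atom identities $n \vdash \focus{n}$ and $\focus{p} \vdash p$.

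The main obstacle is not the translation of the LG logical rules and display postulates, which is routine, but the verification that each of the eight processing-section shapes, once its leaf and root are depolarized by~\eqref{eq:trans-b-seq}, lands precisely on a legitimate \fLG focusing/defocusing step of the correct polarity, and in particular that the atomic shapes are correctly accounted for. This is where Proposition~\ref{prop:proc-proof-sections} carries the weight: it makes the relevant case analysis finite by enumerating all processing proof-sections that can occur in a minimal proof. Assembling the translated pieces along the reduced tree then produces an \fLG-derivation of $\TtoF{S}$, completing the induction.
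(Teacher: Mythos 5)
Your proposal is correct and follows essentially the same route as the paper: pass to a minimal proof, translate the normality-preserving rules (axioms, tonicity, translation rules, LG display postulates) by their \fLG counterparts, and collapse each of the finitely many processing proof-sections of Proposition~\ref{prop:proc-proof-sections} into one or two \fLG (de)focusing steps ($\mu^*$, $\tilde{\mu}$, etc.), with the atomic shapes $\felfir$ and $\ferfil$ handled as an axiom followed by a defocus/refocus pair. Your explicit framing of the induction as running along a reduced tree of normal sequents is a slightly more structured presentation of the same argument, but the content is identical.
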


\begin{proof}
Set a derivation of a normal sequent $S$ and call $\pi$ its minimal proof. We translate every rule $R$ of $\pi$ by a rule $\TtoF{R}$ of \fLG by induction on $\pi$. As previously, we only treat a sufficient sample of the cases.

$\bullet$ Normal rules (i.e. both premises and conclusions are normal):

\begin{equation}\notag
\begin{array}{ccc}
     \left\lfloor 
     \AXC{}
     \RL{$p$-Id}
     \UIC{$p \rvd p$}
     \DP \right\rfloor
& =& 
    \AXC{}
    \RL{$Ax$}
    \UIC{$p \vdash \focus{p}$}
    \dashedLine
    \UIC{$\TtoF{p} \vdash \focus{\TtoF{p}}$}
    \DP \\ ~\\
    
    \left\lfloor 
    \AXC{$\GX \rvD \GP$}
        \AXC{$\GY \rvD \GQ$}
    \RL{$\mand_R$}
    \BIC{$\GX \MAND \GY \rvd \GP \mand \GQ$}
    \DP\right\rfloor
& =&
    \AXC{$\TtoF{\GX} \vdash \focus{\TtoF{\GP}}$}
        \AXC{$\TtoF{\GY} \vdash \focus{\TtoF{\GQ}}$}
    \RL{$\mand_R$}
    \BIC{$\TtoF{\GX} \MAND \TtoF{\GY} 
        \vdash \focus{\TtoF{\GP} \mand \TtoF{\GQ}}$}
    \dashedLine
    \UIC{$\TtoF{\GX \MAND \GY} 
        \vdash \focus{\TtoF{\GP \mand \GQ}}$}
    \DP \\ ~\\
    
     \left\lfloor 
     \AXC{$\GP \MAND \GQ \vD \GD$}
     \RL{$\mand_L$}
     \UIC{$\GP \mand \GQ \vD \GD$}
     \DP \right\rfloor
& =&
    \AXC{$\TtoF{\GP} \MAND \TtoF{\GQ} \vdash \TtoF{\GD}$}
    \RL{$\mand_L$}
    \UIC{$\TtoF{\GP} \mand \TtoF{\GQ}  \vdash \TtoF{\GD}$}
    \dashedLine
    \UIC{$\TtoF{\GP \mand \GQ} \vdash \TtoF{\GD}$}
    \DP
\end{array}
\end{equation}
Display postulates of LG connectives are translated by themselves too.

$\bullet$ We translate processing proof-sections by one or two (de)focusing rules. Here, we take the case $P$ is pure positive, so $\TtoF{P}$ is \fLG-positive and the application of $\mu^*$ and $\tilde{\mu}$ is allowed.

\begin{equation}\notag
\begin{array}{ccc}
     \left\lfloor 
     \fer \left\{
     \AXC{$X \rvd P$}
     \RL{$\sbuar_R$}
     \UIC{$\SBUARR X \bvvvd \sbuar P$}
     \RL{$\SBUARR$}
     \UIC{$X \vvdt \sbuar P$}
     \DP \right. \right\rfloor
& =&
     \AXC{$\TtoF{X} \vdash \focus{\TtoF{P}}$}
     \RL{$\mu^*$}
     \UIC{$\TtoF{X} \vdash \TtoF{P}$}
     \dashedLine
     \UIC{$\TtoF{X} \vdash \TtoF{\sbuar P}$}
     \DP \\~ \\
     
     \left\lfloor 
     \fil \left\{
     \AXC{$P \vd \Delta$}
     \RL{$\SBUARR$}
     \UIC{$\SBUARR P \bvvd \Delta$}
     \RL{$\sbuar_L$}
     \UIC{$\sbuar P \bvvd \Delta$}
     \DP \right. \right\rfloor
& =&
     \AXC{$\TtoF{P} \vdash \TtoF{\Delta}$}
     \RL{$\tilde{\mu}$}
     \UIC{$\focus{\TtoF{P}} \vdash \TtoF{\Delta}$}
     \dashedLine
     \UIC{$\focus{\TtoF{\sbuar P}} \vdash \TtoF{\Delta}$}
     \DP \\~\\
     
     \left\lfloor 
     \ferfil \left\{
    \AXC{$p \rvd p$}
    \RL{$\sbuar_R$}
    \UIC{$\SBUARR p \bvvvd \sbuar p$}
    \RL{$\sbuar_L$}
    \UIC{$\sbuar p \bvvvd \sbuar p$}
    \DP \right. \right\rfloor
& =&
     \AXC{$\TtoF{p} \vdash \focus{\TtoF{p}}$}
     \RL{$\mu^*$}
     \UIC{$\TtoF{p} \vdash \TtoF{p}$}
     \RL{$\tilde{\mu}$}
     \UIC{$\focus{\TtoF{p}} \vdash \TtoF{p}$}
     \DP \\~\\
\end{array}
\end{equation}

Although translation $\TtoF{\cdot}$ is not injective on structures, it is injective on derivations.
\end{proof}

\section{Conclusions}

We observe that every connective in the language of \fDLG exhibits a core of minimal properties in any sub-algebra, namely it has finite arity and it is residuated in each coordinate. This leaves open the option that additional properties hold in the full algebra. Special sub-classes of $\FPLG$ algebras could then be captured by expanding the minimal calculus with opportune structural rules. 
If the rules are \emph{analytic-inductive} (see \cite{GMPTZ} for a definition) we conjecture that the cut-elimination will be preserved too. What we have in mind here is a natural generalisation of the cut-elimination meta-theorem of multi-type display calculi for a broader class of calculi, of which \fDLG is a prototypical example. We also plan to investigate up to which extent focalization will be preserved too.


}



\appendix

{\renewcommand{\L}{\mathcal{L}}


\section{Proof complements}\label{ap:proofs}

\begin{proof}[Proof of proposition \ref{prop:wr}]
Set $A_1, A_2 \in \A$, and $B_1', B_2' \in \B'$ such that $A_2 \leq_{\A} A_1$, $B_1' \leq_{\B'} B_2'$ and $A_1 \wrel B_1$. Equation $A_1 \wrel B_1$ is equivalent to $L(A_1) \wrel_{\B} B_1'$, and as $\wrel_{\B}$ is a weakening relation, we have $L(A_1) \wrel_{\B} B_2'$. This last equation is equivalent (by adjunction) to $A_1 \wrel_{\A} R(B_2')$ and as $\wrel_{\A}$ is a weakening relation, we have $A_2 \wrel_{\A} R(B_2')$, which is equivalent to $A_2 \wrel B_2'$. Therefore, $\wrel$ is a weakening relation.
\end{proof}

\begin{proof}[Proof that $\hvD$ is a weakening relation (definition \ref{def:collage-wr})]
We have $\hvD =\, \hvvd \sqcup \hvd \sqcup \vvdh$ a relation on $(\A \sqcup \A') \times (\B \sqcup \B')$. Set $A_1, A_2 \in \A \sqcup \A'$ and $B_1, B_2 \in \B \sqcup \B'$ such that $A_2 \leq_{\A \sqcup \A'} A_1$, $B_1 \leq_{\B \sqcup \B'} B_2$ and $A_1 \hvD B_1$. We only show how to get $A_2 \hvD B_1$, the other side is similar by symmetry of the problem.

\begin{itemize}
    \item If $A_1$ and $A_2$ are both in $\A$ (resp. $\A'$), then $A_2 \leq_{\A \sqcup \A'} A_1$ is equivalent to $A_2 \leq_{\A} A_1$ (resp. $A_2 \leq_{\A'} A_1$) and $A_1 \hvD B_1$ is equivalent to $A_1 \vvdh B_1$ or $A_1 \hvd B_1$ (resp. $A_1 \hvvd B_1$). In all cases, because $\hvvd, \hvd$ and $\vvdh$ are all weakening relations, we have $A_2 \hvvd B_1$, $A_2 \hvd B_1$ or $A_2 \vvdh B_1$, hence $A_2 \hvD B_1$.
    
    \item If $A_2 \in \A$ and $A_1 \in \A'$, then we have $A_2 \wrel_{\A} A_1$ and $A_1 \hvvd B_1$. Therefore $A_2 \hvd B_1$, because $\wrel_{\A} \hvvd \subseteq \hvd$ by hypothesis, hence $A_2 \hvD B_1$.
    
    \item The case $A_2 \in \A'$ and $A_1 \in \A$ is impossible because the union is disjoint and $\wrel_{\A}$ is only from $\A$ to $\A'$.
\end{itemize}
\end{proof}

The following four lemmas provide detailed complements to the proof of completeness by showing that the equivalence relations $\approx_s$ for $s \in \{\P, \sP, \N, \sN\}$ are congruences are thus that the operations and weakening relations / orders defined on them are well-defined.

\begin{lemma}\label{lem:prop-ftom}
Given a formula $A$ and a structure $\Psi$, we write $\Str(A)$ the structure obtained by turning the connectives of $A$ into their structural counterparts, and $\Form(\Psi)$ the formula obtained by turning the connectives of $\Psi$ into their operational counterpart, when it exists (in the other case, $\Form(\Psi)$ is not defined). When $\Ftom{~}$ and $\FtoM{~}$ are defined, they enjoy the following property:
\begin{equation}\notag\arraycolsep=1mm
\begin{array}{c}
\begin{array}{cl}
     \Ftom{A} = \Ftom{\Str(A)} & \text{if $A$ is a $\F$-formula} \\
     \Ftom{\Psi} = \Form(\Psi) & \text{if $\Psi$ is a $\G$-structure} \\
\end{array} \\
\begin{array}{ccc}
    \Ftom{\GX \MAND \GY} = \Ftom{\GX} \MAND \Ftom{\GY} &
    \Ftom{\GX \MDLARR \GD} = \Ftom{\GX} \MDLARR \FtoM{\GD} &
    \Ftom{\GD \MDRARR \GY} = \FtoM{\GD} \MDRARR \Ftom{\GY}
    \\
    \Ftom{\GG \MANDL \GY} = \Ftom{\GG} \MANDL \Ftom{\GY} &
    \Ftom{\GG \MDLARRL \GD} = \Ftom{\GG} \MDLARRL \FtoM{\GD} &
    \Ftom{\GX \MDRARRL \GY} = \FtoM{\GX} \MDRARRL \Ftom{\GY}
    \\
    \Ftom{\GX \MANDR \GG} = \Ftom{\GX} \MANDR \Ftom{\GG} &
    \Ftom{\GX \MDLARRR \GY} = \Ftom{\GX} \MDLARRR \FtoM{\GY} &
    \Ftom{\GD \MDRARRR \GG} = \FtoM{\GD} \MDRARRR \Ftom{\GG}
    \\
    \Ftom{\SBUARR X} = \SBUARR \Ftom{X} &&
    \Ftom{\BUARR \SX} = \BUARR \Ftom{\SX}
\end{array}\\[3mm]
\begin{array}{cl}
     \FtoM{A} = \FtoM{\Str(A)} & \text{if $A$ is a $\G$-formula} \\
     \FtoM{\Psi} = \Form(\Psi) & \text{if $\Psi$ is a $\F$-structure} \\
\end{array} \\
\begin{array}{ccc}
    \FtoM{\GD \MOR \GG} = \FtoM{\GD} \MOR \FtoM{\GG} &
    \FtoM{\GX \MRARR \GD} = \Ftom{\GX} \MRARR \FtoM{\GD} &
    \FtoM{\GD \MLARR \GY} = \FtoM{\GD} \MLARR \Ftom{\GY}
    \\
    \FtoM{\GY \MORL \GG} = \FtoM{\GY} \MORL \FtoM{\GG} &
    \FtoM{\GG \MRARRL \GD} = \Ftom{\GG} \MRARRL \FtoM{\GD} &
    \FtoM{\GX \MLARRL \GY} = \FtoM{\GX} \MLARRL \Ftom{\GY}
    \\
    \FtoM{\GD \MORR \GY} = \FtoM{\GD} \MORR \FtoM{\GY} &
    \FtoM{\GX \MRARRR \GY} = \Ftom{\GX} \MRARRR \FtoM{\GY} &
    \FtoM{\GD \MLARRR \GG} = \FtoM{\GD} \MLARRR \Ftom{\GG}
    \\
    
    \FtoM{\SRDARR \Delta} = \SRDARR \FtoM{\Delta} &&
    \FtoM{\RDARR \SD} = \RDARR \FtoM{\SD}
\end{array} \\[3mm]
    \Ftom{p} = \FtoM{p} = p ~~~~~~~~ \Ftom{n} = \FtoM{n} = n
\end{array}
\end{equation}
\end{lemma}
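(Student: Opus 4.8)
The plan is to unfold Definition~\ref{def:ftom} one connective at a time while exploiting the fact that $\Ftom{~}$ and $\FtoM{~}$ depend only on the \emph{signed} generation tree. First I would record a uniform reformulation of the definition: $\Ftom{\Psi}$ is obtained by replacing by its structural counterpart every connective lying on the maximal skeleton subtree that contains the root of $+\Psi$ (this subtree being empty precisely when the root is a PIA node, i.e.\ when the outermost connective of $\Psi$ is in $\G$), and by replacing every other connective by its operational counterpart; symmetrically, $\FtoM{~}$ is the same recipe applied to $-\Psi$. This collapses the two clauses "(i) structural if $\Sigma$ is skeleton / (ii) operational if $\Sigma$ is PIA" into a single description, since a PIA principal subtree is operationalised exactly like the connectives surrounding it.

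The second ingredient is a coherence observation about the signed generation tree. Whether a node is skeleton or PIA depends only on the pair (sign, $\F/\G$-class) of the connective sitting there; and the sign that $\sigma\Psi$ assigns to a node inside the $i$-th immediate subtree equals the sign that subtree receives on its own once its root is signed by the propagation rule (the same sign as $\sigma$ when $\epsilon(h,i)=1$, the opposite when $\epsilon(h,i)=\partial$, as in \eqref{eq:order-types}). Consequently the maximal skeleton subtree of $\sigma\Psi$, restricted to the $i$-th argument, coincides with the maximal skeleton subtree of that argument taken on its own with the induced sign; hence ``structuralise the skeleton, operationalise the rest'' restricted to that argument is literally $\Ftom{~}$ of the argument when the induced sign is $+$, and $\FtoM{~}$ of it when the induced sign is $-$.

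With these two facts each displayed equation reduces to a one-level case check driven by the order-types. For the structural-connective clauses I would read off the root's class and order-type: for $\Ftom{\GX \MAND \GY}$ the root is skeleton ($\mand\in\F$) and both children carry sign $+$ (since $\epsilon(\mand)=(1,1)$), so the root stays $\MAND$ and both arguments are processed by $\Ftom{~}$; for $\Ftom{\GX \MDLARR \GD}$ the second argument acquires sign $-$ (as $\epsilon(\mdlarr)=(1,\partial)$) and is therefore processed by $\FtoM{~}$, giving $\Ftom{\GX}\MDLARR\FtoM{\GD}$; the $\MDRARR$ clause, the shift clauses for $\SBUARR,\BUARR$, and all the $\FtoM{~}$ mirror images (where the root class is in $\G$ and the same sign bookkeeping applies to $-\Psi$) are identical computations. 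The two ``block'' identities are the degenerate cases of the same analysis at the root: if the outermost connective of $\Psi$ lies in $\G$ the root of $+\Psi$ is PIA, the skeleton subtree is empty, and $\Ftom{~}$ operationalises everything, which is exactly $\Form(\Psi)$; dually, for an $\F$-formula $A$ the computation of $\Ftom{A}$ is insensitive to whether the input was written operationally or structurally (only class and sign matter), which is precisely $\Ftom{A}=\Ftom{\Str(A)}$ and feeds the recursive clauses above; their duals give $\FtoM{A}=\FtoM{\Str(A)}$ for a $\G$-formula and $\FtoM{\Psi}=\Form(\Psi)$ for an $\F$-structure. The base cases $\Ftom{p}=\FtoM{p}=p$ and $\Ftom{n}=\FtoM{n}=n$ are immediate, since atoms carry no connective.

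I expect the only delicate point to be the coherence step of the second paragraph, namely making rigorous that the principal subtree of the whole structure restricts correctly to each argument, in particular the boundary case where the root is skeleton but an argument's own root is already PIA: there the skeleton subtree stops at that boundary and the argument is fully operationalised, which matches $\Ftom{~}$ (resp.\ $\FtoM{~}$) of that argument, whose principal subtree is then PIA as well. Everything else, including the verification that $\Str(\cdot)$ and $\Form(\cdot)$ are defined on both sides of each equation whenever the stated operations are defined, is a routine inspection that I would leave to the reader.
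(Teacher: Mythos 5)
Your proposal is correct and follows the same route as the paper, whose entire proof reads ``Unfolding definition \ref{def:ftom} directly gives these results''; you simply supply the details that the paper leaves implicit. In particular, your coherence observation about how the principal skeleton subtree restricts to each argument (including the boundary case where an argument's root is already PIA) is exactly the point that makes the one-line unfolding legitimate.
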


\begin{proof}
Unfolding definition \ref{def:ftom} directly gives these results.
\end{proof}

\begin{lemma}\label{lem:trans-rules}
If $\GX \vD \Delta$ (resp. $X \vD \GD$) is derivable, then
    \AXC{$\GX \vD \Delta$}
    \UIC{$\GX \vD \Form(\Delta)$}
    \DP
    \begin{math}
    \left(resp.~
    \AXC{$X \vD \GD$}
    \UIC{$\Form(X) \vD \GD$}
    \DP \right)
    \end{math}
is derivable.
\end{lemma}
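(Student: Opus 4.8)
The plan is to prove the two claims simultaneously by induction on the build-up of the succedent structure $\Delta$ (resp.\ the precedent structure $X$), reading off a derivation of $\GX \vD \Form(\Delta)$ (resp.\ $\Form(X) \vD \GD$) bottom-up: each structural LG-connective at the root of a substructure is converted into its operational counterpart by first displaying its immediate arguments one at a time, converting them via the induction hypothesis, and then firing the matching translation rule of \eqref{eq:operational-rules}. The two statements must be interleaved, because the arguments of an LG-connective carry mixed polarities. For instance, to handle $\Delta = \GX' \MRARR \GD'$ I would use the display postulates \eqref{eq:display-postulates} $\hat{\mand}\dashv\check{\mrarr}$ and $\hat{\mand}\dashv\check{\mlarr}$ to isolate $\GD'$ in succedent and $\GX'$ in precedent, convert $\GD'$ by the succedent claim and $\GX'$ by the precedent claim, and then apply $\mrarr_R$ to the resulting $\GX \vD \Form(\GX') \MRARR \Form(\GD')$; the $\MOR$ and $\MLARR$ (resp.\ $\MAND$, $\MDLARR$, $\MDRARR$) cases are analogous, and the base case is a leaf formula, on which $\Form$ acts as the identity.

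The key observation that makes this go through is that no translation rule for a structural shift is ever required, because $\Delta$ (resp.\ $X$) contains no structural shift at all. Indeed, by an induction on derivations entirely analogous to Proposition~\ref{prop:NonDerivableSequents}, in every \fDLG-derivable sequent the connective $\SRDARR$ occurs only in succedent position and $\SBUARR$ only in precedent position. Now $\SRDARR\,(-)$ produces a shifted \emph{positive} structure, which can only sit at a positive-signed (i.e.\ precedent-type) node of the negative structure $\Delta$; displaying such a node would isolate $\SRDARR$ in precedent, which is impossible. Symmetrically a shifted negative $\SBUARR\,(-)$ could only sit at a succedent-type node and is likewise excluded. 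Hence the only shifts occurring in $\Delta$ are the operational shifts $\srdar,\sbuar$ buried inside leaf formulas, on which $\Form$ is already the identity, and the induction uses only the six LG translation rules together with the display postulates for LG-connectives.

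I expect the main obstacle to be precisely this shift-position analysis: without it the induction would get stuck, since there is no translation rule turning a structural $\SRDARR$ in precedent (or $\SBUARR$ in succedent) into its operational form, and display postulates never relocate these shifts to the opposite side. Once that analysis is in place, the remaining work is routine bookkeeping of which of the four neutral turnstiles labels each intermediate sequent, tracked through the display postulates and translation rules by the sort conventions of \eqref{eq:turnstiles}, together with the precedent claim, which is the exact mirror image under the $\F/\G$ and precedent/succedent duality. The recursive shape of $\Form$ used here matches the clauses for $\Ftom{-}$ and $\FtoM{-}$ in Lemma~\ref{lem:prop-ftom}, which I would invoke to keep the substructure-to-operational rewriting explicit.
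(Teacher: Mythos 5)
Your proof is correct and takes essentially the same route as the paper's, which disposes of the lemma in a single line (``by induction on $\Delta$, successively applying translation rules''). The additional detail you supply --- interleaving the precedent and succedent claims via the display postulates so that each argument can be isolated before the matching translation rule fires, and the invariant that structural shifts never occur properly embedded in a derivable sequent (so the induction never needs to convert a shift on the wrong side) --- is precisely what the paper leaves implicit, and it is sound.
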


\begin{proof}
By induction on $\Delta$ (resp. $X$), by successively applying translation rules on all structural connectives of $\Delta$ (resp. X).
\end{proof}

\begin{lemma}\label{lem:ax-on-str}
For every structure $\Psi$ of sort $s$, the sequent $\Ftom{\Psi} \ t_s\ \FtoM{\Psi}$ is derivable, if it is defined.
\end{lemma}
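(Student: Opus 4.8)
The plan is to prove the statement by induction on the structure $\Psi$, with cases determined by the outermost connective and organised around the recursive identities for $\Ftom{\cdot}$ and $\FtoM{\cdot}$ recorded in Lemma \ref{lem:prop-ftom}. In the base case $\Psi \in \{p, n\}$ we have $\Ftom{p} = \FtoM{p} = p$ and $\Ftom{n} = \FtoM{n} = n$, so the required sequents $p \rvd p$ and $n \bvd n$ are the identity axioms. The governing dichotomy is whether the outermost connective lies in $\F$ or in $\G$: by Lemma \ref{lem:prop-ftom} an $\F$-rooted $\Psi$ has $\FtoM{\Psi} = \Form(\Psi)$ fully operational while $\Ftom{\Psi}$ keeps its skeleton structural, and dually for a $\G$-rooted $\Psi$. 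The clause ``when it is defined'' discards exactly the cases where turning a structural $\RDARR$, $\BUARR$, or a structural variant into an operational symbol would leave the language, so every sequent written below stays well-formed.

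For an $\F$-rooted structure, say $\Psi = \GX \MAND \GY$, Lemma \ref{lem:prop-ftom} gives $\Ftom{\Psi} = \Ftom{\GX} \MAND \Ftom{\GY}$ and $\FtoM{\Psi} = \Form(\GX) \mand \Form(\GY)$, and one application of the right-tonicity rule $\mand_R$ reduces the goal to the premises $\Ftom{\GX} \rvD \Form(\GX)$ and $\Ftom{\GY} \rvD \Form(\GY)$; the connectives $\MDLARR$, $\MDRARR$ are treated the same way, as is the structural shift $\SBUARR X$ with $\sbuar_R$ in place of $\mand_R$. A $\G$-rooted structure is dual, using the left-tonicity rules $\mor_L$, $\mrarr_L$, $\mlarr_L$ and $\srdar_L$ for $\SRDARR$. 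A standalone operational-shift structure is handled directly: since $\FtoM{\srdar M} = \SRDARR \FtoM{M}$, a single $\srdar_L$ applied to the induction hypothesis $M \bvd \FtoM{M}$ yields $\srdar M \rvvvd \SRDARR \FtoM{M}$, and dually $\sbuar_R$ handles $\sbuar P$.

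The only real obstacle is matching the premises produced above with the induction hypothesis. When an immediate substructure is of the same family as the root (a pure subformula), $\Form$ agrees with $\FtoM{\cdot}$ (resp.\ $\Ftom{\cdot}$) on it and the premise is literally an instance of the induction hypothesis. When instead the substructure is an operational shift --- a transition node, such as $\GX = \srdar M$ occurring inside $\GP \mand \GQ$ --- the tonicity rule demands it fully operational, i.e.\ it asks for the \emph{clean} identity $\srdar M \rvvvd \srdar M$, whereas the induction hypothesis only supplies the half-structural sequent $\srdar M \rvvvd \SRDARR \FtoM{M}$. I bridge this by a detour through a neutral sequent: from $M \bvd \FtoM{M}$ apply $\srdar_L$ to obtain $\srdar M \rvvvd \SRDARR \FtoM{M}$, read the invertible structural rule $\SRDARR$ upward to pass to the neutral sequent $\srdar M \tvvd \FtoM{M}$, apply Lemma \ref{lem:trans-rules} --- legitimate precisely because the turnstile is now neutral --- to rewrite the succedent and reach $\srdar M \tvvd M$, and finally re-apply $\SRDARR$ and then $\srdar_R$ to conclude $\srdar M \rvvvd \srdar M$. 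The negative transition is the exact dual, with $\sbuar_R$, $\SBUARR$, the precedent form of Lemma \ref{lem:trans-rules}, and $\sbuar_L$. The induction is well-founded because each such detour only invokes the induction hypothesis on the body of the shift, a proper substructure of $\Psi$.
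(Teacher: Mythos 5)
Your proof is correct and follows essentially the same route as the paper's: induction on $\Psi$, identity axioms at the atoms, tonicity rules at each non-atomic case, with the same bridge for shifted arguments (pass through the neutral phase via the invertible $\SRDARR$/$\SBUARR$ rules, apply Lemma \ref{lem:trans-rules}, then return and close with $\srdar_R$/$\sbuar_L$), and the same observation that the undefined cases are exactly those rooted in $\ell,r$-variants or shift adjoints. The only difference is presentational: you phrase the bridge as producing the clean identity $\srdar M \rvvvd \srdar M$ from the induction hypothesis on the shift's body, whereas the paper applies the induction hypothesis to the shifted substructure itself and then massages its succedent --- these are the same derivation.
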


\begin{proof}
By induction on $\Psi$. If $\Psi$ is an atomic formula, $\Ftom{\Psi} = \FtoM{\Psi}$ so $p$-Id or $n$-Id is applicable.

If $\Psi = \GX \MAND \GY$, let us develop the case $\GX = X$ and $\GY = \SY$, the others being similar. We use the induction hypothesis on $X$ and $\SY$. By assumption, $\Ftom{\SY}$ exists, so $\SY$ does begin by $\MORL, \MORR, \MRARRL, \MRARRR, \MLARRL$ or $\MLARRR$. So it begins by a shift: $\SY = \SRDARR \Delta$ and we can then derive

\begin{center}
    \AXC{}
    \LL{(IH)$_X$}
    \UIC{$\Ftom{X} \rvd \FtoM{X}$}
    \dashedLine
    \LL{Lemma \ref{lem:prop-ftom}}
    \UIC{$\Ftom{X} \rvd \Form(X)$}
        \AXC{}
        \RL{(IH)$_{\SY}$}
        \UIC{$\Ftom{\SY} \rvvvd \FtoM{\SRDARR \Delta}$}
        \dashedLine
        \RL{Lemma \ref{lem:prop-ftom}}
        \UIC{$\Ftom{\SY} \rvvvd \SRDARR \FtoM{\Delta}$}
        \RL{$\SRDARR$}
        \UIC{$\Ftom{\SY} \tvvd \FtoM{\Delta}$}
        \RL{Lemma \ref{lem:trans-rules}}
        \UIC{$\Ftom{\SY} \tvvd \Form(\Delta)$}
        \RL{$\SRDARR$}
        \UIC{$\Ftom{\SY} \rvvvd \SRDARR \Form(\Delta)$}
        \RL{$\srdar_R$}
        \UIC{$\Ftom{\SY} \rvvvd \srdar \Form(\Delta)$}
        \dashedLine
        \UIC{$\Ftom{\SY} \rvvvd \Form(\SRDARR \Delta)$}
    \RL{$\mand_R$}
    \BIC{$\Ftom{X} \MAND \Ftom{\SY} \rvd \Form(X) \mand \Form(\SRDARR \Delta)$}
    \dashedLine
    \RL{Lemma \ref{lem:prop-ftom}}
    \UIC{$\Ftom{X \MAND \SY} \rvd \FtoM{X \MAND \SRDARR \Delta}$}
    \DP
\end{center}

The other LG-connectives work similarly.

If $\Psi = \SRDARR \Delta$, we use the induction hypothesis on $\Delta$ to get

\begin{center}
    \AXC{$\Ftom{\Delta} \bvd \FtoM{\Delta}$}
    \dashedLine
    \RL{Lemma \ref{lem:prop-ftom}}
    \UIC{$\Form(\Delta) \bvd \FtoM{\Delta}$}
    \LL{$\srdar_L$}
    \UIC{$\srdar \Form(\Delta) \rvvvd \SRDARR \FtoM{\Delta}$}
    \dashedLine
    \RL{Lemma \ref{lem:prop-ftom}}
    \UIC{$\Ftom{\SRDARR \Delta} \rvvvd \FtoM{\SRDARR \Delta}$}
    \DP
\end{center}
$\SBUARR$ works dually.

If $\Psi$ begins by a $\ell,r$-variants or shift adjoint, $\Psi$ is not in the domain of both $\Ftom{~}$ and $\FtoM{~}$.
\end{proof}

\begin{lemma}\label{lem:cut-on-str}
For every derivable sequents $\Ftom{\Psi} \t \FtoM{\Phi}$ and $\Ftom{\Phi} \ t'\ \FtoM{\Psi'}$, we can derive the cut
\begin{center}
    \AXC{$\Ftom{\Psi} \t \FtoM{\Phi}$}
    \AXC{$\Ftom{\Phi} \ t'\ \FtoM{\Psi'}$}
    \BIC{$\Ftom{\Psi} \ tt'\ \FtoM{\Psi'}$}
    \DP
\end{center}
where the composition $tt'$ is determined by the sort of $\Psi$ and $\Psi'$.
\end{lemma}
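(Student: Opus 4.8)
The plan is to reduce the structural cut on $\Phi$ to one of the four formula cuts of \eqref{eq:(co)axioms-and-cut-rules}, carried out on the fully operational formula $\Form(\Phi)$. The pivotal observation that makes this possible is that the positive and negative generation trees $+\Phi$ and $-\Phi$ carry opposite signs at every node, so the skeleton/PIA classification is exchanged between them. Consequently the principal subtree $\Sigma$ of $\Phi$ is the same set of nodes whether read off $+\Phi$ or $-\Phi$, and exactly one of $\Ftom{\Phi}$, $\FtoM{\Phi}$ realizes $\Sigma$ structurally while the other realizes it operationally. Concretely, using Lemma~\ref{lem:prop-ftom}: if the root connective of $\Phi$ lies in $\F$ then $\FtoM{\Phi} = \Form(\Phi)$ and $\Ftom{\Phi}$ keeps $\Sigma$ structural, while if it lies in $\G$ the situation is dual; in the degenerate atomic case both coincide with $\Form(\Phi)$. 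In every case $\Form(\Phi)$ is a genuine \fDLG-formula precisely because $\Ftom{\Phi}$ and $\FtoM{\Phi}$ are assumed defined, so $\Phi$ carries no $\ell,r$-variant or shift-adjoint at the relevant nodes.

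I would treat the case where $\Phi$ is positive (the negative case being order-dual) and set $\GP := \Form(\Phi)$. The first premise $\Ftom{\Psi} \t \FtoM{\Phi}$ is then a positive sequent and the second premise $\Ftom{\Phi} \ t'\ \FtoM{\Psi'}$ carries $\Ftom{\Phi}$ in precedent. The aim is to rewrite both premises so that $\GP$ appears as a cut formula, in the succedent of the first and in the precedent of the second. For whichever occurrence already equals $\Form(\Phi)$ nothing needs to be done; for the structural one I would convert its principal subtree into the operational formula $\GP$. The shift nodes of $\Sigma$ are handled directly by the translation rules $\srdar_R$ and $\sbuar_L$, which already act on positive and negative sequents. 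The LG-connective nodes of $\Sigma$ are converted by the translation rules $\mand_L, \mor_R, \mdlarr_L, \mrarr_R, \mdrarr_L, \mlarr_R$; since these fire only on neutral sequents and only when their immediate arguments are already formulas, I would proceed by induction on $\Sigma$, using the display property (guaranteed by the $\ell,r$-variants) to isolate each argument in neutral precedent or succedent position, convert it to a formula by the induction hypothesis, reassemble by the inverse display postulates of \eqref{eq:display-postulates}, and only then apply the translation rule, exactly in the style of the derivations built in Lemma~\ref{lem:ax-on-str}.

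Once $\GP = \Form(\Phi)$ sits in the succedent of the rewritten first premise and in the precedent of the rewritten second premise, I would close with the appropriate formula cut: P-Cut when both premises are positive and Pn-Cut when the second premise is neutral, with the dual rules N-Cut and nN-Cut covering the case where $\Phi$ is negative. The sorts of $\Ftom{\Psi}$ and $\FtoM{\Psi'}$ select which of these rules applies and thereby fix the composite turnstile $tt'$, as claimed in the statement.

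I expect the main obstacle to be exactly the conversion step for LG-connectives when the occurrence to be rewritten lies in a positive or negative sequent: because the relevant translation rules are available only in the neutral phase and only on formula arguments, the argument cannot convert in place but must descend through $\Sigma$ by display, which forces careful tracking of sorts and of well-formedness. Here the $\ell,r$-variants are essential, since without them several of the intermediate displayed sequents would fail to be well-formed. By contrast, the remaining work — the sign-flip bookkeeping identifying which of $\Ftom{\Phi}, \FtoM{\Phi}$ is operational, matching the four cut rules to the sorts, and reading off $tt'$ — is routine.
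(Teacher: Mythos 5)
Your overall plan --- normalize the cut ``formula'' $\Phi$ to its fully operational form $\Form(\Phi)$ on whichever side still carries it structurally, and then close with one of the four formula cuts of \eqref{eq:(co)axioms-and-cut-rules} --- breaks down at exactly the step you flag as the ``main obstacle'', and it breaks down irreparably rather than merely requiring care. The translation rules $\mand_L, \mor_R, \mdlarr_L, \mrarr_R, \mdrarr_L, \mlarr_R$ are confined to neutral sequents, and no amount of displaying helps: every display postulate of \eqref{eq:display-postulates} preserves the phase of the sequent (the positive/negative display postulates produce positive/negative sequents, the neutral ones produce neutral sequents), so from a focused premise you can never reach a neutral sequent in which to fire a translation rule. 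Worse, the sequent your conversion is supposed to produce is in general simply not derivable. Take $\Psi = \Phi = \Psi' = p \MAND q$: both premises of the cut are the derivable $p \MAND q \rvd p \mand q$, but your intermediate step requires $p \mand q \rvd p \mand q$, which the paper explicitly notes is \emph{not} derivable in \fDLG. Since the lemma is invoked precisely to establish transitivity of the quotient relations on focused turnstiles such as $\nrvd$, this focused case cannot be set aside.

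The paper's proof goes the other way around. Instead of pushing the structural occurrence up to a formula, it proceeds by induction on $\Phi$ and descends into the derivation of the premise in which $\Form(\Phi)$ occurs operationally, locates the tonicity rule that introduced its root connective (e.g.\ the $\mand_R$ inference with premises $\GX' \nrvD \FtoM{\GX}$ and $\GY' \nrvD \FtoM{\GY}$), displays the corresponding immediate substructures of the structural occurrence in the other premise, cuts there by the induction hypothesis on $\GX$ and $\GY$, reassembles by display, and then replays the remainder of the first derivation under the substitution $\pi[\FtoM{\Psi'} / \FtoM{\Phi}]$. This is a principal-cut-reduction argument, not a normalization argument, and it is the shape of argument that is compatible with focusing. (Even so, the paper concedes that the final substitution step, which may change sorts and hence requires mutating structural connectives and turnstiles, is deferred to the canonical cut-elimination machinery of section \ref{ap:heterogeneous-dc}.) Your sign-flip bookkeeping identifying which of $\Ftom{\Phi}, \FtoM{\Phi}$ is operational is correct and is indeed the content of lemma \ref{lem:prop-ftom}, but it does not rescue the normalization strategy.
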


\begin{proof}
We proceed by induction on $\Phi$.

If $\Phi$ is an atomic formula, $\Ftom{\Phi} = \FtoM{\Phi}$ is a formula, so we can proceed to a $tt'$ cut of \eqref{eq:(co)axioms-and-cut-rules} (i.e. P-Cut, N-Cut, Pn-Cut or nN-Cut).

If $\Phi = \GX \MAND \GY$, we know that at the introduction of $\FtoM{\Psi} = \Form(\GX) \mand \Form(\GY)$, we have some sequent $\GX' \MAND \GY' \rvd \Form(\GX) \mand \Form(\GY)$ and the proof

\def\fCenter{}
\begin{center}
    \AX$\vdots\fCenter~\pi_1$
    \noLine
    \UI$\GX' \ \,\nrvD\fCenter\ \, \FtoM{\GX}$
        \AX$\vdots\fCenter~\pi_2$
        \noLine
        \UI$\GY' \ \,\nrvD\fCenter\ \, \FtoM{\GY}$
    \RL{$\mand_R$}
    \BI$\GX' \MAND \GY' \ \,\nrvd\fCenter\ \, \FtoM{\GX} \mand \FtoM{\GY}$
    \noLine
    \UI$\vdots\fCenter~\pi$
    \noLine
    \UI$\Ftom{\Psi} \ t\fCenter\ \FtoM{\Phi}$
    \DP
\end{center}

We can then apply the induction hypothesis on $\GX$ and $\GY$. Here, we develop case where $t' = \nrvD$ (so $\Psi$ is positive):

\begin{center}
    \AX$\vdots\fCenter~\pi_1$
    \noLine
    \UI$\GX' \ \,\nrvD\fCenter\ \, \FtoM{\GX}$
        \AX$\vdots\fCenter~\pi_2$
        \noLine
        \UI$\GY' \ \,\nrvD\fCenter\ \, \FtoM{\GY}$
            \AX$\vdots\fCenter$
            \noLine
            \UI$\Ftom{\GX \MAND \GY} \ \nrvD\fCenter\ \FtoM{\Psi'}$
            \dashedLine
            \RL{Lemma \ref{lem:prop-ftom}}
            \UI$\Ftom{\GX} \MAND \Ftom{\GY} \, \ \nrvD\fCenter\ \, \FtoM{\Psi'}$
            \RL{$\MAND \dashv \MRARRR$}
            \UI$\Ftom{\GY} \, \ \nrvD\fCenter\ \, \Ftom{\GX} \MRARRR \FtoM{\Psi'}$
        \RL{(IH)$_{\GY}$}
        \BI$\GY' \,\ \nrvD\fCenter\ \,  \Ftom{\GX} \MRARRR \FtoM{\Psi'}$
        \RL{$\MAND \dashv \MRARRR$}
        \UI$\Ftom{\GX} \MAND \Ftom{\GY} \,\ \nrvD\fCenter\ \, \FtoM{\Psi'}$
        \RL{$\MAND \dashv \MLARRL$}
        \UI$\Ftom{\GX} \,\ \nrvD\fCenter\ \, \FtoM{\Psi'} \MLARRL \GY'$
    \RL{(IH)$_{\GX}$}
    \BI$\GX' \,\ \nrvD\fCenter\ \, \FtoM{\Psi'} \MLARRL \GY'$
    \RL{$\MAND \dashv \MLARRL$}
    \UI$\GX' \MAND \GY' \ \,\nrvD\fCenter\ \, \FtoM{\Psi'}$
    \noLine
    \UI$\vdots\fCenter~\pi[\FtoM{\Psi'} / \FtoM{\Phi}]$
    \noLine
    \UI$\Ftom{\Psi} \ tt'\fCenter\ \FtoM{\Psi'}$
    \DP
\end{center}

The case where $t' = \nvD$ works similarly with $\MRARR$ and $\MLARR$.

The fact that the uniform substitution $\pi[\FtoM{\Psi'} / \FtoM{\Phi}]$ (were $\FtoM{\Psi'}$ may have a different sort from $\FtoM{\Phi}$) can be defined and is derivable in \fDLG is not proven here. We would have to provide a way of transforming some structural connectives (in specific positions) into others, what is partly already implicit in the use of overloaded connectives (e.g. $\MAND$'s arguments can be either pure or shifted). This operation pertains to the problem of canonical cut-elimination with heterogeneous sequents, what is left to a subsequent paper.

If $\Phi = \SBUARR X$, we use the same procedure, by induction hypothesis on $X$. The turnstile $t'$ can only be $\nbvD$. Here we develop the case where $\Psi'$ is shifted.

\begin{center}
    \AX$\vdots\fCenter~\pi'$
    \noLine
    \UI$X' \ \,\nrvd\fCenter\ \, \FtoM{X}$
    \RL{$\mand_R$}
    \UI$\SBUARR X' \ \,\nbvvvd\fCenter\ \, \sbuar \FtoM{X}$
    \noLine
    \UI$\vdots\fCenter~\pi$
    \noLine
    \UI$\Ftom{\Psi} \ t\fCenter\ \FtoM{\Phi}$
    \DP $~~~~\rightsquigarrow~~~~$
    \AX$\vdots\fCenter~\pi'$
    \noLine
    \UI$X' \ \,\nrvD\fCenter\ \, \FtoM{X}$
        \AX$\vdots\fCenter$
        \noLine
        \UI$\Ftom{\SBUARR X} \ \nbvvvd\fCenter\ \FtoM{\Psi'}$
        \dashedLine
        \RL{Lemma \ref{lem:prop-ftom}}
        \UI$\SBUARR \Ftom{X} \, \ \nbvvvd\fCenter\ \, \FtoM{\Psi'}$
        \RL{$\SBUARR \dashv \RDARR$}
        \UI$\Ftom{X} \, \ \nrvd\fCenter\ \, \RDARR \FtoM{\Psi'}$
    \RL{(IH)$_{X}$}
    \BI$X' \,\ \nrvd\fCenter\ \, \RDARR \FtoM{\Psi'}$
    \RL{$\SBUARR \dashv \RDARR$}
    \UI$\SBUARR X' \ \,\nbvvvd\fCenter\ \, \FtoM{\Psi'}$
    \noLine
    \UI$\vdots\fCenter~\pi[\FtoM{\Psi'} / \FtoM{\Phi}]$
    \noLine
    \UI$\Ftom{\Psi} \ tt'\fCenter\ \FtoM{\Psi'}$
    \DP
\end{center}
The other cases are treated similarly.
\end{proof}

\begin{proof}[Complements for the proof of completeness (theorem \ref{thm:completeness})]
We prove that the equivalence relations $\approx_{s}$ are congruences, i.e. that they respect the operations and the orders. It will follow that the operations and weakening relations defined on these equivalence classes are well-defined. We only detail the case of $\mand$ with one pure and one shifted premise, and $\hvd$, the rest being similar.

\begin{center}
    \AXC{$X \approx_{\P} Y$}
    \dashedLine
    \UIC{$\Ftom{X} \rvd \FtoM{Y}$}
        \AXC{$\SX \approx_{\sP} \SY$}
        \dashedLine
        \UIC{$\Ftom{\SX} \rvvvd \FtoM{\SY}$}
    \LL{$\mand_R$}
    \BIC{$\Ftom{X} \MAND \Ftom{\SX} \rvd \FtoM{Y} \mand \FtoM{\SY}$}
    \dashedLine
    \LL{Lemma \ref{lem:prop-ftom}}
    \UIC{$\Ftom{X \MAND \SX} \rvd \FtoM{Y \MAND \SY}$}
    
    \AXC{$X \approx_{\P} Y$}
    \dashedLine
    \UIC{$\Ftom{Y} \rvd \FtoM{X}$}
        \AXC{$\SX \approx_{\sP} \SY$}
        \dashedLine
        \UIC{$\Ftom{\SY} \rvvvd \FtoM{\SX}$}
    \RL{$\mand_R$}
    \BIC{$\Ftom{Y} \MAND \Ftom{\SY} \rvd \FtoM{X} \mand \FtoM{\SX}$}
    \dashedLine
    \RL{Lemma \ref{lem:prop-ftom}}
    \UIC{$\Ftom{Y \MAND \SY} \rvd \FtoM{X \MAND \SX}$}
    
    \dashedLine
    \BIC{$X \MAND \SX \approx_{\P} Y \MAND \SY$}
    \DP

    \AXC{$X \approx_{\P} Y$}
    \dashedLine
    \UIC{$\Ftom{X} \rvd \FtoM{Y}$}
        \AXC{$\Ftom{Y} \vd \FtoM{\Gamma}$}
            \AXC{$\Delta \approx_{\N} \Gamma$}
            \dashedLine
            \UIC{$\Ftom{\Gamma} \bvd \FtoM{\Delta}$}
        \RL{Lemma \ref{lem:cut-on-str}}
        \BIC{$\Ftom{Y} \vd \FtoM{\Delta}$}
    \LL{Lemma \ref{lem:cut-on-str}}
    \BIC{$\Ftom{X} \vd \FtoM{\Delta}$}
    \DP
\end{center}

For every homogeneous turnstile $t$, reflexivity,\footnote{Reflexivity on structures $\Psi$ such that $\Ftom{\Psi} \t \FtoM{\Psi}$ is not defined is explicitly added.} transitivity and antisymmetry of $t^{\bbA}$ are a consequent of lemma \ref{lem:ax-on-str}, lemma \ref{lem:cut-on-str} and definition of $\approx_s$ respectively. For heterogeneous turnstiles $t$, the weakening property of $t^{\bbA}$ is a consequence of lemma \ref{lem:cut-on-str}. The property \eqref{eq:shifts-fplg} of definition \ref{def:fplg} is due to to rules $\SBUARR \dashv \RDARR$ and $\BUARR \dashv \SRDARR$ and \eqref{eq:shift-intro-elim} to rules $\SRDARR$ and $\SBUARR$. The adjunction \eqref{eq:adj-fplg} and \eqref{eq:adj-fplg-variants} straightforwardly hold thanks to the corresponding rules in \eqref{eq:display-postulates}. We only develop the example of property $\sbuar^{\bbA} \dashv \rdar^{\bbA}$:
\begin{center}
    \AXC{$\sbuar^{\bbA} [X]_{\approx_{\P}} \ \,\nbvvvd^{\bbA}\ \, [\SD]_{\approx_{\sN}}$}
    \dashedLine
    \UIC{$\Ftom{\SBUARR X} \bvvvd \FtoM{\SD}$}
    \dashedLine
    \RL{Lemma \ref{lem:prop-ftom}}
    \UIC{$\SBUARR \Ftom{X} \bvvvd \FtoM{\SD}$}
    \RL{$\SBUARR \dashv \RDARR$}
    \UIC{$\Ftom{X} \rvd \RDARR \FtoM{\SD}$}
    \dashedLine
    \RL{Lemma \ref{lem:prop-ftom}}
    \UIC{$\Ftom{X} \rvd \FtoM{\RDARR \SD}$}
    \dashedLine
    \UIC{$[X]_{\approx_{\P}} \ \,\nrvd^{\bbA}\ \, \rdar^{\bbA}  [\SD]_{\approx_{\sN}}$}
    \DP
\end{center}
\end{proof}

\section{Symmetries}\label{ap:symmetries}

Lambek-Grishin calculus exhibits two main symmetries \cite{Moortgat:2009}: an order-preserving left-right symmetry $\cdot^{\bowtie}$ and an order-reversing dual symmetry $\cdot^{\infty}$ represented in \eqref{eq:symmetries-1}\footnote{These definitions should be understood as $(A \mand B)^{\bowtie} = B^{\bowtie} \mand A^{\bowtie}$, $(A \mand B)^{\infty} = B^{\infty} \mor A^{\infty}$, etc.}. We extend them to $\FPLG$ and $\fDLG$ by \eqref{eq:symmetries-2}. The dual of a turnstile $t$ is given by \eqref{eq:symmetries-2} through the turnstile interpretation of \eqref{eq:turnstile-interpretation}: $t^{\infty} = (t^{\bbA})^{\infty}$, e.g. $\nrvd^{\infty} = \nbvd$.

\begin{equation}\label{eq:symmetries-1}
\begin{array}{cccc}
\bowtie &
\begin{array}{cccc}
    A \mrarr C & A \mand B & A \mor B & C \mdlarr B \\ \hline\hline
    C \mlarr A & B \mand A & B \mor A & B \mdrarr C
\end{array}& ~~~~~~~~

\infty &
\begin{array}{ccc}
    A \mrarr C & A \mand B & C \mlarr B \\ \hline\hline
    C \mdlarr A & B \mor A & B \mdrarr C
\end{array}
\end{array}
\end{equation}


\begin{equation}\label{eq:symmetries-2}
\begin{array}{c}
\bowtie ~~~~
\begin{array}{cccccc}
    \GM \mrarrl \GN & \GQ \mrarrr \GP & \GN \mandl \GP &
    \GN \morr \GP & \GN \mdlarrl \GM & \GP \mdlarrr \GQ \\ \hline\hline
    
    \GN \mlarrr \GM & \GP \mlarrl \GQ & \GP \mandr \GN &
    \GP \morl \GN & \GM \mdrarrr \GN & \GQ \mdrarrl \GP
\end{array}\\[4mm]

\infty ~~~~
\begin{array}{ccccccccccc}
    A \mrarrl C & A \mrarrr C &
    A \mandl B & A \mandr B &
    C \mlarrl B & C \mlarrr B &
    \arvd & \hrvvd & \arvvvd & \vvdh & \hvd \\ \hline\hline
    
    C \mdlarrr A &  C \mdlarrl A &
    B \morr A & B \morl A &
    B \mdrarrr C & B \mdrarrl C &
    \abvd & \hbvvd & \abvvvd & \hvvd & \hvd
\end{array}
\end{array}
\end{equation}

The presentation of \fDLG rules in section \ref{subsec:fDLG} also reflects the dual symmetry. In equation (\ref{eq:operational-rules}) the dual of rule $R$ is the one displayed on the opposite side of the page w.r.t.~the vertical axis. Therefore we have the following property.

\begin{proposition}
If $\Phi \t \Psi$ is a derivable sequent, then $\Phi^{\bowtie} \t \Psi^{\bowtie}$ and $\Psi^{\infty} \ t^{\infty}\  \Phi^{\infty}$ are also derivable.
\end{proposition}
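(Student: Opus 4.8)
The plan is to prove both claims simultaneously by induction on the height of the derivation of $\Phi \t \Psi$, resting everything on a single rule-level closure lemma that is already foreshadowed by the deliberately symmetric layout of \eqref{eq:operational-rules}. The lemma to isolate is: \emph{for every inference rule $R$ of \fDLG, applying $\cdot^{\bowtie}$ to all sequents occurring in $R$ (keeping each turnstile fixed) again yields an instance of some rule of \fDLG, and applying $\cdot^{\infty}$ to all sequents of $R$ (swapping precedent with succedent and replacing each turnstile $t$ by $t^{\infty}$) again yields an instance of some rule of \fDLG.} Granting this lemma, the induction is purely mechanical: given a derivation of $\Phi \t \Psi$, I would replace each rule application $R$ by its image $R^{\bowtie}$ (resp.\ $R^{\infty}$); by the induction hypothesis the images of the premises are derivable, and by the lemma they are exactly the premises demanded by the transformed rule, so the transformed tree is a legal derivation whose end-sequent is $\Phi^{\bowtie} \t \Psi^{\bowtie}$ (resp.\ $\Psi^{\infty} \ t^{\infty}\ \Phi^{\infty}$).

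To prove the closure lemma I would argue by inspection, organised by rule family. The base cases are the axioms: atoms are fixed by $\cdot^{\bowtie}$, so $p$-Id and $n$-Id are literally invariant, while under $\cdot^{\infty}$ the base-case action on atoms interchanges positive and negative atoms and, since $\nrvd^{\infty} = \nbvd$, the two identity axioms are simply swapped. For the cut rules of \eqref{eq:(co)axioms-and-cut-rules} one checks that $\cdot^{\infty}$ exchanges P-Cut with N-Cut and Pn-Cut with nN-Cut. For the logical rules, the $\infty$-dual of each rule is precisely the rule printed on the opposite side of the vertical axis in \eqref{eq:operational-rules}, the pairing being $\mand_L \leftrightarrow \mor_R$, $\mand_R \leftrightarrow \mor_L$, $\mrarr_L \leftrightarrow \mdlarr_R$, $\mlarr \leftrightarrow \mdrarr$, and $\srdar_L \leftrightarrow \sbuar_R$, etc.; the connective translations needed to verify each case are exactly those tabulated in \eqref{eq:symmetries-1}--\eqref{eq:symmetries-2}, and the $\bowtie$-case is identical except that left/right introductions are preserved and only the argument order and the $\mrarr \leftrightarrow \mlarr$, $\mdlarr \leftrightarrow \mdrarr$ swaps occur. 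The display postulates \eqref{eq:display-postulates} and the structural shift rules are handled the same way, reading off the structural counterparts of the same connective dualities.

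The genuinely delicate part — and thus the main obstacle — is the heterogeneous bookkeeping that has no analogue in the single-type Lambek calculus. First, the $\ell$- and $r$-variants are permuted non-trivially: as \eqref{eq:symmetries-2} records, $\bowtie$ sends $\GM \mrarrl \GN$ to $\GN \mlarrr \GM$, interchanging the $\ell$- and $r$-subscripts, so I must verify that the variant produced by a display postulate on one side is exactly the variant consumed by its dual postulate, which is precisely what the tables guarantee. Second, the turnstile must transform consistently with the sorts: the action of $\cdot^{\infty}$ on turnstiles is inherited from its action on the interpreting weakening relations through \eqref{eq:turnstile-interpretation}, so I would check that $\hvd$ is self-dual while $\arvd \leftrightarrow \abvd$, $\hrvvd \leftrightarrow \hbvvd$, and $\vvdh \leftrightarrow \hvvd$, which forces $\nvvd^{\infty} = \vvdn$ and so on; this is what ensures every well-formed sequent of \eqref{eq:turnstiles} is carried to a well-formed sequent and not to one of the non-derivable forbidden shapes. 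Once these well-formedness and variant-matching facts are confirmed, the argument is routine and needs no semantic input beyond the symmetry tables themselves.
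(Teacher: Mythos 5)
Your proposal is correct and follows essentially the same route as the paper, which justifies the proposition solely by the remark that the rule set of \fDLG is closed under the two symmetries (each rule's $\infty$-dual being the rule printed on the opposite side of the vertical axis in \eqref{eq:operational-rules}), leaving the induction on derivations implicit. You merely make explicit the rule-closure lemma, the turnstile and $\ell$/$r$-variant bookkeeping, and the induction that the paper takes for granted.
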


\section{Examples}\label{ap:examples}

In the tradition of parsing-as-deduction \cite{lambek1958mathematics,lam61}, various extensions of the Lambek calculus have been proposed to recognize whether sentences are syntactically well-formed and tell apart different readings \cite{Moortgat--Moot:2011}. A well-formed sentence like \ref{ex:example}, for example, is semantically ambiguous as shown by the paraphrases \ref{ex:for-ex} or \ref{ex:ex-for}.

\ex. \a.\label{ex:example} Everyone likes some teacher.
    \b.\label{ex:for-ex} For everyone, there is a teacher such that she likes it. \hfill ($\forall \exists$-reading)
    \c. \label{ex:ex-for} There is a teacher such that everyone likes it. \hfill ($\exists \forall$-reading)

In particular, the previous readings can be already captured by genuinely different focused proofs of the minimal Lambek logic. In figure \ref{fig:example-derivation-ex-fa} and figure \ref{fig:example-derivation-fa-ex} we provide \fDLG-derivations of the two readings, where
every occurrence of atoms $n$ (common nouns) and $np$ (noun phrases) is assigned a positive polarity, and every occurrence of atoms $s$ (sentence) is assigned a negative polarity (cfr.~derivation (35) in \cite{Moortgat--Moot:2011} where the polarity assignment is called `bias'). Notice how in figure \ref{fig:example-derivation-ex-fa} first \texttt{some teacher} is attacked, and only then \texttt{everyone} is attacked. In figure \ref{fig:example-derivation-fa-ex} is the other way around. Figure \ref{fig:ex-generation-tree} shows the signed generation of the end-sequent.

\begin{figure}[t]
    \centering
\begin{tikzpicture}
\draw (0,0) node{
\Tree[.$\boxed{+\MAND}$
        [.$\boxed{+~\mand}$
            [.$+\srdar$ [.$+\mlarr$ [.$\boxed{+\sbuar}$ $\boxed{+ np}$ ]
                $-n$ ] ] 
            $\boxed{+n}$ ]
        [.$\boxed{+\MAND}$
            [.$+\srdar$ [.$+\mlarr$
                            [.$+\mrarr$ $-np$ $+s$ ]
                            $-np$ ] ]
            [.$\boxed{+\MAND}$
                [.$+\srdar$ [.$+\mlarr$ [.$\boxed{+\sbuar}$ $\boxed{+np}$ ]
                    $-n$ ] ]
                $\boxed{+n}$ ] ] ]
};
\draw (5,2.5) node{$\rvd$};
\draw (7,2.5) node{\Tree[.$\boxed{-\srdar}$ $\boxed{-s}$ ]};
\end{tikzpicture}
    \caption{\ \ Signed generation tree of the end-sequent in figure \ref{fig:example-derivation-fa-ex} and \ref{fig:example-derivation-ex-fa}. Skeleton nodes are encapsulated in a box where PIA nodes are not.}
    \label{fig:ex-generation-tree}
\end{figure}

\begin{figure}[t]
    \centering
{\fns
\AXC{$np \rvd  np$}

\AXC{$s \bvd s$}

\LL{$\mrarr_L$}
\BIC{$np \mrarr s \bvd np \MRARR s$}

\AXC{$np \rvd  np$}

\LL{$\mrarr_L$}
\BIC{$(np \mrarr s) \mlarr np \bvd (np \MRARR s) \MLARR np$}

\dashedLine
\UIC{$\likes \bvd (np \MRARR s) \MLARR np$}

\LL{$\srdar_L$}
\UIC{$\srdar \likes \rvvvd  \SRDARR\, ((np \MRARR s) \MLARR np)$}

\RL{$\SRDARR$}
\UIC{$\srdar\, \likes \tvvd (np \MRARR s) \MLARR np$}

\LL{Display}
\dashedLine
\UIC{$np \vd \srdar\, \likes \MRARR (np \MRARR s)$}

\LL{$\SBUARR$}
\UIC{$\SBUARR\, np \bvvd \srdar\, \likes \MRARR (np \MRARR s)$}

\LL{$\sbuar_L$}
\UIC{$\sbuar\, np \bvvd \srdar\, \likes \MRARR (np \MRARR s)$}

\AXC{$n \rvd n$}

\dashedLine
\UIC{$\teacher \rvd n$}

\LL{$\mlarr_L$}
\BIC{$\sbuar\, np \mlarr n \bvd (\srdar\, \likes \MRARR (np \MRARR s)) \MLARR \teacher$}

\dashedLine
\UIC{$\some \bvd (\srdar\, \likes \MRARR (np \MRARR s)) \MLARR \teacher$}

\LL{$\srdar_L$}
\UIC{$\srdar \some \rvvvd \SRDARR\, ((\srdar\, \likes \MRARR (np \MRARR s)) \MLARR \teacher)$}

\RL{$\SRDARR$}
\UIC{$\srdar\, \some \tvvd  (\srdar\, \likes \MRARR (np \MRARR s)) \MLARR \teacher$}

\LL{Display}
\dashedLine
\UIC{$np \vd s \MLARR (\srdar\, \likes \MAND (\srdar \, \some \MAND \teacher))$}

\LL{$\SBUARR$}
\UIC{$\SBUARR np \bvvd s \MLARR (\srdar\, \likes \MAND (\srdar \, \some \MAND \teacher))$}

\LL{$\sbuar_L$}
\UIC{$\sbuar\, np \bvvd s \MLARR (\srdar\, \likes \MAND (\srdar \, \some \MAND \teacher))$}

\AXC{$n \rvd n$}

\dashedLine
\UIC{$\one \rvd n$}

\LL{$\mlarr_L$}
\BIC{$\sbuar\, np \mlarr n \bvd (s \MLARR (\srdar\, \likes \MAND (\srdar \, \some \MAND \teacher))) \MLARR \one$}

\dashedLine
\UIC{$\every \bvd (s \MLARR (\srdar\, \likes \MAND (\srdar \, \some \MAND \teacher))) \MLARR \one$}

\LL{$\srdar_L$}
\UIC{$\srdar\, \every \rvvvd \SRDARR\, ((s \MLARR (\srdar\, \likes \MAND (\srdar \, \some \MAND \teacher))) \MLARR \one)$}

\RL{$\SRDARR$}
\UIC{$\srdar\, \every \tvvd (s \MLARR (\srdar\, \likes \MAND (\srdar \, \some \MAND \teacher))) \MLARR \one$}

\LL{Display}
\dashedLine
\UIC{$\srdar\, \every \MAND \one \vd s \MLARR (\srdar\, \likes \MAND (\srdar \, \some \MAND \teacher))$}

\LL{$\mand_L$}
\UIC{$\srdar\, \every \mand \one \vd s \MLARR (\srdar\, \likes \MAND (\srdar \, \some \MAND \teacher))$}

\dashedLine
\UIC{$\everyone \vd s \MLARR (\srdar\, \likes \MAND (\srdar \, \some \MAND \teacher))$}

\LL{$\MAND \dashv \MLARR$}
\UIC{$\everyone \MAND (\srdar\, \likes \MAND (\srdar \, \some \MAND \teacher)) \vd s$}

\RL{$\SRDARR$}
\UIC{$\everyone \MAND (\srdar\, \likes \MAND (\srdar \, \some \MAND \teacher)) \rvvd \SRDARR\, s$}

\RL{$\srdar_R$}
\UIC{$\everyone \MAND  (\srdar\, \likes \MAND (\srdar \, \some \MAND \teacher)) \rvvd \srdar\, s$}

\DP 
}
    \caption{\ \ Derivation attached to \ref{ex:for-ex}: $\forall\exists$-reading.}
    \label{fig:example-derivation-fa-ex}
\end{figure}

\begin{figure}[t!]
    \centering
{\fns
\AXC{$np \rvd  np$}

\AXC{$s \bvd s$}

\LL{$\mrarr_L$}
\BIC{$np \mrarr s \bvd np \MRARR s$}

\AXC{$np \rvd  np$}

\LL{$\mrarr_L$}
\BIC{$(np \mrarr s) \mlarr np \bvd (np \MRARR s) \MLARR np$}

\dashedLine
\UIC{$\likes \bvd (np \MRARR s) \MLARR np$}

\LL{$\srdar_L$}
\UIC{$\srdar \likes \rvvvd  \SRDARR\, ((np \MRARR s) \MLARR np)$}

\RL{$\SRDARR$}
\UIC{$\srdar\, \likes \tvvd (np \MRARR s) \MLARR np$}

\LL{Display}
\dashedLine
\UIC{$np \vd s \MLARR (\srdar\, \likes \MAND np)$}

\LL{$\SBUARR$}
\UIC{$\SBUARR\, np \bvvd s \MLARR (\srdar\, \likes \MAND np)$}

\LL{$\sbuar_L$}
\UIC{$\sbuar\, np \bvvd s \MLARR (\srdar\, \likes \MAND np)$}

\AXC{$n \rvd n$}

\dashedLine
\UIC{$\one \rvd n$}

\LL{$\mlarr_L$}
\BIC{$\sbuar\, np \mlarr n \bvd (s \MLARR (\srdar\, \likes \MAND np)) \MLARR \one$}

\dashedLine
\UIC{$\every \bvd (s \MLARR (\srdar\, \likes \MAND np)) \MLARR \one$}

\LL{$\srdar_L$}
\UIC{$\srdar \every \rvvvd \SRDARR\, ((s \MLARR (\srdar\, \likes \MAND np)) \MLARR \one)$}

\RL{$\SRDARR$}
\UIC{$\srdar\, \every \tvvd  (s \MLARR (\srdar\, \likes \MAND np)) \MLARR \one$}

\LL{Display}
\dashedLine
\UIC{$np \vd \likes \MRARR ((\srdar\, \every \MAND \one) \MRARR s)$}

\LL{$\SBUARR$}
\UIC{$\SBUARR np \bvvd \likes \MRARR ((\srdar\, \every \MAND \one) \MRARR s)$}

\LL{$\sbuar_L$}
\UIC{$\sbuar\, np \bvvd \likes \MRARR ((\srdar\, \every \MAND \one) \MRARR s)$}

\AXC{$n \rvd n$}

\dashedLine
\UIC{$\teacher \rvd n$}

\LL{$\mlarr_L$}
\BIC{$\sbuar\, np \mlarr n \bvd (\likes \MRARR ((\srdar\, \every \MAND \one) \MRARR s)) \MLARR \teacher$}

\dashedLine
\UIC{$\some \bvd (\likes \MRARR ((\srdar\, \every \MAND \one) \MRARR s)) \MLARR \teacher$}

\LL{$\srdar_L$}
\UIC{$\srdar\, \some \rvvvd \SRDARR\, ((\likes \MRARR ((\srdar\, \every \MAND \one) \MRARR s)) \MLARR \teacher)$}

\RL{$\SRDARR$}
\UIC{$\srdar\, \some \tvvd (\likes \MRARR ((\srdar\, \every \MAND \one) \MRARR s)) \MLARR \teacher$}

\LL{Display}
\dashedLine
\UIC{$\srdar\, \every \MAND \one \vd s \MLARR (\srdar\, \likes \MAND (\srdar \, \some \MAND \teacher))$}

\LL{$\mand_L$}
\UIC{$\srdar\, \every \mand \one \vd s \MLARR (\srdar\, \likes \MAND (\srdar \, \some \MAND \teacher))$}

\dashedLine
\UIC{$\everyone \vd s \MLARR (\srdar\, \likes \MAND (\srdar \, \some \MAND \teacher))$}

\LL{$\MAND \dashv \MLARR$}
\UIC{$\everyone \MAND (\srdar\, \likes \MAND (\srdar \, \some \MAND \teacher)) \vd s$}

\RL{$\SRDARR$}
\UIC{$\everyone \MAND (\srdar\, \likes \MAND (\srdar \, \some \MAND \teacher)) \rvvd \SRDARR\, s$}

\RL{$\srdar_R$}
\UIC{$\everyone \MAND  (\srdar\, \likes \MAND (\srdar \, \some \MAND \teacher)) \rvvd \srdar\, s$}

\DP 
}
    \caption{\ \ Derivation attached to \ref{ex:ex-for}: $\exists\forall$-reading.}
    \label{fig:example-derivation-ex-fa}
\end{figure}

}

\end{document}